\definecolor{newgrey}{rgb}{0.95,0.95,0.95}
\theoremstyle{plain}
\newtheorem{theorem}{Theorem}[section]
\newtheorem{lemma}[theorem]{Lemma}
\newtheorem{corollary}[theorem]{Corollary}
\newtheorem{prop}[theorem]{Proposition}
\theoremstyle{remark}
\newtheorem{remark}[theorem]{Remark}
\newtheorem*{note*}{Note}
\newtheorem*{remark*}{Remark}
\newtheorem*{example*}{Example}
\theoremstyle{definition}
\newtheorem*{definition*}{Definition}
\newtheorem*{hypothesis*}{Hypothesis}
\newtheorem*{assumptions*}{Assumptions}
\newtheorem{definition}[theorem]{Definition}
\newcommand{\Z}{\mathbb{Z}}
\newcommand{\R}{\mathbb{R}}
\newcommand{\Q}{\mathbb{Q}}
\newcommand{\C}{\mathbb{C}}
\newcommand{\Aut}{\mathrm{Aut}}
\newcommand{\Gal}{\mathrm{Gal}}
\newcommand{\Tr}{\mathrm{Tr}}
\newcommand{\Norm}{\mathrm{Norm}}
\newcommand{\rank}{\mathrm{rank}}
\newcommand{\Hom}{\mathrm{Hom}}
\newcommand{\tr}{\mathrm{tr}}
\newcommand{\tors}{\mathrm{tors}}
\newcommand{\Irr}{\mathrm{Irr}}
\numberwithin{equation}{section}
\newcommand{\MM}{\mathcal{M}}
\newcommand{\OO}{\mathcal{O}}
\newcommand{\Cl}{\operatorname{Cl}}
\newcommand{\z}{\mathbb{Z}}
\newcommand{\q}{\mathbb{Q}}
\newcommand{\ot}{\otimes}
\newcommand{\rk}{\mathrm{rk}}
\newcommand{\one}{\mathbbm{1}}
\newcommand{\opp}{\mathrm{op}}
\newcommand{\lMM}[1]{\leftindex^{\MM}{#1}}
\newcommand{\lGamma}[1]{\leftindex^{\Gamma}{#1}}
\title[{On the existence of free sublattices of bounded index}]{On the existence of free sublattices of\\
bounded index and arithmetic applications}
\author{Henri Johnston}
\address{
Department of Mathematics and Statistics\\
University of Exeter\\
Exeter\\
EX4 4QF\\
United Kingdom
}
\email{H.Johnston@exeter.ac.uk}
\urladdr{https://mathematics.exeter.ac.uk/staff/hj241}
\author{Alex Torzewski}
\address{
Department of Mathematics\\
King's College London\\
Strand\\
London\\ 
WC2R 2LS\\
United Kingdom\\
and Heilbronn Institute for Mathematical Research\\
Bristol\\
BS8 1UG\\
United Kingdom 
}
\email{alex.torzewski@gmail.com}
\urladdr{https://nms.kcl.ac.uk/alex.torzewski/}
\subjclass[2020]{16H20, 11R33, 11R27, 11G05, 11G10}
\keywords{Lattices, orders, normal integral bases, Minkowski units, abelian varieties}
\date{Version of 24th May 2024}
\begin{document}

\maketitle

\begin{abstract}
Let $\mathcal{O}$ be a Dedekind domain whose field of fractions $K$ is a global field.
Let $A$ be a finite-dimensional separable $K$-algebra and let $\Lambda$ be an $\mathcal{O}$-order 
in $A$. Suppose that $X$ is a $\Lambda$-lattice such that 
$K \otimes_{\mathcal{O}} X$ is free of some finite rank $n$ over $A$.
Then $X$ contains a (non-unique) free $\Lambda$-sublattice of rank $n$.
The main result of the present article is to show there exists such a sublattice $Y$
such that the generalised module index $[X : Y]_{\mathcal{O}}$ has explicit upper bounds with
respect to division
that are independent of $X$ and can be chosen to satisfy certain conditions.
We give examples of applications to the approximation of normal integral bases 
and strong Minkowski units, and to the Galois module structure of rational points over abelian varieties.
\end{abstract}

\section{Introduction}

Let $A$ be a finite-dimensional semisimple $\Q$-algebra and let $\Lambda$ be an order in $A$. For example, if $G$ is a finite group, then the group ring $\Z[G]$ is an order in the group algebra 
$\Q[G]$.
A $\Lambda$-lattice is a (left) $\Lambda$-module that is finitely generated and torsion-free over $\Z$.
A special case of the Jordan--Zassenhaus theorem says that for each positive integer $t$, there are only
finitely many isomorphism classes of $\Lambda$-lattices of $\Z$-rank at most $t$.

Now fix a positive integer $n$. 
Then there exists a positive integer $m$ with the following property:
given any $\Lambda$-lattice $X$ such that $\Q \otimes_{\Z} X$ is free of rank $n$ as an $A$-module, there exists a free $\Lambda$-sublattice $Y$ of $X$ such that the index $[X:Y]$ is at most $m$. 
To see this, first note that by clearing denominators of a free basis of $\Q \otimes_{\Z} X$ over $A$, 
any such $X$ must contain a (non-unique) free $\Lambda$-sublattice of rank $n$, 
necessarily of finite index $m_{X}$ in $X$.
Since the Jordan--Zassenhaus theorem implies that there are only finitely many choices for $X$ up to isomorphism, we may take $m$ to be the maximal $m_{X}$ as $X$ ranges over all such choices.
Masser--W\"ustholz \cite{MR1361754,MR1336608} defined the \emph{class index} $i_{n}(\Lambda)$
to be the smallest possible value of $m$. 
Using methods from the geometry of numbers, they were able to provide upper bounds for $i_{n}(\Lambda)$
in special cases that led to results on the existence of isogenies between abelian varieties of small degrees (see also \cite{MR1619802}).

We can in fact consider bounds that are also upper bounds with respect to division.
In the above argument, we can instead take $m$ to be any common multiple of the $m_{X}$.
Then $m$ has the following
property: given any $\Lambda$-lattice $X$ such that $\Q \otimes_{\Z} X$ is free of rank $n$ as an $A$-module, there exists a free $\Lambda$-sublattice $Y$ of $X$ such that $[X:Y]$ divides $m$. 
The main goals of the present article are to give explicit choices of $m$ with this property
and to give examples of arithmetic applications. 
In fact, the setting generalises to the case in which $\Lambda$ is an $\mathcal{O}$-order
where $\mathcal{O}$ is a Dedekind domain whose field of fractions $K$ is a global field assumed
not to be equal to $\mathcal{O}$,
and $A$ is a finite-dimensional semisimple $K$-algebra. 
In this situation, the group index $[X:Y]$ is replaced by the generalised module index 
$[X:Y]_{\mathcal{O}}$. 
The main result, Theorem~\ref{thm:main}, gives upper bounds for this index with respect to division
that are independent of $X$ and can be chosen to satisfy certain conditions. 
The proof of this result requires the hypothesis that $A$ is a separable $K$-algebra;
if $K$ is of characteristic zero, then this follows automatically from the assumption that $A$ is semisimple.

We now give examples of the algebraic results and arithmetic applications.
The following result is a weaker version of Theorem~\ref{thm:gpringmain} 
obtained via specialisation and Remark~\ref{rmk:crude-ZG-max-index-bound}.

\begin{theorem}\label{thm:intro-group-ring}
Let $G$ be a finite group and let $k$ be a positive integer.
Then there exists a positive integer $i$, which can be chosen to be coprime to $k$, 
with the following property:
given any $\Z[G]$-lattice $X$ such that $\Q \otimes_{\Z }X$ is free of rank $n$ over $\Q[G]$,
there exists a free $\Z[G]$-sublattice 
$Z$ of $X$ such that the index $[X : Z]$ divides $i \cdot |G|^{\lceil 3|G|/2 \rceil n}$.
\end{theorem}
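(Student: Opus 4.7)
The plan is to deduce Theorem~\ref{thm:intro-group-ring} directly from the (yet-to-be-stated) Theorem~\ref{thm:gpringmain} by specialisation, as the author advertises. Since Theorem~\ref{thm:main} treats the far more general setting of an arbitrary $\OO$-order $\Lambda$ in a finite-dimensional separable $K$-algebra $A$, and Theorem~\ref{thm:gpringmain} specialises this to group rings, essentially all the substantive content is packaged into these two results; what remains is a controlled unwinding into the concrete shape asserted here.

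First, I would specialise Theorem~\ref{thm:gpringmain} to the classical arithmetic setting $\OO = \Z$, $K = \Q$, $A = \Q[G]$, $\Lambda = \Z[G]$. In this case the generalised module index $[X:Y]_{\OO}$ coincides with the positive integer $[X:Y]$, so the output of the main theorem is already a free $\Z[G]$-sublattice $Z \subseteq X$ equipped with a bound for $[X:Z]$ of the shape ``an arbitrary positive integer $i$ coprime to $k$, multiplied by an explicit structural quantity associated to the inclusion $\Z[G] \subseteq \Q[G]$ and the rank $n$''. The freedom to impose $\gcd(i,k)=1$ is built into Theorem~\ref{thm:main} and survives specialisation unchanged.

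The structural quantity that appears is most naturally expressed in terms of the index of $\Z[G]$ in a maximal $\Z$-order $\MM$ of $\Q[G]$ containing it, raised to a power that scales with $n$. To convert this into the clean uniform expression $|G|^{\lceil 3|G|/2 \rceil n}$ appearing in the statement, I would invoke Remark~\ref{rmk:crude-ZG-max-index-bound}, which should bound $[\MM : \Z[G]]$ by an explicit power of $|G|$; combining this with the rank-$n$ exponent from the specialisation should deliver precisely the factor $|G|^{\lceil 3|G|/2 \rceil n}$.

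The only place requiring genuine care is the numerical bookkeeping: verifying that the rank dependence of the bound in Theorem~\ref{thm:gpringmain}, multiplied by the $|G|$-power bound in Remark~\ref{rmk:crude-ZG-max-index-bound}, produces exactly the exponent $\lceil 3|G|/2 \rceil$ rather than something slightly larger. Note that no coprimality is asserted between the $|G|$ factor and $k$, so one may freely absorb any excess into the $i$ term or leave it as part of the $|G|$-power. I would expect no substantive mathematical obstacle beyond this accounting, since the present theorem is by design the crude specialised consequence of the main results, and its role in the introduction is expository rather than technical.
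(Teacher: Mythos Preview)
Your proposal is correct and matches the paper's approach exactly: the paper states Theorem~\ref{thm:intro-group-ring} as a direct consequence of Theorem~\ref{thm:gpringmain} specialised to $\mathcal{O}=\Z$, combined with Remark~\ref{rmk:crude-ZG-max-index-bound}, which records that $[\mathcal{M}:\Z[G]]^{s}$ divides $|G|^{\lceil s|G|/2\rceil}$, so that with $s=3$ one obtains the exponent $\lceil 3|G|/2\rceil n$.
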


In Theorem \ref{thm:gpringmain}, we also give conditions on $G$ under which we can 
take $i=1$ (see also \S \ref{subsec:integral-group-rings}).
The term $|G|^{\lceil 3|G|/2 \rceil n}$ is a crude but neat upper bound 
for a more precise expression that will be made explicit.
The following result is Theorem \ref{thm:rat-rep-groups}, which is just one example of the stronger results that can be obtained in special cases. 

\begin{theorem}\label{thm:intro-rat-rep-groups}
Let $G$ be a finite group and suppose that there exist positive integers $t, n_{1}, \ldots, n_{t}$ 
such that  $\Q[G] \cong \prod_{i=1}^{t} \mathrm{Mat}_{n_{i}}(\Q)$. 
If $X$ is an $\Z[G]$-lattice such that $\Q \otimes_{\Z} X$ is free of rank $n$ over $\Q[G]$,
then there exists a free $\Z[G]$-sublattice $Z$ of $X$ such that $[X : Z]$ divides 
\[
\left( 
|G|^{|G|}
{\prod_{i=1}^{t}} n_{i}^{-n_{i}^{2}}
\right)^{\frac{3n}{2}}.
\]
\end{theorem}

Before sketching the ideas used in the proof of the main result, we discuss how a variant of Theorem \ref{thm:intro-group-ring} can be applied in the following arithmetic situation.
Let $L/K$ be a finite Galois extension such that $K$ is equal to either $\Q$ or an imaginary 
quadratic field. Let $G=\Gal(L/K)$ and let $\mu_{L}$ denote the roots of unity of $L$.
In this setting, $\mathcal{O}_{L}^{\times}/\mu_{L}$ is a $\Z[G]$-lattice and 
one can show that $L/K$ has a so-called \textit{Minkowski unit},
that is, an element $\varepsilon \in \mathcal{O}_{L}^{\times}/\mu_{L}$ such that
$\Q \otimes_{\Z} (\mathcal{O}_{L}^{\times}/\mu_{L}) = \Q[G] \cdot \varepsilon$.
Such an $\varepsilon$ is said to be a \textit{strong Minkowski unit} if 
$\mathcal{O}_{L}^{\times}/\mu_{L} = \Z[G] \cdot \varepsilon$.
The existence of strong Minkowski units (which some authors refer to as 
Minkowski units) has been studied in numerous articles; see Remark \ref{rmk:exist-strong-Minkowski}.
In \S \ref{sec:application-approximation-of-strong-minkowski-units}, we give several results on 
the approximation of strong Minkowski units.
The following result is a weakening of Theorem \ref{thm:main-Minkowski} obtained via Remark \ref{rmk:weak-max-order-bound-Minkowski}.

\begin{theorem}\label{thm:intro-Minkowski}
Let $G$ be a finite group and let $k$ be a positive integer. 
Then there exists a positive integer $i$, which can be chosen to be coprime to $k$, with the following property:
given any finite Galois extension $L/K$ with $\Gal(L/K)\cong G$ and $K$ equal to either $\Q$ or an imaginary quadratic field,
there exists a Minkowski unit $\varepsilon \in \mathcal{O}_{L}^{\times}/\mu_{L}$ 
such that the index $[\mathcal{O}_{L}^{\times}/\mu_{L} : \Z[\Gal(L/K)] \cdot \varepsilon]$ 
divides $i \cdot |G|^{\lceil 3|G|/2 \rceil -2}$.
\end{theorem}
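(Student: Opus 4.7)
The strategy is to apply Theorem~\ref{thm:intro-group-ring} (or rather its refinement Theorem~\ref{thm:gpringmain}) to $\mathcal{O}_L^\times/\mu_L$, regarded as a lattice over a carefully chosen quotient of $\Z[G]$ rather than over $\Z[G]$ itself. The key observation is that, under our hypothesis on $K$, the norm element $N_G := \sum_{g \in G} g$ annihilates $\mathcal{O}_L^\times/\mu_L$: for any $u \in \mathcal{O}_L^\times$, we have $N_G \cdot u = N_{L/K}(u) \in \mathcal{O}_K^\times$, and $\mathcal{O}_K^\times$ consists only of roots of unity precisely when $K = \Q$ or $K$ is imaginary quadratic. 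Thus $\mathcal{O}_L^\times/\mu_L$ is a lattice over the order $\Lambda := \Z[G]/(N_G)$ inside the semisimple $\Q$-algebra $A := \Q[G]/(N_G)$, with $\dim_\Q A = |G|-1$.

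The classical existence of Minkowski units, combined with a comparison of $\Q$-dimensions, shows that $\Q \otimes_\Z (\mathcal{O}_L^\times/\mu_L)$ is free of rank $1$ over $A$ in the cases where the $\Q$-rank of units equals $|G|-1$ (e.g.\ $L/\Q$ totally real, or $K$ imaginary quadratic); for CM $L/\Q$ one further quotients $\Lambda$ by $c-1$, where $c \in G$ is complex conjugation, and reruns the argument with the correspondingly smaller order. One then applies Theorem~\ref{thm:gpringmain} to $X := \mathcal{O}_L^\times/\mu_L$ with $n=1$, arranging the auxiliary integer $i$ to be coprime to $k$ as that theorem permits. This produces $\varepsilon \in X$ such that $\Lambda \cdot \varepsilon$ is $\Lambda$-free of rank $1$ with $[X : \Lambda \varepsilon]$ dividing an explicit integer. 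Because $N_G$ acts as $0$ on $X$, we have $\Lambda \cdot \varepsilon = \Z[G] \cdot \varepsilon$, so $\varepsilon$ is a Minkowski unit and $[\mathcal{O}_L^\times/\mu_L : \Z[G]\varepsilon] = [X : \Lambda \varepsilon]$.

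The main technical point will be to verify that the explicit bound produced by Theorem~\ref{thm:gpringmain} for the smaller order $\Lambda$ actually improves the naive $\Z[G]$-exponent $\lceil 3|G|/2 \rceil$ down to $\lceil 3|G|/2 \rceil - 2$. This should reflect the drop from $\dim_\Q \Q[G] = |G|$ to $\dim_\Q A = |G| - 1$ inside the invariants controlling the main-theorem bound (such as the index in a maximal order), and is presumably the substance of Remark~\ref{rmk:weak-max-order-bound-Minkowski} applied to Theorem~\ref{thm:main-Minkowski}.
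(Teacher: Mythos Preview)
Your overall plan matches the paper's: pass to the quotient order $\Lambda=\Z[G]/(\Tr_G)$, use that $\mathcal{O}_L^{\times}/\mu_L$ is a $\Lambda$-lattice, and exploit the smaller algebra to shave two off the exponent. However, there are three genuine gaps.

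First, Theorem~\ref{thm:gpringmain} applies only to $\mathcal{O}[G]$-lattices $X$ with $KX$ free over $K[G]$; it says nothing about lattices over the non-group-ring order $\Lambda$. The paper instead applies the general Theorem~\ref{thm:main} to $\Lambda_e$ and computes the relevant indices separately (Lemma~\ref{l:augmentationindexcomp}), packaging this as Theorem~\ref{thm:augmain}. Your appeal to Theorem~\ref{thm:gpringmain} ``for the smaller order $\Lambda$'' is not a valid citation.

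Second, and relatedly, the gain of exactly $|G|^{-2}$ does not follow from the observation that $\dim_\Q A=|G|-1$. It comes from the precise equalities $[\mathcal{M}_e:\Lambda_e]_{\Z}=|G|^{-1}[\mathcal{M}:\Z[G]]$ and $[\Lambda_e:(\leftindex^{\mathcal{M}}{\Lambda})_e]_{\Z}=[\mathcal{M}:\Z[G]]$ (Lemma~\ref{l:augmentationindexcomp}), which together give $[\mathcal{M}_e:\Lambda_e]\cdot[\mathcal{M}_e:J]=|G|^{-2}[\mathcal{M}:\Z[G]]^{3}$ for the appropriate $J$. This computation is the actual content you are deferring to Remark~\ref{rmk:weak-max-order-bound-Minkowski}, and it is not a consequence of dimension-counting alone.

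Third, your treatment of the case $K=\Q$ with $L$ totally imaginary is incomplete. You propose quotienting further by $c-1$ for complex conjugation $c$, but a totally imaginary Galois extension of $\Q$ need not be CM: complex conjugation is only well defined up to conjugacy in $G$ and need not act trivially on $\mathcal{O}_L^{\times}/\mu_L$. The paper handles this case differently: $\Q\otimes_\Z(\mathcal{O}_L^{\times}/\mu_L)$ is only cyclic (not free) over $\Q[G]/(\Tr_G)$, and one invokes Remark~\ref{rmk:non-free} to extend Theorem~\ref{thm:augmain} to this situation, obtaining a cyclic $\Lambda$-sublattice of the same bounded index.
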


Again, stronger results can be obtained in special cases.
Analogous applications to the approximation of normal integral bases are given in
\S \ref{sec:approx-NIB} and to the Galois module structure of rational points on abelian varieties
are given in \S \ref{sec:application-rational-points-on-abelian-varieties}.

We now outline the ideas used in the proof of the main result Theorem \ref{thm:main}.
Let $\mathcal{O}$ be a Dedekind domain whose field of fractions $K$ is a global field and assume that 
$\mathcal{O} \neq K$. Let $A$ be a finite-dimensional separable $K$-algebra and let $\Lambda$
be an $\mathcal{O}$-order in $A$. 
Let $\mathcal{M}$ be a maximal $\mathcal{O}$-order in $A$ containing $\Lambda$. 
Note that the existence of $\mathcal{M}$ is ensured by the 
separability hypothesis on $A$ and that the choice of $\mathcal{M}$ need not be unique.
Let $X$ be a $\Lambda$-lattice such that $K \otimes_{\mathcal{O}} X$ is free of rank $1$ over $A$
(the higher rank case is similar).
We consider the unique $\mathcal{M}$-lattice ${\leftindex^{\mathcal{M}}{X}}$ contained in $X$
that is maximal with respect to inclusion.
Then ${\leftindex^{\mathcal{M}}{X}}$ is locally free over $\mathcal{M}$, and as explained in 
Corollary \ref{cor:free-sublattice-of-lf-max-case}, ${\leftindex^{\mathcal{M}}{X}}$ contains
a free $\mathcal{M}$-sublattice $\mathcal{M} \cdot \varepsilon$ 
with an index that can be controlled (the key ingredients here
are the Jordan--Zassenhaus theorem and Roiter's lemma).
Hypotheses on $\mathcal{M}$ can also be given to ensure that this index is trivial
(see Lemma \ref{lem:equiv-locally-free-implies-free}). 
We then obtain a bound on the index $[X : \Lambda \cdot \varepsilon]_{\mathcal{O}}$
by taking the product of bounds on the indices corresponding to each of the three inclusions
\[  
\Lambda \cdot \varepsilon \subseteq \mathcal{M} \cdot \varepsilon
\subseteq \leftindex^{\MM}{X} \subseteq X.
\]
Note that $[\mathcal{M} \cdot \varepsilon : \Lambda \cdot \varepsilon]_{\mathcal{O}} = [\mathcal{M} : \Lambda]_{\mathcal{O}}$, which is equal to the product of the indices of the localisations of $\mathcal{M}$
and $\Lambda$.
Moreover, $[X : \leftindex^{\MM}{X}]_{\mathcal{O}}$ divides $[\mathcal{M}X : \leftindex^{\MM}{X}]_{\mathcal{O}}$, where $\mathcal{M}X$ is the unique $\mathcal{M}$-lattice containing $X$
that is minimal with respect to inclusion. In Corollary~\ref{cor:bddbyconductor}, we show that 
$[\mathcal{M}X : \leftindex^{\MM}{X}]_{\mathcal{O}}$ divides $[\mathcal{M} : J]_{\mathcal{O}}$
where $J$ is any full two-sided ideal of $\mathcal{M}$ contained in $\Lambda$.
Again, $[\mathcal{M} : J]_{\mathcal{O}}$ can be computed by localisation. 
Crucially, the product of bounds of indices obtained is independent of the choice of $\Lambda$-lattice $X$. 

If $G$ is a finite group such that $|G|$ is invertible in $K$ and 
$\Lambda=\mathcal{O}[G]$, then $J$ can be taken to be the (left) conductor of
$\mathcal{M}$ into $\Lambda$ (the left and right conductors are equal in this case)
and $[\mathcal{M} : J]_{\mathcal{O}}$ 
can be computed explicitly using Jacobinski's conductor formula \cite{MR204538}.
We also obtain an explicit formula for  $[\mathcal{M} : \mathcal{O}[G]]_{\mathcal{O}}$,
which may be of independent interest. 
Note that in the setting of Theorem \ref{thm:intro-group-ring} with $n=1$, 
the term $|G|^{\lceil 3|G|/2 \rceil}$
is a crude but neat upper bound for $[\mathcal{M}: \Z[G]] \cdot [\mathcal{M}: J] = [\mathcal{M}:\Z[G]]^{3}$
and the term $i$ is the upper bound for $[\leftindex^{\mathcal{M}}{X} : \mathcal{M} \cdot \varepsilon]$
given by Corollary \ref{cor:free-sublattice-of-lf-max-case}.
Moreover, we can take $i=1$ when $\mathcal{M}$ satisfies the equivalent conditions of Lemma \ref{lem:equiv-locally-free-implies-free} (see \S \ref{subsec:integral-group-rings} for conditions on $G$ under which this holds).

\subsection*{Acknowledgements}
The authors are grateful to Werner Bley, Nigel Byott, Frank Calegari, Hebert Gangl, 
Tommy Hofmann, Donghyeok Lim, Daniel Macias Castillo, Alexandre Maksoud 
and John Nicholson for helpful comments and discussions. 
The authors are indebted to the anonymous referee for carefully reading the manuscript, for corrections to the statements of Proposition \ref{prop:trivialclassgroup} and Lemma 
\ref{lem:Hebrand-special-case}, and for drawing their attention to the work of Masser--W\"ustholz.

For the purpose of open access, the authors have applied a Creative Commons Attribution (CC BY) licence to any author accepted manuscript version arising.

\subsection*{Data availability statement}
Data sharing is not applicable to this article, as no datasets were generated or analysed during the present work.

\section{Preliminaries on lattices and orders}

For further background, we refer the reader to \cite{MR632548,MR1972204,MR1215934}.
Let $\mathcal{O}$ be a Dedekind domain with field of fractions $K$.
To avoid trivialities, we assume that $\mathcal{O} \neq K$.

\subsection{Lattices over Dedekind domains}\label{subsec:lattices-Dedekind}
An \textit{$\mathcal{O}$-lattice} $M$ is a finitely generated torsion-free $\mathcal{O}$-module, or equivalently,
a finitely generated projective $\mathcal{O}$-module.
Using the former definition and the fact that $\mathcal{O}$ is noetherian, we see that 
any $\mathcal{O}$-submodule of an $\mathcal{O}$-lattice is again an $\mathcal{O}$-lattice.

For any finite-dimensional $K$-vector space $V$, an $\mathcal{O}$-lattice in $V$ is a finitely generated $\mathcal{O}$-submodule $M$ of $V$. Given such an $M$, we define a $K$-vector subspace of $V$ by 
\[
K M := \{ \alpha_{1} m_{1} + \alpha_{2} m_{2} + \cdots + \alpha_{r} m_{r} \mid r \in \Z_{\geq 0}, \alpha_i \in K, m_{i} \in M \}
\]
and say that $M$ is a \textit{full} $\mathcal{O}$-lattice in $V$ if $K M=V$. 
Each $\mathcal{O}$-lattice $M$ may be viewed as a full $\mathcal{O}$-lattice in the finite-dimensional 
$K$-vector space $K \otimes_{\mathcal{O}} M$ by identifying $M$ with its image $1 \otimes M$.
We may identify $K \otimes_{\mathcal{O}} M$ with $KM$.

Let $M$ and $N$ be a pair of full $\mathcal{O}$-lattices in a finite-dimensional $K$-vector space
$V$. Since $N$ contains a $K$-basis for $V$, for each $m \in M$ there is a nonzero
$r \in \mathcal{O}$ such that $rm \in N$. 
Therefore there exists a nonzero $r \in \mathcal{O}$ such that $rM \subseteq N$
since $M$ is finitely generated over $\mathcal{O}$. 

For a maximal ideal $\mathfrak{p}$ of $\mathcal{O}$,
let $\mathcal{O}_{\mathfrak{p}}$ denote the localisation of $\mathcal{O}$ at $\mathfrak{p}$.
Let $\widehat{\mathcal{O}}_{\mathfrak{p}}$
denote the completion of $\mathcal{O}$ at $\mathfrak{p}$ and let
$\widehat{K}_{\mathfrak{p}}$ denote its field of fractions. 
For an $\mathcal{O}$-lattice $M$, 
we define the localisation $M$ at $\mathfrak{p}$ to be 
the $\mathcal{O}_{\mathfrak{p}}$-lattice $M_{\mathfrak{p}} := \mathcal{O}_{\mathfrak{p}} \otimes_{\mathcal{O}} M$ and the completion of $M$ at $\mathfrak{p}$ to be 
the $\widehat{\mathcal{O}}_{\mathfrak{p}}$-lattice 
$\widehat{M}_{\mathfrak{p}} := \widehat{\mathcal{O}}_{\mathfrak{p}} \otimes_{\mathcal{O}} M$. 
By identifying $M$ with its image $1 \otimes M$, we may view
$M$ as embedded in $M_{\mathfrak{p}}$.
Viewing $M$ and each $M_{\mathfrak{p}}$ as embedded in $KM$,
we have $M = \bigcap_{\mathfrak{p}} M_{\mathfrak{p}}$, where $\mathfrak{p}$ ranges
over all maximal ideals of $\mathcal{O}$ (see \cite[(4.21)]{MR1972204}).

\subsection{Generalised module indices}\label{subsec:gen-mod-indices}
Much of the material in the following paragraph is explained in more detail in 
\cite[\S 3]{MR236145}.
Let $M,N$ be full $\mathcal{O}$-lattices in a finite-dimensional $K$-vector space $V$.
First consider the case in which $\mathcal{O}$ is a discrete valuation ring. 
Then $M$ and $N$ are both free and of equal rank over $\mathcal{O}$, and so there
exists an $\alpha \in \Aut_{K}(V)$  with $\alpha(M) = N$. Moreover, $\alpha$ is 
unique modulo $\Aut_{\mathcal{O}}(N)$;
hence its determinant is unique modulo $\mathcal{O}^{\times}$, and so the ideal
$[M : N]_{\mathcal{O}} := \mathcal{O} \det(\alpha)$
is a uniquely defined fractional ideal of $\mathcal{O}$.
Now consider the case in which $\mathcal{O}$ is an arbitrary Dedekind domain. 
For almost all maximal ideals $\mathfrak{p}$ of $\mathcal{O}$
we have $M_{\mathfrak{p}} = N_{\mathfrak{p}}$ 
and 
hence 
$[M_{\mathfrak{p}} : N_{\mathfrak{p}} ]_{\mathcal{O}_{\mathfrak{p}}} = \mathcal{O}_{\mathfrak{p}}$
(see \cite[Exercise 4.7]{MR632548}).
Therefore there is a unique fractional ideal $[M:N]_{\mathcal{O}}$ of $\mathcal{O}$ such that
$([M:N]_{\mathcal{O}})_{\mathfrak{p}} = [M_{\mathfrak{p}}:N_{\mathfrak{p}}]_{\mathcal{O}_{\mathfrak{p}}}$
for all $\mathfrak{p}$.
Note that if $M_{1},M_{2},M_{3}$ are full $\mathcal{O}$-lattices in $V$, then
$[M_{1} : M_{3}]_{\mathcal{O}} = [M_{1} :M_{2}]_{\mathcal{O}} \cdot [M_{2}:M_{3}]_{\mathcal{O}}$.
Moreover, if $\mathcal{O}'$ is a Dedekind domain containing $\mathcal{O}$, then 
$\mathcal{O}' \otimes_{\mathcal{O}} [M : N]_{\mathcal{O}}
= [\mathcal{O}' \otimes_{\mathcal{O}} M : \mathcal{O}' \otimes_{\mathcal{O}} N]_{\mathcal{O}'}$.
If $N \subseteq M$ are $\Z$-lattices of equal rank, then we abbreviate $[M:N]_{\Z}$ to $[M:N]$, which is consistent with the fact that $[M:N]_{\Z}$ is the $\Z$-ideal generated by the usual group index of $N$ in $M$.

\begin{lemma}\label{lem:index-diagram}
Suppose we have a diagram of $\mathcal{O}$-lattices with exact rows 
\[
\begin{tikzcd}
0 \ar{r}&\ar{r}  N_{1}  \ar[hookrightarrow]{d} & N_{2} \ar[hookrightarrow]{d}  \ar{r} &\ar{r} N_{3} 
\ar[hookrightarrow]{d} &0\\
0 \ar{r} & M_{1} \ar{r} & M_{2} \ar{r}& \ar{r} M_{3} &0,
\end{tikzcd}
\]
such that $KN_{i} = KM_{i}$ for $i=1,2,3$. 
Then $[M_{2} : N_{2}]_{\mathcal{O}} = [M_{1} : N_{1}]_{\mathcal{O}} \cdot [M_{3} : N_{3}]_{\mathcal{O}}$.
\end{lemma}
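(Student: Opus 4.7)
\textit{Plan.} My plan is to reduce to the case in which $\mathcal{O}$ is a discrete valuation ring, and then to exhibit $\mathcal{O}$-bases of $M_{2}$ and $N_{2}$ compatible with the horizontal exact sequences so that the change-of-basis matrix becomes block upper-triangular. The reduction is valid because the generalised module index is defined by its localisations at the maximal ideals $\mathfrak{p}$ of $\mathcal{O}$; localisation is exact and preserves the fullness hypothesis, since $K \otimes_{\mathcal{O}_{\mathfrak{p}}} (M_{i})_{\mathfrak{p}} = K M_{i} = K N_{i} = K \otimes_{\mathcal{O}_{\mathfrak{p}}} (N_{i})_{\mathfrak{p}}$ for each $i$.

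So assume $\mathcal{O}$ is a DVR, in which case every $\mathcal{O}$-lattice is free. The key step is to fix $\mathcal{O}$-bases of $M_{1}$ and of $M_{3}$, lift the basis of $M_{3}$ along $M_{2} \onto M_{3}$, and combine these lifts with the basis of $M_{1}$ to obtain a basis of $M_{2}$; then carry out the same construction for the top row to get bases of $N_{1}$, $N_{3}$ and $N_{2}$. Writing the basis of $N_{i}$ in terms of the basis of $M_{i}$ for $i=1,3$ yields matrices $A$ and $C$ with $[M_{1}:N_{1}]_{\mathcal{O}} = \mathcal{O}\det(A)$ and $[M_{3}:N_{3}]_{\mathcal{O}} = \mathcal{O}\det(C)$. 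Because the chosen lifts in the middle column on both sides project onto the corresponding bases on the right, the change-of-basis matrix expressing the basis of $N_{2}$ in terms of the basis of $M_{2}$ has the block upper-triangular form $\bigl(\begin{smallmatrix} A & B \\ 0 & C \end{smallmatrix}\bigr)$ for some matrix $B$. Taking determinants then gives $[M_{2}:N_{2}]_{\mathcal{O}} = \mathcal{O}\det(A)\det(C) = [M_{1}:N_{1}]_{\mathcal{O}} \cdot [M_{3}:N_{3}]_{\mathcal{O}}$, as required.

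There is no serious obstacle; the argument is standard. The only care needed is in choosing the lifts consistently so that the lower-left block of the middle matrix vanishes and the lower-right block coincides with $C$, which is precisely what compatibility with the horizontal exact sequences delivers.
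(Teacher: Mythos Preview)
Your argument is correct and follows essentially the same strategy as the paper's own proof: obtain compatible splittings of the two short exact sequences so that the resulting change-of-basis map is block upper-triangular, and then take determinants. The only cosmetic difference is that you first localise to a DVR so that all lattices are free and you can speak of $\mathcal{O}$-bases, whereas the paper stays over the Dedekind domain and uses $\mathcal{O}$-projectivity of $N_{3}$ and $M_{3}$ to produce the needed sections directly; both routes yield the same block upper-triangular matrix $\bigl(\begin{smallmatrix} A & B \\ 0 & C \end{smallmatrix}\bigr)$ and hence the same determinant identity.
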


\begin{proof}
For $i=1,3$, fix $K$-linear maps $\alpha_{i} \colon KM_{i} \rightarrow KM_{i}$ such that $\alpha_{i}(M_{i})=N_{i}$.
Let $\pi \colon N_{2} \rightarrow N_{3}$ denote the map in the above diagram.
Since $N_{3}$ is $\mathcal{O}$-projective, there exists an $\mathcal{O}$-section
$s \colon N_{3} \rightarrow N_{2}$ of $\pi$.
Then define $\tilde{\alpha}_{3} \colon KM_{3} \rightarrow KM_{2}$ by 
$\tilde{\alpha}_{3} = (K \otimes_{\mathcal{O}} s) \circ \alpha_{3}$. 
Fixing an $\mathcal{O}$-linear splitting $M_{2} \cong M_{1} \oplus M_{3}$ 
(which exists since $M_{3}$ is $\mathcal{O}$-projective) and thus a $K$-linear splitting 
$KM_{2} \cong KM_{1} \oplus KM_{3}$, we then obtain a $K$-linear map 
$\alpha_{2} := (\alpha_{1} + \tilde{\alpha}_{3}) \colon  KM_{2} \rightarrow KM_{2}$ such that 
$\alpha_{2}(M_{2})=N_{2}$ and $\alpha_{1}(M_{1})=N_{1}$. 
Hence, with respect to a $K$-basis of $KM_{2}$ extending a $K$-basis of $KM_{1}$, the matrix representing
$\alpha_{2}$ is block upper triangular.
Consequently, $\det(\alpha_{2})=\det(\alpha_{1})\det(\alpha_{3})$, and thus we obtain the desired result.
\end{proof}

\subsection{Duals of lattices}\label{subsec:duals-of-lattices}
Let $M$ be an $\mathcal{O}$-lattice. 
The linear dual $M^{\vee} := \Hom_{\mathcal{O}}(X,\mathcal{O})$ is also an $\mathcal{O}$-lattice
and there is a canonical identification $(M^{\vee})^{\vee}=M$. 
Moreover, $(-)^{\vee}$ is inclusion-reversing.
For a maximal ideal $\mathfrak{p}$ of $\mathcal{O}$, we have 
$(M_{\mathfrak{p}})^{\vee} = (M^{\vee})_{\mathfrak{p}}$.
Together with the fact that determinants are invariant under transposition, this implies that
if $M$ and $N$ are full $\mathcal{O}$-lattices in a finite-dimensional $K$-vector space $V$,
then $[M : N]_{\mathcal{O}} = [N^{\vee} : M^{\vee}]_{\mathcal{O}}$.

\subsection{Lattices over orders}
Let $A$ be a finite-dimensional $K$-algebra and let $\Lambda$ be an \textit{$\mathcal{O}$-order} in $A$, that is, a subring of $A$ that is also a full $\mathcal{O}$-lattice in $A$. 
Note that $\Lambda$ is both left and right noetherian since $\Lambda$ is finitely generated over
$\mathcal{O}$. 
A left \textit{$\Lambda$-lattice} $X$ is a left $\Lambda$-module that when considered as an $\mathcal{O}$-module is also an $\mathcal{O}$-lattice; in this case, $KX$ may be viewed as a left $A$-module.

Henceforth all modules (resp.\ lattices) shall be assumed to be left modules (resp.\ lattices) unless otherwise stated.
Two $\Lambda$-lattices are said to be isomorphic if they are isomorphic as $\Lambda$-modules.

For a maximal ideal $\mathfrak{p}$ of $\mathcal{O}$,
the localisation $\Lambda_{\mathfrak{p}}$ is an 
$\mathcal{O}_{\mathfrak{p}}$-order in $A$, and 
 the completion $\widehat{\Lambda}_{\mathfrak{p}}$ is a 
$\widehat{\mathcal{O}}_{\mathfrak{p}}$-order
in $\widehat{K}_{\mathfrak{p}} \otimes_{K} A$.
Localising a $\Lambda$-lattice $X$ 
at $\mathfrak{p}$ yields
a $\Lambda_{\mathfrak{p}}$-lattice $X_{\mathfrak{p}}$, 
and completing $X$ at $\mathfrak{p}$ yields 
a $\widehat{\Lambda}_{\mathfrak{p}}$-lattice $\widehat{X}_{\mathfrak{p}}$.
Given  $\Lambda$-lattices $X$ and $Y$,
we have that $X_{\mathfrak{p}} \cong Y_{\mathfrak{p}}$ as $\Lambda_{\mathfrak{p}}$-lattices
if and only if $\widehat{X}_{\mathfrak{p}} \cong \widehat{Y}_{\mathfrak{p}}$ as 
$\widehat{\Lambda}_{\mathfrak{p}}$-lattices (see \cite[(18.2)]{MR1972204}).
For a positive integer $n$, a $\Lambda$-lattice $X$ is said to be \textit{locally free of rank $n$},
if for each maximal ideal $\mathfrak{p}$ of $\mathcal{O}$,
the $\Lambda_{\mathfrak{p}}$-lattice $X_{\mathfrak{p}}$ is free of rank $n$,  
or equivalently, the 
$\widehat{\Lambda}_{\mathfrak{p}}$-lattice $\widehat{X}_{\mathfrak{p}}$ is free of rank $n$. 
Note that every locally free $\Lambda$-lattice is projective by \cite[(8.19)]{MR632548}. 

\subsection{Maximal orders}\label{subsec:max-orders}
Suppose that $A$ is a separable finite-dimensional $K$-algebra (see \cite[\S 7c]{MR1972204}).
A \textit{maximal} $\mathcal{O}$-order in $A$ is an $\mathcal{O}$-order that is not properly contained in any other $\mathcal{O}$-order in $A$.
For any $\mathcal{O}$-order $\Lambda$ in $A$, there exists a (not necessarily unique)
maximal $\mathcal{O}$-order $\mathcal{M}$ in $A$ containing $\Lambda$ 
by \cite[(10.4)]{MR1972204}.
If $\mathcal{M}$ is a maximal $\mathcal{O}$-order,  
$X$ is an $\mathcal{M}$-lattice, and $n$ is a positive integer, 
then by \cite[(31.2)(iii)]{MR632548} we have that
$KX$ is free of rank $n$ over $A$ if and only if $X$ is locally free of rank $n$.

\subsection{Locally free class groups and cancellation properties}\label{subsec:lfcg-sfc}
Suppose that $K$ is a global field and that $A$ is separable finite-dimensional $K$-algebra.
Let $\Lambda$ be an $\mathcal{O}$-order $A$.
Let $P(\Lambda)$ be the free abelian group generated by symbols $[X]$, one for each 
isomorphism class of locally free $\Lambda$-lattices $X$, modulo relations 
$[X] = [X_{1}] + [X_{2}]$ whenever $X \cong X_{1} \oplus X_{2}$.
We define the \textit{locally free class group} $\Cl(\Lambda)$ of $\Lambda$ 
to be the subgroup of $P(\Lambda)$ consisting of all elements that can be written in the form 
$[X]-[Y]$, with $X,Y$ locally free and $KX \cong KY$. 

We remark that $[X]-[Y] =0$ in $\Cl(\Lambda)$ if and only if $X$ is \textit{stably isomorphic to} $Y$,
that is, $X \oplus \Lambda^{(k)} \cong Y \oplus \Lambda^{(k)}$ for some positive integer $k$
(here $\Lambda^{(k)}$ denotes the direct sum of $k$ copies of $\Lambda$).
The order $\Lambda$ is said to have the \textit{locally free cancellation property} if given any pair of 
locally free $\Lambda$-lattices $X$ and $Y$,
\[
X \oplus \Lambda^{(k)} \cong Y \oplus \Lambda^{(k)} \textrm{ for some } k \in \Z_{\geq 0} \implies X \cong Y.
\]
Moreover, $\Lambda$ is said to have the
\textit{stably free cancellation property} 
if this holds in the special case that $Y$ is free.
If $A$ satisfies the so-called \textit{Eichler condition} relative to $\mathcal{O}$, then $\Lambda$ has the
locally free cancellation property; this condition is satisfied 
if $A$ is commutative (see \cite[\S 51]{MR892316} for further details).

If $n$ is a positive integer and
$Y$ is any locally free $\Lambda$-lattice of rank $n$, then by \cite[(31.14)]{MR632548} there exists a locally free $\Lambda$-lattice $X$ in $A$ such that $Y \cong \Lambda^{(n-1)} \oplus X$. 
Hence every element of $\Cl(\Lambda)$ is expressible in the form $[X_{1}]-[X_{2}]$,
where $X_{1}$ and $X_{2}$ are locally free $\Lambda$-lattices in $A$.
In particular, the Jordan--Zassenhaus theorem \cite[(26.4)]{MR1972204} 
implies that $\Cl(\Lambda)$ is finite.
Moreover, for each such pair $X_{1}, X_{2}$, 
there exists another such lattice $X_{3}$ such that $X_{2} \oplus X_{3} \cong \Lambda \oplus X_{1}$
by \cite[(31.7)]{MR632548}.
Therefore every element of $\Cl(\Lambda)$ is in fact expressible in the form $[X]-[\Lambda]$
for some locally free $\Lambda$-lattice $X$ in $A$.

The following result is easily deduced from the above discussion.

\begin{lemma}\label{lem:equiv-locally-free-implies-free}
The following statements are equivalent:
\begin{enumerate}
\item every locally free $\Lambda$-lattice is in fact free;
\item every locally free $\Lambda$-lattice of rank $1$ is in fact free;
\item $\Cl(\Lambda)$ is trivial and $\Lambda$ has the stably free cancellation property;
\item $\Cl(\Lambda)$ is trivial and $\Lambda$ has the locally free cancellation property.
\end{enumerate} 
\end{lemma}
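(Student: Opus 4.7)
The plan is to establish the cycle of implications $(i) \Rightarrow (ii)$, $(ii) \Rightarrow (i)$, $(i) \Rightarrow (iv)$, $(iv) \Rightarrow (iii)$, $(iii) \Rightarrow (ii)$, which together yield the claimed four-way equivalence. Two of these are immediate: $(i) \Rightarrow (ii)$ specialises from all ranks to rank $1$, and $(iv) \Rightarrow (iii)$ is just the observation that by definition stably free cancellation is the restriction of locally free cancellation to the case when $Y$ is free.

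For $(ii) \Rightarrow (i)$, I would quote the structural result \cite[(31.14)]{MR632548}, recalled in \S\ref{subsec:lfcg-sfc}: any locally free $\Lambda$-lattice $Y$ of rank $n$ splits as $Y \cong \Lambda^{(n-1)} \oplus X$ for some locally free $\Lambda$-lattice $X$ of rank $1$ sitting inside $A$. Applying $(ii)$ to $X$ gives $X \cong \Lambda$, whence $Y \cong \Lambda^{(n)}$ is free.

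For $(i) \Rightarrow (iv)$, I would first deduce $\Cl(\Lambda) = 0$ from the fact, also recalled in \S\ref{subsec:lfcg-sfc}, that every class in $\Cl(\Lambda)$ has the form $[X] - [\Lambda]$ for some locally free $\Lambda$-lattice $X$ in $A$; under $(i)$ such an $X$ is free, so $X \cong \Lambda$ and the class vanishes. For locally free cancellation, suppose $X \oplus \Lambda^{(k)} \cong Y \oplus \Lambda^{(k)}$ with $X, Y$ locally free. By $(i)$, $X \cong \Lambda^{(a)}$ and $Y \cong \Lambda^{(b)}$ for some $a, b \in \Z_{\geq 0}$, giving $\Lambda^{(a+k)} \cong \Lambda^{(b+k)}$; applying $K \otimes_{\mathcal{O}} -$ and comparing $A$-dimensions forces $a=b$, so $X \cong Y$.

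Finally, for $(iii) \Rightarrow (ii)$, let $X$ be locally free of rank $1$. The definition of locally free lattice together with the computation $K \otimes_{\mathcal{O}} X \cong K \otimes_{\mathcal{O}_{\mathfrak{p}}} X_{\mathfrak{p}} \cong A$ for any $\mathfrak{p}$ shows that $KX \cong A$ as $A$-modules, so after choosing an identification we may view $X$ as a full locally free $\Lambda$-sublattice of $A$. Since $\Cl(\Lambda) = 0$ by $(iii)$, the class $[X] - [\Lambda]$ vanishes, i.e.\ $X$ is stably isomorphic to $\Lambda$, so $X \oplus \Lambda^{(k)} \cong \Lambda^{(k+1)}$ for some $k \in \Z_{\geq 0}$. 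Stably free cancellation then yields $X \cong \Lambda$. There is no real obstacle in the argument: everything reduces to correctly combining the definition of $\Cl(\Lambda)$, the two structural facts from \cite[(31.14)]{MR632548} and \cite[(31.7)]{MR632548} summarised in \S\ref{subsec:lfcg-sfc}, and the elementary rank comparison $\Lambda^{(a)} \cong \Lambda^{(b)} \Rightarrow a=b$ obtained by extension of scalars to $K$.
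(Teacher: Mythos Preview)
Your proof is correct and is precisely the kind of argument the paper has in mind: the paper does not give a proof at all, merely stating that the lemma ``is easily deduced from the above discussion'' in \S\ref{subsec:lfcg-sfc}, and your cycle of implications makes explicit exactly those deductions using \cite[(31.14)]{MR632548}, the description of classes as $[X]-[\Lambda]$, and the definitions of the cancellation properties. The only superfluous step is embedding $X$ into $A$ in $(iii)\Rightarrow(ii)$, since $[X]-[\Lambda]\in\Cl(\Lambda)$ already follows from $KX\cong A$, but this is harmless.
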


\begin{remark}\label{rmk:cancellation-max-order}
Lemma \ref{lem:equiv-locally-free-implies-free} will often be applied in the case that $\Lambda$
is a maximal $\Z$-order. Smertnig and Voight \cite[Theorem 1.3]{MR4105795} have classified
all maximal $\Z$-orders in totally definite quaternion algebras with the locally free cancellation 
property, and from this it is straightforward to determine whether any given maximal $\Z$-order
in a finite-dimensional semisimple $\Q$-algebra has the locally free cancellation property. 
See also Remark \ref{rmk:group-alg-max-order-canc-prop}.
\end{remark}

\section{Overlattices and sublattices for overorders}\label{sec:over-and-sublattices}

Let $\OO$ be a Dedekind domain with field of fractions $K$ and assume that $\mathcal{O} \neq K$.

\subsection{Setup and definitions}
Let $A$ be a finite-dimensional $K$-algebra.
Let $\Lambda \subseteq \Gamma$ be $\mathcal{O}$-orders in $A$ and let
$X$ be a $\Lambda$-lattice. Define 
\[
\Gamma X := \{ \gamma_{1} m_{1} + \gamma_{2} m_{2} + \cdots + \gamma_{r} m_{r} \mid r \in \Z_{\geq 0}, m_{i} \in X, \gamma_{i} \in \Gamma \} \subseteq KX.
\]
This is the unique $\Gamma$-lattice in $KX$
containing $X$ that is minimal with respect to inclusion.

There exists a nonzero $r \in \mathcal{O}$ such that $r\Gamma \subseteq \Lambda$
(see \S \ref{subsec:lattices-Dedekind})
and so  $r\Gamma X$ is a $\Gamma$-lattice contained in $X$ of finite index.
Since the sum of any two $\Gamma$-lattices contained in $X$
is also a $\Gamma$-lattice contained in $X$, we see that there exists a 
unique $\Gamma$-lattice contained in $X$ that is maximal with respect to inclusion, 
which we shall denote by $\leftindex^{\Gamma}{X}$.

For a right $\Lambda$-lattice $X$, we define $X\Gamma$ and $X^{\Gamma}$ similarly. 
Note that $\leftindex^{\Gamma}{\Lambda}$ (resp.\ $\Lambda^{\Gamma}$) coincides with the 
right (resp.\ left) conductor of $\Gamma$ into 
$\Lambda$ (see \cite[(27.2)]{MR632548}).

\subsection{Bounds on indices}
The following result gives a bound on 
\[
[\Gamma X : \leftindex^{\Gamma}{X}]_{\mathcal{O}}
= [\Gamma X : X]_{\mathcal{O}} \cdot [X :  \leftindex^{\Gamma}{X}]_{\mathcal{O}}
\]
that only depends on $\Gamma$ and $\Lambda$, and not on the particular choice of lattice $X$.

\begin{prop}\label{prop:bddbyconductor}
Let $A$ be a finite-dimensional $K$-algebra and let $\Lambda \subseteq \Gamma$ be $\mathcal{O}$-orders in $A$.  Let $J$ be any full two-sided ideal of $\Gamma$ contained in $\Lambda$.
Let $X$ be a $\Lambda$-lattice such that $\Gamma X$
is locally free of rank $n$ over $\Gamma$.
Then $[\Gamma X : \leftindex^{\Gamma}{X}]_{\mathcal{O}}$ 
%= [\Gamma X : X]_{\mathcal{O}} \cdot [X :  \leftindex^{\Gamma}{X}]_{\mathcal{O}}$
divides $[\Gamma : J]_{\mathcal{O}}^{n}$.
\end{prop}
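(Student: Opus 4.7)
The strategy is to sandwich $\leftindex^{\Gamma}{X}$ between $\Gamma X$ and a slightly smaller natural $\Gamma$-sublattice whose index can be computed directly, namely $J\cdot\Gamma X$. First I would check that $J\cdot\Gamma X$ is a full $\mathcal{O}$-lattice in $KX$ that is closed under the $\Gamma$-action and contained in $X$. The $\Gamma$-stability uses that $J$ is a left ideal of $\Gamma$, so $\gamma(jy)=(\gamma j)y\in J\cdot\Gamma X$ for all $\gamma\in\Gamma$, $j\in J$, $y\in\Gamma X$; the containment in $X$ uses that $J$ is a right ideal of $\Gamma$ with $J\subseteq\Lambda$, so any element $\sum j_{i}\gamma_{i} x_{i}$ with $j_{i}\in J$, $\gamma_{i}\in\Gamma$, $x_{i}\in X$ satisfies $j_{i}\gamma_{i}\in J\subseteq\Lambda$ and hence lies in $\Lambda\cdot X=X$. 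Fullness follows from $KJ=A$ and $K(\Gamma X)=KX$. By maximality of $\leftindex^{\Gamma}{X}$ among $\Gamma$-lattices in $X$, this yields the chain of full $\mathcal{O}$-lattices
\[
J\cdot\Gamma X \;\subseteq\; \leftindex^{\Gamma}{X}\;\subseteq\; X \;\subseteq\; \Gamma X.
\]

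Next I would apply multiplicativity of the generalised module index (\S\ref{subsec:gen-mod-indices}) to the inclusions $J\cdot\Gamma X\subseteq\leftindex^{\Gamma}{X}\subseteq\Gamma X$: both factors are integral ideals of $\mathcal{O}$, so $[\Gamma X:\leftindex^{\Gamma}{X}]_{\mathcal{O}}$ divides $[\Gamma X:J\cdot\Gamma X]_{\mathcal{O}}$. It therefore suffices to prove $[\Gamma X:J\cdot\Gamma X]_{\mathcal{O}}=[\Gamma:J]_{\mathcal{O}}^{n}$.

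To establish this equality, I would localise at each maximal ideal $\mathfrak{p}$ of $\mathcal{O}$ and use that localisation commutes with products of the form $J\cdot\Gamma X$ and with the generalised index. The hypothesis that $\Gamma X$ is locally free of rank $n$ provides an isomorphism of $\Gamma_{\mathfrak{p}}$-lattices $(\Gamma X)_{\mathfrak{p}}\cong \Gamma_{\mathfrak{p}}^{(n)}$ under which $(J\cdot\Gamma X)_{\mathfrak{p}}=J_{\mathfrak{p}}\cdot(\Gamma X)_{\mathfrak{p}}$ corresponds to $J_{\mathfrak{p}}^{(n)}=J_{\mathfrak{p}}\oplus\cdots\oplus J_{\mathfrak{p}}$. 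Applying Lemma \ref{lem:index-diagram} inductively to the obvious split short exact sequences
\[
0\rightarrow J_{\mathfrak{p}}^{(k)}\rightarrow J_{\mathfrak{p}}^{(k+1)}\rightarrow J_{\mathfrak{p}}\rightarrow 0,\qquad 0\rightarrow \Gamma_{\mathfrak{p}}^{(k)}\rightarrow \Gamma_{\mathfrak{p}}^{(k+1)}\rightarrow \Gamma_{\mathfrak{p}}\rightarrow 0,
\]
gives $[\Gamma_{\mathfrak{p}}^{(n)}:J_{\mathfrak{p}}^{(n)}]_{\mathcal{O}_{\mathfrak{p}}}=[\Gamma_{\mathfrak{p}}:J_{\mathfrak{p}}]_{\mathcal{O}_{\mathfrak{p}}}^{n}$. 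Since this holds for every $\mathfrak{p}$, the global identity $[\Gamma X:J\cdot\Gamma X]_{\mathcal{O}}=[\Gamma:J]_{\mathcal{O}}^{n}$ follows, completing the proof.

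The only part that requires any real care is the verification that $J\cdot\Gamma X$ is simultaneously a $\Gamma$-sublattice and a $\Lambda$-sublattice of $X$, which is exactly where the two-sidedness of $J$ is used; everything else is a routine assembly using the multiplicativity of $[-:-]_{\mathcal{O}}$, its compatibility with localisation, and Lemma~\ref{lem:index-diagram}.
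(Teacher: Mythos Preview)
Your proposal is correct and follows essentially the same route as the paper: sandwich $\leftindex^{\Gamma}{X}$ between $\Gamma X$ and $J\cdot\Gamma X$ (the paper writes this as $JX$ and then observes $JX=J\Gamma X$ using that $J$ is a right $\Gamma$-module), reduce the divisibility claim to computing $[\Gamma X:J\cdot\Gamma X]_{\mathcal{O}}$, and evaluate this locally via the free decomposition $\Gamma_{\mathfrak{p}}^{(n)}$. The only cosmetic difference is that the paper reads off the local index directly from the block-diagonal decomposition $\Gamma\varepsilon_{1}\oplus\cdots\oplus\Gamma\varepsilon_{n}\supseteq J\varepsilon_{1}\oplus\cdots\oplus J\varepsilon_{n}$, whereas you phrase the same computation as an induction using Lemma~\ref{lem:index-diagram}.
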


\begin{remark}
There are many possible choices of $J$, and the best choice will be context specific.
For example, a weak but general choice is $J=[\Gamma:\Lambda]_{\mathcal{O}} \cdot \Gamma$.
Moreover, $J$ can always be taken to be the two-sided ideal of $\Gamma$ generated by the central conductor of $\Gamma$ into $\Lambda$, that is, by $\{ x \in C \mid x\Gamma \subseteq \Lambda \}$, where $C$ denotes
the centre of $A$.
\end{remark}

\begin{proof}[Proof of Proposition \ref{prop:bddbyconductor}]
Since $J$ is a left $\Gamma$-lattice contained in $\Lambda$, 
we have that $JX$ is a left $\Gamma$-lattice contained in $X$. 
Hence $JX$ is contained in $\leftindex^{\Gamma}{X}$.
The chain of containments
\[
JX \subseteq \leftindex^{\Gamma}{X} \subseteq  X \subseteq \Gamma X 
\]
implies that
$[\Gamma X : \leftindex^{\Gamma}{X} ]_{\mathcal{O}}$
divides $[\Gamma X : JX]_{\mathcal{O}}$. 
Thus it remains to show that
\[
[\Gamma X : J X]_{\mathcal{O}}=[\Gamma : J]_{\mathcal{O}}^{n}.
\] 
Since indices are defined locally and 
$([\Gamma X : JX]_{\mathcal{O}})_{\mathfrak{p}}
=[\Gamma_{\mathfrak{p}} X_{\mathfrak{p}} : J_{\mathfrak{p}}X_{\mathfrak{p}}]_{\mathcal{O}_{\mathfrak{p}}}$
for every maximal ideal $\mathfrak{p}$ of $\mathcal{O}$, we can and do assume without loss of generality 
that $\mathcal{O}$ is a discrete valuation ring. 
Then by hypothesis there exist $\varepsilon_{1}, \ldots, \varepsilon_{n}\in \Gamma X$ such that 
$\Gamma X = \Gamma \varepsilon_{1} \oplus \cdots \oplus \Gamma \varepsilon_{n}$.
Since $J$ is a right $\Gamma$-module, we have
\[
JX
=J\Gamma X
=J(\Gamma \varepsilon_{1} \oplus \cdots \oplus \Gamma \varepsilon_{n})
=J \varepsilon_{1} \oplus \cdots \oplus J \varepsilon_{n}.
\]
Therefore  
\[
[\Gamma X : J  X]_{\mathcal{O}}
= [\Gamma \varepsilon_{1} \oplus \cdots \oplus \Gamma \varepsilon_{n} : J \varepsilon_{1} \oplus \cdots \oplus J \varepsilon_{n}]_{\mathcal{O}}
=[\Gamma : J]_{\mathcal{O}}^{n}. \qedhere
\]
\end{proof} 

\begin{corollary}\label{cor:bddbyconductor}
Let $A$ be a separable finite-dimensional $K$-algebra and let $\Lambda$ be an $\mathcal{O}$-order in $A$.
Let $\mathcal{M}$ be a maximal $\mathcal{O}$-order in $A$ containing $\Lambda$ and
let $J$ be any full two-sided ideal of $\mathcal{M}$ contained in $\Lambda$.
Let $X$ be a $\Lambda$-lattice such that $KX$ is free of rank $n$ over $A$. 
Then $[\mathcal{M} X : \leftindex^{\mathcal{M}}{X}]_{\mathcal{O}}$ 
divides $[\mathcal{M} : J]_{\mathcal{O}}^{n}$.
\end{corollary}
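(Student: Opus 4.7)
The plan is to deduce this corollary directly from Proposition \ref{prop:bddbyconductor} applied with $\Gamma = \mathcal{M}$. The only thing that needs verification is the hypothesis of that proposition, namely that $\mathcal{M}X$ is locally free of rank $n$ over $\mathcal{M}$.

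First I would observe that $K(\mathcal{M}X) = KX$ as $A$-modules, since $\mathcal{M} \subseteq A$ and $K \otimes_{\mathcal{O}} \mathcal{M}X$ is generated as a $K$-vector space by the image of $X$. Thus $K(\mathcal{M}X)$ is free of rank $n$ over $A$ by hypothesis on $X$.

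Next I would invoke the fact recalled in \S\ref{subsec:max-orders} (namely \cite[(31.2)(iii)]{MR632548}), which says that for a maximal $\mathcal{O}$-order $\mathcal{M}$ in a separable $K$-algebra $A$, an $\mathcal{M}$-lattice $Y$ is locally free of rank $n$ if and only if $KY$ is free of rank $n$ over $A$. Applied to $Y = \mathcal{M}X$, this gives that $\mathcal{M}X$ is locally free of rank $n$ over $\mathcal{M}$.

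Finally, the hypotheses of Proposition \ref{prop:bddbyconductor} are met with $\Gamma = \mathcal{M}$ (since a maximal order is in particular an order, $J$ is a full two-sided ideal of $\mathcal{M}$ contained in $\Lambda$, and $\mathcal{M}X$ is locally free of rank $n$), so that proposition immediately yields that $[\mathcal{M}X : \leftindex^{\mathcal{M}}{X}]_{\mathcal{O}}$ divides $[\mathcal{M} : J]_{\mathcal{O}}^{n}$, as required. There is no real obstacle here; the content is entirely in Proposition \ref{prop:bddbyconductor}, and the corollary is just the specialisation to the case when the overorder is maximal, where the free-of-rank-$n$ hypothesis on $KX$ automatically upgrades to local freeness of $\mathcal{M}X$.
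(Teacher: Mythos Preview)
Your proposal is correct and follows exactly the same approach as the paper's own proof: verify that $\mathcal{M}X$ is locally free of rank $n$ over $\mathcal{M}$ using the fact recalled in \S\ref{subsec:max-orders}, then apply Proposition~\ref{prop:bddbyconductor} with $\Gamma=\mathcal{M}$. The paper's proof is essentially a one-line version of what you wrote.
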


\begin{proof}
Since $KX$ is free of rank $n$ over $A$, we have that
$\mathcal{M}X$ is locally free of rank $n$ over $ \mathcal{M}$ (see \S \ref{subsec:max-orders}),
and so the desired result follows directly from Proposition \ref{prop:bddbyconductor}.
\end{proof}

\begin{remark}
In Proposition \ref{prop:bddbyconductor} and Corollary \ref{cor:bddbyconductor}, the order $\Lambda$
can be replaced by the so-called \textit{associated order} $\mathcal{A}(X) = \{ \alpha \in A \mid \alpha X \subseteq X\}$.
Thus if the containment $\Lambda \subseteq \mathcal{A}(X)$ is strict, then working over $\mathcal{A}(X)$
may allow a choice of $J \subseteq \mathcal{A}(X)$ with improved index
$[\mathcal{M} : J]_{\mathcal{O}}$.
For example, if $\mathcal{M}$ is a maximal order containing $\Lambda$ and we take $X=\mathcal{M}$,
then $\mathcal{A}(X)=\mathcal{M}$ and so we can take $J=\mathcal{M}$, which is consistent with the fact
that $\mathcal{M} X = X = \leftindex^{\mathcal{M}}{X}$ in this situation.
Of course, the disadvantage of this approach is that $\mathcal{A}(X)$ depends on $X$.
\end{remark}

\subsection{Duals and overorders}\label{subsec:duals-and-overorders}

For an $\mathcal{O}$-order $\Lambda$ in a finite-dimensional $K$-algebra and any 
left (resp.\ right) $\Lambda$-lattice $X$, the dual 
$X^{\vee}=\Hom_{\mathcal{O}}(X,\mathcal{O})$ 
has the structure of a right (resp.\ left) $\Lambda$-lattice,
and there is a canonical identification $(X^{\vee})^{\vee}=X$.

\begin{lemma}\label{lem:dualsubover}
Let $\Lambda \subseteq \Gamma$ be $\mathcal{O}$-orders in a finite-dimensional $K$-algebra.
\begin{enumerate}
\item If $X$ is a left $\Lambda$-lattice, then 
we have an equality of right $\Gamma$-lattices
$(\Gamma X)^{\vee} = (X^{\vee})^{\Gamma}$ 
and an equality of indices 
$[\Gamma X : X]_{\mathcal{O}} = [X^{\vee} : (X^{\vee})^{\Gamma}]_{\mathcal{O}}$. 
\item If $X$ is a right $\Lambda$-lattice, then 
we have an equality of left $\Gamma$-lattices $(X\Gamma)^{\vee}=\lGamma{(X^{\vee})}$
and an equality of indices $[X\Gamma : X]_{\mathcal{O}}= [X^{\vee} : \lGamma{(X^{\vee})}]_{\mathcal{O}}$.
\end{enumerate}
\end{lemma}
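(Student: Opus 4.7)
The plan is to prove (i) directly by unwinding the definitions inside the ambient $K$-vector space, and to observe that (ii) follows from the same argument after swapping the sides (equivalently, by applying (i) to the opposite algebra $A^{\mathrm{op}}$ with its induced $\mathcal{O}$-orders $\Lambda^{\mathrm{op}} \subseteq \Gamma^{\mathrm{op}}$, noting that left lattices over $\Gamma$ are right lattices over $\Gamma^{\mathrm{op}}$).

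For (i), set $V := KX$ and identify $K \otimes_{\mathcal{O}} X^{\vee}$ with $V^{\vee} = \Hom_{K}(V,K)$ in the natural way. For any full $\mathcal{O}$-lattice $M$ in $V$, the dual lattice $M^{\vee}$ is identified with $M^{*} := \{ f \in V^{\vee} \mid f(M) \subseteq \mathcal{O}\}$. The algebra $A$ acts on $V$, and hence $\Gamma$ acts on $V^{\vee}$ on the right via $(f \cdot \gamma)(v) := f(\gamma v)$; this action extends the right $\Lambda$-action on $X^{\vee}$.

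First I would compute $(\Gamma X)^{\vee}$ as a subset of $V^{\vee}$. By definition,
\[
(\Gamma X)^{*} = \{ f \in V^{\vee} \mid f(\gamma x) \in \mathcal{O} \textrm{ for all } \gamma \in \Gamma, x \in X \}
= \{ f \in V^{\vee} \mid f \cdot \Gamma \subseteq X^{*} \},
\]
where for the last equality we use the definition of the right $\Gamma$-action. I then claim this coincides with $(X^{\vee})^{\Gamma}$. Indeed, any $f \in (X^{\vee})^{\Gamma}$ satisfies $f \cdot \Gamma \subseteq (X^{\vee})^{\Gamma} \subseteq X^{\vee}$. Conversely, if $f \in X^{\vee}$ satisfies $f \cdot \Gamma \subseteq X^{\vee}$, then $f\Gamma$ is a right $\Gamma$-submodule of $X^{\vee}$, and it is finitely generated over $\mathcal{O}$ because $\Gamma$ is, so $f \in f\Gamma \subseteq (X^{\vee})^{\Gamma}$ by maximality. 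This gives the equality of right $\Gamma$-lattices $(\Gamma X)^{\vee} = (X^{\vee})^{\Gamma}$.

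For the index equality, I would apply the general duality formula for generalised module indices recalled in \S \ref{subsec:duals-of-lattices} to the pair of full $\mathcal{O}$-lattices $X \subseteq \Gamma X$ in $V$, obtaining $[\Gamma X : X]_{\mathcal{O}} = [X^{\vee} : (\Gamma X)^{\vee}]_{\mathcal{O}}$. Substituting $(\Gamma X)^{\vee} = (X^{\vee})^{\Gamma}$ from the previous step yields the claim. The one point requiring attention is the passage through $V^{\vee}$ in step one — verifying that the right $\Gamma$-action on $V^{\vee}$ restricts correctly to a right $\Lambda$-action on $X^{\vee}$ compatible with the identification $X^{\vee} = X^{*}$ — but this is a routine check. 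The argument for (ii) is verbatim the same with the roles of left and right interchanged.
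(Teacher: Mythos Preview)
Your proof is correct, but it takes a somewhat different route from the paper's. You work explicitly inside the ambient dual space $V^{\vee}$ and compute directly that
\[
(\Gamma X)^{\vee} = \{\, f \in V^{\vee} : f\cdot\Gamma \subseteq X^{\vee}\,\},
\]
and then verify that this set coincides with $(X^{\vee})^{\Gamma}$ using only the maximality of the latter. The paper instead argues more abstractly: it first observes that $(\Gamma X)^{\vee}$ is a right $\Gamma$-lattice contained in $X^{\vee}$, giving $(\Gamma X)^{\vee} \subseteq (X^{\vee})^{\Gamma}$, and then obtains the reverse containment by dualising once more, noting that $((X^{\vee})^{\Gamma})^{\vee}$ is a left $\Gamma$-lattice containing $X$ and invoking the \emph{minimality} of $\Gamma X$. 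Thus the paper uses both extremal characterisations (minimality of $\Gamma X$ and maximality of $(X^{\vee})^{\Gamma}$) together with the double-dual identity, whereas your argument trades the minimality of $\Gamma X$ for an explicit description of its dual. Your version is slightly more hands-on and avoids the double-dual step; the paper's version avoids choosing an embedding into $V^{\vee}$. Both derive the index equality identically, from the fact recalled in \S\ref{subsec:duals-of-lattices} that dualising preserves generalised module indices.
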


\begin{proof}
We only prove part (i).
Since $(-)^{\vee}$ reverses inclusions,
$(\Gamma X)^{\vee}$ is a right $\Gamma$-lattice contained in $X^{\vee}$.
Hence $(\Gamma X)^{\vee}$ is contained in $(X^{\vee})^{\Gamma}$ by definition of the latter.	
Dualising, we also have that 
\begin{equation}\label{eq:dualisinglattices}  
\Gamma X=((\Gamma X)^{\vee})^{\vee} \supseteq ((X^{\vee})^{\Gamma})^{\vee}.
\end{equation}
Since $((X^{\vee})^{\Gamma})^{\vee}$ is itself a left $\Gamma$-lattice containing $X$, this forces equality in \eqref{eq:dualisinglattices} and hence $(\Gamma X)^{\vee}=(X^{\vee})^{\Gamma}$ 
as desired. 
Finally, since $(-)^{\vee}$ preserves indices (see \S \ref{subsec:duals-of-lattices}) we have that
$[\Gamma X: X]_{\mathcal{O}}
= [ X^{\vee} : (\Gamma X)^{\vee}]_{\mathcal{O}}
= [ X^{\vee} : (X^{\vee})^{\Gamma}]_{\mathcal{O}}$.
\end{proof}

\subsection{The commutative separable setting}
In the setting of commutative separable algebras, the following result of Fr{\"o}hlich
is a refinement of Corollary \ref{cor:bddbyconductor}.

\begin{theorem}[Fr\"ohlich \cite{MR0210697}]\label{thm:frohlich-commutative}
Let $A$ be a commutative separable finite-dimensional $K$-algebra and let
$\Lambda$ be an $\mathcal{O}$-order in $A$. 
Let $\mathcal{M}$ be the unique maximal $\mathcal{O}$-order in $A$.
Let $X$ be a $\Lambda$-lattice such that $KX$ is free of rank $n$ over $A$. 
Then both $[\mathcal{M} X : X]_{\mathcal{O}}$ and 
$[X : \leftindex^{\mathcal{M}}{X}]_{\mathcal{O}}$ divide $[\mathcal{M} : \Lambda]_{\mathcal{O}}^{n}$. 
\end{theorem}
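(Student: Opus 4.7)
The plan is to localise, use the duality of Section~\ref{subsec:duals-and-overorders} to collapse the two divisibility statements to one, and then exhibit a suitable free $\Lambda$-sublattice of $X$.

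Since generalised module indices are defined locally (see \S\ref{subsec:gen-mod-indices}), it suffices to treat the case where $\mathcal{O}$ is a discrete valuation ring. Then $A$, being commutative and separable over $K$, is a finite product of separable field extensions of $K$, and correspondingly $\mathcal{M}$ is the product of their rings of integers. Each such ring is a semilocal Dedekind domain, hence a principal ideal domain, so $\mathcal{M}$ itself is a PID. Next, Lemma~\ref{lem:dualsubover}(i) together with the commutativity of $A$ (which identifies left and right $\mathcal{M}$-lattices) gives
\[
[\mathcal{M}X : X]_{\mathcal{O}} \;=\; [X^{\vee} : \leftindex^{\mathcal{M}}{(X^{\vee})}]_{\mathcal{O}}.
\]
Moreover, because $A$ is commutative and separable, the trace pairing is nondegenerate and yields an $A$-module isomorphism $A \cong \Hom_{K}(A, K)$, so $K(X^{\vee}) \cong A^{n}$ and $X^{\vee}$ satisfies the same hypotheses as $X$. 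It therefore suffices to prove the single divisibility $[\mathcal{M}X : X]_{\mathcal{O}} \mid [\mathcal{M}:\Lambda]_{\mathcal{O}}^{n}$ for all such lattices, since applying this to $X^{\vee}$ then yields the corresponding divisibility for $[X : \leftindex^{\mathcal{M}}{X}]_{\mathcal{O}}$.

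For this remaining divisibility, the plan is to exhibit elements $\epsilon_{1}, \ldots, \epsilon_{n} \in X$ forming an $\mathcal{M}$-basis of the free $\mathcal{M}$-lattice $\mathcal{M}X$. Granting this, set $Y := \Lambda\epsilon_{1} \oplus \cdots \oplus \Lambda\epsilon_{n} \subseteq X$; this is a free $\Lambda$-sublattice of $X$ of rank $n$ satisfying $\mathcal{M}Y = \mathcal{M}\epsilon_{1} \oplus \cdots \oplus \mathcal{M}\epsilon_{n} = \mathcal{M}X$. Then $[\mathcal{M}X : Y]_{\mathcal{O}} = [\mathcal{M}Y : Y]_{\mathcal{O}} = [\mathcal{M}:\Lambda]_{\mathcal{O}}^{n}$, and multiplicativity of indices (\S\ref{subsec:gen-mod-indices}) gives
\[
[\mathcal{M}X : X]_{\mathcal{O}} \cdot [X : Y]_{\mathcal{O}} \;=\; [\mathcal{M}X : Y]_{\mathcal{O}} \;=\; [\mathcal{M}:\Lambda]_{\mathcal{O}}^{n},
\]
so $[\mathcal{M}X : X]_{\mathcal{O}}$ divides $[\mathcal{M}:\Lambda]_{\mathcal{O}}^{n}$.

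The main obstacle is the construction of $\epsilon_{1}, \ldots, \epsilon_{n} \in X$ forming an $\mathcal{M}$-basis of $\mathcal{M}X$. Since $\mathcal{M}X$ is free of rank $n$ over the semilocal PID $\mathcal{M}$, by Nakayama's lemma this amounts to finding elements of $X$ whose images modulo each maximal ideal $\mathfrak{m}$ of $\mathcal{M}$ form an $(\mathcal{M}/\mathfrak{m})$-basis of the $n$-dimensional vector space $\mathcal{M}X / \mathfrak{m}\mathcal{M}X$. At each individual $\mathfrak{m}$, the image of $X$ spans this vector space over $\mathcal{M}/\mathfrak{m}$ (because $\mathcal{M} \cdot X = \mathcal{M}X$), so it contains a basis; the subtlety is making a single choice of $\epsilon_{1}, \ldots, \epsilon_{n} \in X$ that simultaneously yields bases at all the finitely many maximal ideals of $\mathcal{M}$. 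This is achieved via a Chinese-remainder-type approximation argument that exploits the fact that $\mathcal{M}$ is semilocal. It is precisely the PID (and hence commutative) structure of $\mathcal{M}$ that makes this diagonalising construction possible, which is what distinguishes Fröhlich's refinement from the cruder conductor bound of Corollary~\ref{cor:bddbyconductor}.
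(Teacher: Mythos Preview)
Your duality reduction of the second divisibility to the first is correct and is exactly what the paper does: using Lemma~\ref{lem:dualsubover} and the trace isomorphism $A\cong\Hom_K(A,K)$ to pass from $X$ to $X^\vee$ is the paper's argument verbatim. The paper, however, does not attempt to reprove the first divisibility $[\mathcal{M}X:X]_{\mathcal{O}}\mid[\mathcal{M}:\Lambda]_{\mathcal{O}}^{n}$; it simply cites Fr\"ohlich's original paper \cite[Theorem~4]{MR0210697}.

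Your attempt to supply this missing argument has a genuine gap: the assertion that one can find $\epsilon_1,\dots,\epsilon_n\in X$ forming an $\mathcal{M}$-basis of $\mathcal{M}X$ is \emph{false} in general, so no Chinese-remainder-type patching can succeed. Here is a counterexample with $n=1$. Take $\mathcal{O}=\Z_{(2)}$, $A=\Q^3$, $\mathcal{M}=\mathcal{O}^3$, and
\[
X=\mathcal{O}(1,1,0)+\mathcal{O}(0,1,1)+\mathcal{O}(2,0,0)\subseteq\mathcal{M},
\]
with $\Lambda$ the associated order $\{(a,b,c)\in\mathcal{O}^3 : a\equiv b\equiv c\pmod 2\}$. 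Then $\mathcal{M}X=\mathcal{M}$ and $[\mathcal{M}:\Lambda]_{\mathcal{O}}=(4)$, and one checks $[\mathcal{M}X:X]_{\mathcal{O}}=(2)$, so the theorem holds. But a general element of $X$ has the form $(a+2c,\,a+b,\,b)$ with $a,b,c\in\mathcal{O}$; for this to lie in $\mathcal{M}^\times=(\mathcal{O}^\times)^3$ we would need $b$ odd and $a+b$ odd, forcing $a$ even and hence $a+2c$ even. Thus $X\cap\mathcal{M}^\times=\emptyset$ and no $\epsilon\in X$ generates $\mathcal{M}X$ over $\mathcal{M}$. Your argument would in fact prove the stronger (and false) statement that $[\mathcal{M}X:X]_{\mathcal{O}}\cdot[X:\Lambda\epsilon]_{\mathcal{O}}=[\mathcal{M}:\Lambda]_{\mathcal{O}}^n$ exactly; the theorem only claims divisibility, and Fr\"ohlich's proof proceeds by a different route.
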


\begin{proof}
The claim that $[\mathcal{M} X : X]_{\mathcal{O}}$ divides $[\mathcal{M} : \Lambda]_{\mathcal{O}}^{n}$ is contained in \cite[Theorem 4]{MR0210697}. 

It remains to show that $[X : \leftindex^{\mathcal{M}}{X}]_{\mathcal{O}}$ divides $[\mathcal{M} : \Lambda]_{\mathcal{O}}^{n}$.
Since $A$ is separable there is an isomorphism of (right) $A$-modules $A \cong \Hom_{K}(A,K)$
induced by the pairing of \cite[(7.41)]{MR632548}.
Thus there are $A$-module isomorphisms
\[
K(X^{\vee}) \cong \Hom_{K}(KX,K) \cong \Hom_{K}(A^{(n)},K)  \cong \Hom_{K}(A,K)^{(n)} \cong A^{(n)}.
\]
Lemma \ref{lem:dualsubover}(ii) implies that
$[X : \leftindex^{\mathcal{M}}{X}]_{\mathcal{O}}
= [X^{\vee}\mathcal{M} : X^{\vee}]_{\mathcal{O}} =  [\mathcal{M} X^{\vee}: X^{\vee}]_{\mathcal{O}}$,
where in the last equality, we consider $X^{\vee}$ as a left $\mathcal{M}$-lattice, as we may since
$\mathcal{M}$ is commutative.
Moreover, since $K(X^{\vee})$ is free of rank $n$ over $A$, the first claim and the appropriate 
substitution imply that $[\mathcal{M} X^{\vee}: X^{\vee}]_{\mathcal{O}}$ divides 
$[\mathcal{M} : \Lambda]_{\mathcal{O}}^{n}$.
\end{proof}

\begin{remark}
\cite[\S 7, Example 1]{MR274497} shows that the result analogous to Theorem \ref{thm:frohlich-commutative} does not always hold in the noncommutative separable setting.
\end{remark}

\section{Free sublattices of bounded index}\label{sec:free-sublattices-of-bounded-index}

Let $\mathcal{O}$ be a Dedekind domain with field of fractions $K$.
Assume that $K$ is a global field and that $\mathcal{O} \neq K$.
Let $A$ be a separable finite-dimensional $K$-algebra. 

\subsection{Free sublattices of locally free lattices}
The following result gives a bound on the index of a free sublattice in a locally free lattice.

\begin{prop}\label{prop:free-sublattice-of-lf}
Let $\Gamma$ be an $\mathcal{O}$-order in $A$ and 
let $\mathcal{K}$ be any nonzero ideal of $\mathcal{O}$. 
Then there exists a nonzero ideal $\mathcal{I}$ of $\mathcal{O}$,
that can be chosen to be coprime to $\mathcal{K}$, with the following property:
for every locally free $\Gamma$-lattice $X$,
there exists a free $\Gamma$-sublattice $Y$ of $X$ 
such that $[X:Y]_{\mathcal{O}}$ divides $\mathcal{I}$.
\end{prop}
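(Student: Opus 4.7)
The plan is to reduce to locally free $\Gamma$-lattices of rank one and then combine Jordan--Zassenhaus finiteness with Roiter's lemma. First, I would reduce to the case that $X$ has rank one: by \cite[(31.14)]{MR632548} (as recalled in \S \ref{subsec:lfcg-sfc}), any locally free $\Gamma$-lattice $X$ of rank $n$ is isomorphic to $\Gamma^{(n-1)} \oplus X_{0}$ for some locally free rank-one $\Gamma$-lattice $X_{0}$; and if $Y_{0} \subseteq X_{0}$ is a free $\Gamma$-sublattice of rank one, then $\Gamma^{(n-1)} \oplus Y_{0}$ is a free sublattice of $\Gamma^{(n-1)} \oplus X_{0}$ with the same generalised index by Lemma \ref{lem:index-diagram}:
\[
[\Gamma^{(n-1)} \oplus X_{0} : \Gamma^{(n-1)} \oplus Y_{0}]_{\OO} = [X_{0} : Y_{0}]_{\OO}.
\]
Transferring under the isomorphism, a bound in the rank-one case suffices.

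Second, I would invoke the Jordan--Zassenhaus theorem \cite[(26.4)]{MR1972204} to observe that the locally free rank-one $\Gamma$-lattices form only finitely many isomorphism classes $X^{(1)}, \ldots, X^{(s)}$. If for each $i$ one can exhibit a free $\Gamma$-sublattice $Y^{(i)} \subseteq X^{(i)}$ whose index $\mathcal{I}_{i} := [X^{(i)} : Y^{(i)}]_{\OO}$ is coprime to $\mathcal{K}$, then $\mathcal{I} := \prod_{i=1}^{s} \mathcal{I}_{i}$ is an ideal coprime to $\mathcal{K}$ which bounds the index for every locally free rank-one lattice, and hence, combined with the previous step, for every locally free lattice.

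Third, I would construct the $Y^{(i)}$ by appealing to Roiter's lemma. Since $X^{(i)}$ is locally free of rank one over $\Gamma$, it lies in the same genus as $\Gamma$ itself: $X^{(i)}_{\mathfrak{p}} \cong \Gamma_{\mathfrak{p}}$ at every maximal ideal $\mathfrak{p}$, and $KX^{(i)} \cong A$ as $A$-modules. Roiter's lemma then produces a $\Gamma$-embedding $\phi \colon \Gamma \hookrightarrow X^{(i)}$ whose image $Y^{(i)} := \phi(\Gamma)$ is a free sublattice of $X^{(i)}$ with $X^{(i)}/Y^{(i)}$ annihilated by an $\OO$-ideal coprime to $\mathcal{K}$, whence $[X^{(i)}:Y^{(i)}]_{\OO}$ itself is coprime to $\mathcal{K}$.

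The main obstacle is the coprimality condition in this last step: without it the statement would be a routine corollary of the structure result \cite[(31.14)]{MR632548}, Jordan--Zassenhaus, and Lemma \ref{lem:index-diagram}, but ensuring that each index $[X^{(i)}:Y^{(i)}]_{\OO}$ avoids the prescribed primes dividing $\mathcal{K}$ is precisely the content of the approximation statement provided by Roiter's lemma.
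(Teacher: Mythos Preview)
Your proposal is correct and follows essentially the same strategy as the paper: reduce to rank one via \cite[(31.14)]{MR632548}, invoke Jordan--Zassenhaus for finiteness of isomorphism classes, and use Roiter's lemma to arrange the index coprime to $\mathcal{K}$. The only cosmetic difference is the direction in which Roiter's lemma is applied---you embed $\Gamma \hookrightarrow X^{(i)}$ to obtain the free sublattice directly, whereas the paper records the embedding as $\iota_{W}\colon W \hookrightarrow \Gamma$; your orientation is the more immediately applicable one, but the content is the same.
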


\begin{proof}
By \cite[(31.14)]{MR632548}, for a positive integer $n$ and a locally free $\Gamma$-lattice $X$
of rank $n$, there exists a locally free $\Gamma$-lattice $W$ of rank $1$
such that $X \cong \Gamma^{(n-1)} \oplus W$. 
Thus the problem reduces to the case of locally free $\Lambda$-lattices $W$ of rank $1$.
The number of isomorphism classes of such lattices is 
finite by the Jordan--Zassenhaus theorem \cite[(26.4)]{MR1972204}. 
For each such class, choose a representative $W$ and note that 
by Roiter's lemma \cite[(31.6)]{MR632548} there exists an embedding 
$\iota_{W} : \Gamma \hookrightarrow W$ such that 
$[W : \iota_{W}(\Gamma)]_{\mathcal{O}}$ is coprime to $\mathcal{K}$.
Now take $\mathcal{I}$ to be any common multiple of the (finite number of) ideals 
$[W : \iota_{W}(\Gamma)]_{\mathcal{O}}$ as $W$ varies.
\end{proof}

\begin{corollary}\label{cor:free-sublattice-of-lf-max-case}
Let $\mathcal{M}$ be a maximal $\mathcal{O}$-order in $A$
and let $\mathcal{K}$ be any nonzero ideal of $\mathcal{O}$. 
Then there exists a nonzero ideal $\mathcal{I}$ of $\mathcal{O}$,
that can be chosen to be coprime to $\mathcal{K}$, with the following property:
given any $\mathcal{M}$-lattice $X$ such that $KX$ is free as an $A$-module,
there exists a free $\mathcal{M}$-sublattice $Y$ of $X$ 
such that $[X:Y]_{\mathcal{O}}$ divides $\mathcal{I}$. 
\end{corollary}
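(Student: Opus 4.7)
The plan is to deduce this by specialising Proposition \ref{prop:free-sublattice-of-lf} to the case $\Gamma = \mathcal{M}$. The key observation is that, for a maximal $\mathcal{O}$-order in a separable $K$-algebra, the hypothesis that $KX$ is free of rank $n$ over $A$ is equivalent to $X$ being locally free of rank $n$ over $\mathcal{M}$; this equivalence is recorded in \S \ref{subsec:max-orders}. Thus the apparently weaker hypothesis of the corollary already forces local freeness, which is precisely the input Proposition \ref{prop:free-sublattice-of-lf} requires.

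Concretely, I would proceed as follows. First, apply Proposition \ref{prop:free-sublattice-of-lf} with $\Gamma = \mathcal{M}$ and the given ideal $\mathcal{K}$ to obtain a non-zero ideal $\mathcal{I}$ of $\mathcal{O}$, coprime to $\mathcal{K}$, such that every locally free $\mathcal{M}$-lattice admits a free $\mathcal{M}$-sublattice of index dividing $\mathcal{I}$. Second, given any $\mathcal{M}$-lattice $X$ with $KX$ free over $A$, invoke the equivalence from \S \ref{subsec:max-orders} to conclude that $X$ is locally free over $\mathcal{M}$. Proposition \ref{prop:free-sublattice-of-lf} then produces the desired free sublattice $Y \subseteq X$ with $[X:Y]_{\mathcal{O}}$ dividing $\mathcal{I}$, and the same $\mathcal{I}$ works uniformly for every such $X$.

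There is essentially no obstacle here: once Proposition \ref{prop:free-sublattice-of-lf} is in hand, the corollary is a formal specialisation, and the only additional ingredient is the standard fact (via \cite[(31.2)(iii)]{MR632548}) that over a maximal order in a separable $K$-algebra, $A$-freeness of $KX$ is equivalent to local freeness of $X$.
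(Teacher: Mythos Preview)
Your proposal is correct and follows essentially the same approach as the paper: both reduce to Proposition~\ref{prop:free-sublattice-of-lf} by invoking the equivalence from \S\ref{subsec:max-orders} between $KX$ being free over $A$ and $X$ being locally free over the maximal order $\mathcal{M}$. The paper phrases the equivalence in terms of $\mathcal{M}X$, but since $X$ is already an $\mathcal{M}$-lattice this is just $X$, so there is no real difference.
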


\begin{proof}
Let $X$ be an $\mathcal{M}$-lattice. 
Then $KX$ is free as an $A$-module if and only if 
$\mathcal{M}X$ is locally free over $ \mathcal{M}$ 
(see \S \ref{subsec:max-orders}).
Hence the result follows from Proposition \ref{prop:free-sublattice-of-lf}.
\end{proof}

\begin{remark}\label{rmk:I-trivial}
If $\Gamma$ (resp.\ $\mathcal{M}$) satisfies the equivalent conditions of 
Lemma \ref{lem:equiv-locally-free-implies-free},
then it is clear that we can take $\mathcal{I}=\mathcal{O}$ in 
Proposition \ref{prop:free-sublattice-of-lf} (resp.\ Corollary \ref{cor:free-sublattice-of-lf-max-case}).
\end{remark}

Given a finite set $S$ of maximal ideals of $\mathcal{O}$, let 
$\mathcal{O}_{S} = \bigcap_{\mathfrak{p} \notin S} \mathcal{O}_{\mathfrak{p}}$,
where $\mathfrak{p}$ ranges over all maximal ideals of $\mathcal{O}$ not in $S$.
We include the following result for general interest. 

\begin{corollary}\label{cor:choice-of-S}
Let $\Gamma$ be an $\mathcal{O}$-order in $A$ and 
let $T$ be a finite set of maximal ideals of $\mathcal{O}$.
Then there exists a finite set $S$ of maximal ideals of $\mathcal{O}$ such that 
$S \cap T = \emptyset$
and $\mathcal{O}_{S} \otimes_{\mathcal{O}} \Gamma$ 
satisfies the equivalent conditions of Lemma \ref{lem:equiv-locally-free-implies-free}.
\end{corollary}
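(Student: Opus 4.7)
The plan is to reduce to Proposition \ref{prop:free-sublattice-of-lf} by choosing $S$ to be the support of the ideal $\mathcal{I}$ produced there. First, let $\mathcal{K}$ be any nonzero ideal of $\mathcal{O}$ whose support is $T$ (for instance $\mathcal{K} = \prod_{\mathfrak{p} \in T} \mathfrak{p}$), and let $\mathcal{I}$ be the ideal furnished by Proposition \ref{prop:free-sublattice-of-lf}, chosen coprime to $\mathcal{K}$. Take $S$ to be the finite set of maximal ideals in the support of $\mathcal{I}$; then $S \cap T = \emptyset$ by construction. Writing $\Gamma_S := \mathcal{O}_S \otimes_{\mathcal{O}} \Gamma$, by condition (i) of Lemma \ref{lem:equiv-locally-free-implies-free} it suffices to show that every locally free $\Gamma_S$-lattice is in fact free.

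Let $M$ be a locally free $\Gamma_S$-lattice of rank $n$. The key step is to exhibit $M$ as the $S$-localization $X_S$ of a locally free $\Gamma$-lattice $X$ of rank $n$. To this end, I would first pick generators $m_1, \ldots, m_k$ of $M$ over $\Gamma_S$ so that $X_0 := \Gamma m_1 + \cdots + \Gamma m_k$ is a full $\Gamma$-lattice in $V := KM$ satisfying $(X_0)_S = M$. Then, after fixing an $A$-isomorphism $V \cong A^{(n)}$, I would modify $X_0$ at the finitely many primes in $S$ by defining a family $\{X_{\mathfrak{p}}\}_{\mathfrak{p}}$ via $X_{\mathfrak{p}} := (X_0)_{\mathfrak{p}} = M_{\mathfrak{p}}$ for $\mathfrak{p} \notin S$ and $X_{\mathfrak{p}} := \Gamma_{\mathfrak{p}}^{(n)}$ for $\mathfrak{p} \in S$. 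Since this family differs from that of $X_0$ at only finitely many primes, the standard gluing principle for lattices over Dedekind domains recalled in \S \ref{subsec:lattices-Dedekind} produces a $\Gamma$-lattice $X := \bigcap_{\mathfrak{p}} X_{\mathfrak{p}}$ in $V$ realizing these localizations. By construction $X_{\mathfrak{p}}$ is free of rank $n$ over $\Gamma_{\mathfrak{p}}$ for every $\mathfrak{p}$, so $X$ is locally free of rank $n$, and $X_S = M$.

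Finally, Proposition \ref{prop:free-sublattice-of-lf} applied to $X$ yields a free $\Gamma$-sublattice $Y \subseteq X$ with $[X : Y]_{\mathcal{O}}$ dividing $\mathcal{I}$. Localizing at $S$ gives $Y_S \subseteq X_S = M$ and $[X_S : Y_S]_{\mathcal{O}_S} = [X:Y]_{\mathcal{O}} \cdot \mathcal{O}_S$ divides $\mathcal{I} \cdot \mathcal{O}_S = \mathcal{O}_S$, the last equality holding because $S$ is the support of $\mathcal{I}$. Hence $Y_S = M$, and since $Y$ is free over $\Gamma$, the module $M \cong Y_S$ is free over $\Gamma_S$, establishing condition (i) of Lemma \ref{lem:equiv-locally-free-implies-free} and completing the plan. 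The step I expect to be the main obstacle is the realization of an abstract locally free $\Gamma_S$-lattice $M$ as the $S$-localization of a locally free $\Gamma$-lattice $X$: one must check carefully that the local modifications at primes in $S$ assemble into a genuine $\Gamma$-lattice in $V$ whose $S$-localization recovers $M$ and whose localizations are free of the correct rank everywhere.
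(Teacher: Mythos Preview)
Your proposal is correct and follows exactly the same approach as the paper: take $\mathcal{K}$ supported on $T$, apply Proposition~\ref{prop:free-sublattice-of-lf} to obtain $\mathcal{I}$ coprime to $\mathcal{K}$, and let $S$ be the set of primes dividing $\mathcal{I}$. The paper's proof is a one-line sketch that stops there, whereas you go on to spell out in full the verification that every locally free $\Gamma_S$-lattice is free (realising $M$ as $X_S$ for a locally free $\Gamma$-lattice $X$ via local modification, then localising the conclusion of Proposition~\ref{prop:free-sublattice-of-lf}); these are precisely the details implicitly left to the reader, and your treatment of the gluing step is correct.
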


\begin{proof}
If $\mathcal{K}$ is the product of the maximal ideals in $T$ and
$\mathcal{I}$ is the ideal given by Proposition~\ref{prop:free-sublattice-of-lf},
then we can take $S$ to be the set of maximal ideals dividing $\mathcal{I}$.
\end{proof}

\subsection{The main theorem}
The main theorem of the present article is as follows.

\begin{theorem}\label{thm:main}
Let $\mathcal{O}$ be a Dedekind domain with field of fractions $K$. 
Assume that $K$ is a global field and that $\mathcal{O} \neq K$.
Let $A$ be a separable finite-dimensional $K$-algebra and let $\Lambda$ be an $\mathcal{O}$-order in $A$.
Let $\mathcal{M}$ be a maximal $\mathcal{O}$-order in $A$ containing $\Lambda$ and
let $J$ be any full two-sided ideal of $\mathcal{M}$ contained in $\Lambda$.
Let $\mathcal{K}$ be any nonzero ideal of $\mathcal{O}$.
Then there exists a nonzero ideal $\mathcal{I}$ of $\mathcal{O}$,
that can be chosen to be coprime to $\mathcal{K}$, with the following property:
given any $\Lambda$-lattice $X$  such that $KX$ is free of rank $n$ over $A$,
there exists a free $\Lambda$-sublattice $Z$ of $X$ such that 
$[X : Z]_{\mathcal{O}}$ divides 
$\mathcal{I} \cdot [\mathcal{M} : \Lambda]_{\mathcal{O}}^{2n}$
if $A$ is commutative or
$\mathcal{I} \cdot [\mathcal{M} : J]_{\mathcal{O}}^{n} 
\cdot [\mathcal{M} : \Lambda]_{\mathcal{O}}^{n}$ otherwise.
Moreover, if $\mathcal{M}$ satisfies the equivalent conditions of 
Lemma \ref{lem:equiv-locally-free-implies-free}, then we can take $\mathcal{I}=\mathcal{O}$.
\end{theorem}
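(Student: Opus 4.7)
The plan is to construct the free sublattice $Z$ by passing through the maximal order $\MM$ in three stages, and to bound the overall index as the product of three indices, each of which has already been controlled in earlier results of the paper.

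First I would fix a $\Lambda$-lattice $X$ with $KX$ free of rank $n$ over $A$ and consider the sandwich
\[
Z \subseteq Y \subseteq \leftindex^{\MM}{X} \subseteq X,
\]
where $\leftindex^{\MM}{X}$ is the (unique) maximal $\MM$-sublattice of $X$ defined in Section \ref{sec:over-and-sublattices}. Since $K(\leftindex^{\MM}{X}) = KX$ is free of rank $n$ over $A$ and $\MM$ is maximal, $\leftindex^{\MM}{X}$ is locally free of rank $n$ over $\MM$ (see \S \ref{subsec:max-orders}). I would then invoke Corollary \ref{cor:free-sublattice-of-lf-max-case} applied to $\leftindex^{\MM}{X}$: for any prescribed $\mathcal{K}$ there exists a non-zero ideal $\mathcal{I}$ of $\mathcal{O}$ coprime to $\mathcal{K}$ and a free $\MM$-sublattice $Y = \MM\varepsilon_{1}\oplus\cdots\oplus\MM\varepsilon_{n}$ of $\leftindex^{\MM}{X}$ with $[\leftindex^{\MM}{X}:Y]_{\mathcal{O}}$ dividing $\mathcal{I}$; the $\mathcal{I}$ produced does not depend on $X$. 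Finally I would take $Z := \Lambda\varepsilon_{1}\oplus\cdots\oplus\Lambda\varepsilon_{n}$, which is free of rank $n$ over $\Lambda$.

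The index now factors, by the multiplicativity of generalised module indices recalled in \S \ref{subsec:gen-mod-indices}, as
\[
[X:Z]_{\mathcal{O}} \;=\; [X:\leftindex^{\MM}{X}]_{\mathcal{O}}\cdot[\leftindex^{\MM}{X}:Y]_{\mathcal{O}}\cdot[Y:Z]_{\mathcal{O}}.
\]
The middle factor divides $\mathcal{I}$ by construction. For the rightmost factor, compatibility of indices with direct sums gives $[Y:Z]_{\mathcal{O}} = [\MM:\Lambda]_{\mathcal{O}}^{n}$. For the leftmost factor I would use the chain $\leftindex^{\MM}{X}\subseteq X\subseteq \MM X$, which shows $[X:\leftindex^{\MM}{X}]_{\mathcal{O}}$ divides $[\MM X:\leftindex^{\MM}{X}]_{\mathcal{O}}$, and then apply Corollary \ref{cor:bddbyconductor} to bound the latter by $[\MM:J]_{\mathcal{O}}^{n}$. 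Combining the three bounds yields $[X:Z]_{\mathcal{O}} \mid \mathcal{I}\cdot[\MM:J]_{\mathcal{O}}^{n}\cdot[\MM:\Lambda]_{\mathcal{O}}^{n}$, which is the asserted bound in the general (possibly non-commutative) case.

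In the commutative case I would refine the estimate on the leftmost factor by appealing to Theorem \ref{thm:frohlich-commutative} (Fr\"ohlich), which gives directly that $[X:\leftindex^{\MM}{X}]_{\mathcal{O}}$ divides $[\MM:\Lambda]_{\mathcal{O}}^{n}$, replacing the $[\MM:J]_{\mathcal{O}}^{n}$ factor and yielding the product $\mathcal{I}\cdot[\MM:\Lambda]_{\mathcal{O}}^{2n}$. The last assertion is immediate: if $\MM$ satisfies the equivalent conditions of Lemma \ref{lem:equiv-locally-free-implies-free} then by Remark \ref{rmk:I-trivial} the ideal $\mathcal{I}$ in Corollary \ref{cor:free-sublattice-of-lf-max-case} can be taken to be $\mathcal{O}$.

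There is no real obstacle here: each of the three indices has been analysed in a previous section and the proof amounts to assembling these ingredients. The only minor point requiring care is justifying that one obtains a single universal $\mathcal{I}$, not one depending on $X$; this follows because Corollary \ref{cor:free-sublattice-of-lf-max-case} already produces such an $\mathcal{I}$ uniformly over all locally free $\MM$-lattices of a given rank, via the Jordan--Zassenhaus finiteness and Roiter's lemma already invoked in its proof.
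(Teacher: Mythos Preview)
Your proposal is correct and follows essentially the same route as the paper's own proof: the same sandwich $Z\subseteq Y\subseteq\leftindex^{\MM}{X}\subseteq X$, the same three-factor decomposition of $[X:Z]_{\mathcal{O}}$, and the same appeals to Corollary~\ref{cor:free-sublattice-of-lf-max-case}, Corollary~\ref{cor:bddbyconductor}, Theorem~\ref{thm:frohlich-commutative}, and Remark~\ref{rmk:I-trivial}. The only cosmetic difference is that you spell out the intermediate step $[X:\leftindex^{\MM}{X}]_{\mathcal{O}}\mid[\MM X:\leftindex^{\MM}{X}]_{\mathcal{O}}$ before invoking Corollary~\ref{cor:bddbyconductor}, whereas the paper cites that corollary directly for the divisibility of $[X:\leftindex^{\MM}{X}]_{\mathcal{O}}$.
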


\begin{proof}
Let $\mathcal{I}$ be the ideal of $\mathcal{O}$ given by Corollary \ref{cor:free-sublattice-of-lf-max-case}.
If $\mathcal{M}$ satisfies the equivalent conditions of Lemma \ref{lem:equiv-locally-free-implies-free},
then we can take  $\mathcal{I}=\mathcal{O}$ by Remark \ref{rmk:I-trivial}.
Then there exists a free $\mathcal{M}$-sublattice $Y$ of $\leftindex^{\mathcal{M}}{X}$
such that $[\leftindex^{\mathcal{M}}{X} : Y]_{\mathcal{O}}$ divides $\mathcal{I}$.
Let $\varepsilon_{1}, \ldots, \varepsilon_{n}$ be a free $\mathcal{M}$-basis of $Y$,
so that $Y=\mathcal{M} \varepsilon_{1} \oplus \cdots \oplus \mathcal{M} \varepsilon_{n}$,
and let $Z=\Lambda \varepsilon_{1} \oplus \cdots \oplus \Lambda \varepsilon_{n}$.
Then 
$Z \subseteq Y \subseteq \leftindex^{\mathcal{M}}{X} \subseteq X$ and
$[X: Z]_{\mathcal{O}}
=
[X : \leftindex^{\mathcal{M}}{X}]_{\mathcal{O}}
\cdot [ \leftindex^{\mathcal{M}}{X} : Y]_{\mathcal{O}} \cdot [Y:Z]_{\mathcal{O}}$.
Note that $[Y:Z]_{\mathcal{O}} =  [\mathcal{M} : \Lambda]_{\mathcal{O}}^{n}$.
Moreover, Corollary \ref{cor:bddbyconductor} implies that $[X :  \leftindex^{\mathcal{M}}{X}]_{\mathcal{O}}$ divides $[\mathcal{M} : J]_{\mathcal{O}}^{n}$, and 
under the assumption that $A$ is commutative, Theorem \ref{thm:frohlich-commutative}
implies that in fact $[X :  \leftindex^{\mathcal{M}}{X}]_{\mathcal{O}}$ divides $[\mathcal{M} : \Lambda]_{\mathcal{O}}^{n}$.
Therefore we obtain the desired result.
\end{proof}

\begin{remark}\label{rmk:non-free}
The statement of Theorem \ref{thm:main} extends to $\Lambda$-lattices $X$ admitting a surjection
$A^{(n)} \twoheadrightarrow KX$ of $A$-modules.
More specifically, the ideal $\mathcal{I}$ has the following property: 
given any 
$\Lambda$-lattice $X$ admitting a surjection
$A^{(n)} \twoheadrightarrow KX$ of $A$-modules,
there exists a $\Lambda$-sublattice $Z$ of $X$ generated by at most $n$ elements such that 
$[X : Z]_{\mathcal{O}}$ divides \linebreak[4]
$\mathcal{I} \cdot [\mathcal{M} : J]_{\mathcal{O}}^{n} 
\cdot [\mathcal{M} : \Lambda]_{\mathcal{O}}^{n}$.
This can be seen as follows. 
There exists an $A$-module $B$ such that $KX \oplus B \cong A^{(n)}$. 
Thus given any full $\Lambda$-lattice $W$ in $B$, the $\Lambda$-lattice $X \oplus W$ satisfies the 
conditions of Theorem \ref{thm:main} and so admits a free $\Lambda$-sublattice $Z'$ of index dividing
$\mathcal{I} \cdot [\mathcal{M} : J]_{\mathcal{O}}^{n} 
\cdot [\mathcal{M} : \Lambda]_{\mathcal{O}}^{n}$, and the image of $Z'$ under the projection $X \oplus W \twoheadrightarrow X$ is the desired sublattice $Z$.
Of course, one should expect stronger bounds if one specifies the isomorphism class of $KX$; 
one such situation is considered in \S \ref{sec:group-rings-modulo-trace}.
\end{remark}

\section{Group rings}\label{sec:group-rings}

\subsection{Conductors of group rings}
The extra structure of group rings is exploited in the following result,
which will allow us to make an optimal choice of the two-sided ideal $J$ that appears 
in the statement of Theorem \ref{thm:main}.

\begin{prop}\label{prop:index-in-two-orders}
Let $\mathcal{O}$ be a Dedekind domain with field of fractions $K\neq \OO$. 
Let $G$ be a finite group and let $\Gamma$ be an $\mathcal{O}$-order in $K[G]$ containing
$\mathcal{O}[G]$.
Then 
${\mathcal{O}[G]}^{\Gamma} = \leftindex^{\Gamma}{\mathcal{O}[G]}$ 
and we have
\begin{equation}\label{eq:index-max-eq}
[\Gamma : \mathcal{O}[G]]_{\mathcal{O}} 
= [\mathcal{O}[G] : \leftindex^{\Gamma}{\mathcal{O}[G]}]_{\mathcal{O}}
= [\Gamma : \leftindex^{\Gamma}{\mathcal{O}[G]} ]_{\mathcal{O}}^{\frac{1}{2}}. 
\end{equation}
Moreover, if $|G|$ is invertible in $K$ and $\Gamma=\MM$ is a maximal $\OO$-order, then this index is independent of the choice of $\mathcal{M}$.
\end{prop}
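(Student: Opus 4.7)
The plan is to exploit the self-duality of the group ring. The $\mathcal{O}$-bilinear form $b(x,y) := [xy]_{1_G}$ on $\mathcal{O}[G]$ (the coefficient of the identity in the product) is symmetric, non-degenerate (the bases $\{g\}_{g \in G}$ and $\{g^{-1}\}_{g \in G}$ are $b$-dual) and associative, so it induces a canonical $\mathcal{O}[G]$-bimodule isomorphism $\phi \colon \mathcal{O}[G] \xrightarrow{\sim} \mathcal{O}[G]^{\vee}$. This identification is what will allow left and right conductors to be exchanged and the indices to be compared.

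I would first deduce the equality $\mathcal{O}[G]^{\Gamma} = \leftindex^{\Gamma}{\mathcal{O}[G]}$ by applying Lemma \ref{lem:dualsubover} with $\Lambda = \mathcal{O}[G]$ and $X = \mathcal{O}[G]$. Since $\mathcal{O}[G] \subseteq \Gamma$ and $1 \in \mathcal{O}[G]$ we have $\Gamma \cdot \mathcal{O}[G] = \Gamma = \mathcal{O}[G] \cdot \Gamma$, so part (i) gives $\Gamma^{\vee} = (\mathcal{O}[G]^{\vee})^{\Gamma}$ and part (ii) gives $\Gamma^{\vee} = \leftindex^{\Gamma}{\mathcal{O}[G]^{\vee}}$, both as subsets of $\mathcal{O}[G]^{\vee}$. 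Because $\phi$ is a bimodule isomorphism, $\phi^{-1}$ sends the largest right (resp.\ left) $\Gamma$-sublattice of $\mathcal{O}[G]^{\vee}$ to the largest right (resp.\ left) $\Gamma$-sublattice of $\mathcal{O}[G]$, so pulling back through $\phi$ identifies both $\mathcal{O}[G]^{\Gamma}$ and $\leftindex^{\Gamma}{\mathcal{O}[G]}$ with $\phi^{-1}(\Gamma^{\vee}) \subseteq \mathcal{O}[G]$. For the first index equality in \eqref{eq:index-max-eq}, I would combine the dual-index formula from \S\ref{subsec:duals-of-lattices}, namely $[\Gamma : \mathcal{O}[G]]_{\mathcal{O}} = [\mathcal{O}[G]^{\vee} : \Gamma^{\vee}]_{\mathcal{O}}$, with the fact that the $\mathcal{O}$-linear isomorphism $\phi$ preserves generalised module indices of full lattices, and then substitute $\phi^{-1}(\Gamma^{\vee}) = \leftindex^{\Gamma}{\mathcal{O}[G]}$. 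The square-root identity is then immediate from multiplicativity of the index along the chain $\leftindex^{\Gamma}{\mathcal{O}[G]} \subseteq \mathcal{O}[G] \subseteq \Gamma$, which yields $[\Gamma : \leftindex^{\Gamma}{\mathcal{O}[G]}]_{\mathcal{O}} = [\Gamma : \mathcal{O}[G]]_{\mathcal{O}}^2$.

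For the \emph{moreover} clause I would argue via discriminants. If $|G|$ is invertible in $K$ then $K[G]$ is separable by Maschke's theorem, so the trace form $T(x,y) := \mathrm{tr}_{K[G]/K}(xy)$ is non-degenerate and the $\mathcal{O}$-discriminant $\mathfrak{d}(\Lambda/\mathcal{O})$ of any $\mathcal{O}$-order $\Lambda \subseteq K[G]$ is a well-defined fractional ideal satisfying $\mathfrak{d}(\Lambda_1/\mathcal{O}) = [\Lambda_2 : \Lambda_1]_{\mathcal{O}}^{2} \cdot \mathfrak{d}(\Lambda_2/\mathcal{O})$ whenever $\Lambda_1 \subseteq \Lambda_2$ are full $\mathcal{O}$-lattices in $K[G]$. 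Applied to $\mathcal{O}[G] \subseteq \mathcal{M}$, this gives $\mathfrak{d}(\mathcal{O}[G]/\mathcal{O}) = [\mathcal{M} : \mathcal{O}[G]]_{\mathcal{O}}^{2} \cdot \mathfrak{d}(\mathcal{M}/\mathcal{O})$. I would then invoke the classical fact that any two maximal $\mathcal{O}$-orders in a separable $K$-algebra share a common discriminant (a local statement that reduces, via completion at each prime, to the conjugacy of maximal orders in central simple algebras over local fields together with the observation that inner automorphisms have reduced norm one and hence preserve $\mathcal{O}$-discriminants); this forces $[\mathcal{M} : \mathcal{O}[G]]_{\mathcal{O}}^{2}$, and hence $[\mathcal{M} : \mathcal{O}[G]]_{\mathcal{O}}$ itself, to be independent of the chosen $\mathcal{M}$. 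The main obstacle I anticipate is the careful bookkeeping required to verify that the left and right isomorphisms of Lemma \ref{lem:dualsubover} really do match up consistently under the single bimodule isomorphism $\phi$; every other step is a formal consequence of material already developed.
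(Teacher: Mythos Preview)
Your proof is correct and follows the same strategy as the paper: both exploit the self-duality of $\mathcal{O}[G]$ together with Lemma~\ref{lem:dualsubover} applied to $X=\mathcal{O}[G]$, and both finish with a discriminant comparison for the independence claim. The only difference is packaging---the paper encodes self-duality via the involution $g\mapsto g^{-1}$ (introducing $\Gamma^{\opp}$ and the operation $X^{*}$, then invoking the explicit isomorphism $\one_{g}\mapsto g$), whereas you use the symmetric Frobenius form $b(x,y)=[xy]_{1_{G}}$ to obtain the bimodule isomorphism $\phi$ directly, which sidesteps the $(-)^{\opp}$ bookkeeping but is mathematically equivalent.
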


\begin{remark}
In the case that $|G|$ is invertible in $K$ and
$\Gamma=\mathcal{M}$ is a maximal $\mathcal{O}$-order,
Jacobinski has given an explicit description of
${\mathcal{O}[G]}^{\Gamma} = \leftindex^{\Gamma}{\mathcal{O}[G]}$ 
(see Theorem \ref{thm:Jacobinski}) and this leads to an
explicit formula for the index of \eqref{eq:index-max-eq} (see Corollary \ref{cor:index-formula}).
\end{remark}

\begin{proof}[Proof of Proposition \ref{prop:index-in-two-orders}]
Given an $\mathcal{O}$-order $\Lambda$ in $K[G]$,
let $\Lambda^{\opp}$ denote the $\mathcal{O}$-order defined by the
image of $\Lambda$ under the involution on $K[G]$
induced by $g \mapsto g^{-1}$.
Any left (resp.\ right) $\Lambda$-lattice carries a canonical structure of a right (resp.\ left) 
$\Lambda^{\opp}$-lattice with $g^{-1}$ acting as $g$ did previously.
Given a left (resp.\ right) $\Lambda$-lattice $X$, we denote by $X^{*}$ the dual 
lattice $X^{\vee} = \Hom_{\mathcal{O}}(X, \mathcal{O})$ considered as a left (resp.\ right) $\Lambda^{\opp}$-lattice. 
Note that for a left $\Lambda$-lattice $X$, we have 
$[(X^{\vee})\Lambda : X^{\vee}]_{\mathcal{O}} = [\Lambda X^{*} : X^{*}]_{\mathcal{O}}$, etc.

Now observe that $\Gamma^{\opp}$ is an $\mathcal{O}$-order containing 
$\mathcal{O}[G]=\mathcal{O}[G]^{\opp}$.
Hence
$\Gamma^{\opp}\mathcal{O}[G]
=\Gamma^{\opp}
=\mathcal{O}[G]\Gamma^{\opp}$.
We also have that
\begin{align*}
(\Gamma^{\opp}\mathcal{O}[G])^{\vee}&=(\mathcal{O}[G]^{\vee})^{\Gamma^{\opp}}= \leftindex^{\Gamma}{(\mathcal{O}[G]}^{*}), \\
(\mathcal{O}[G]\Gamma^{\opp})^{\vee}&=\leftindex^{\Gamma^{{\opp}}}{(\OO[G]^{\vee})}=(\mathcal{O}[G]^{*})^{\Gamma}, 
\end{align*}
where in each case the first equality follows from 
Lemma \ref{lem:dualsubover} and the second equality follows from the definition of $(-)^{*}$. 
Therefore $\leftindex^{\Gamma}{(\mathcal{O}[G]}^{*}) = (\mathcal{O}[G]^{*})^{\Gamma}$.
Furthermore, there is an $\mathcal{O}[G]=\mathcal{O}[G]^{\opp}$-isomorphism 
$\mathcal{O}[G]^{*} \overset{\sim}{\to} \mathcal{O}[G]$ given by $\one_{g} \mapsto g$, where $\one_{g}$ denotes the element of $\Hom_{\OO}(\OO[G],\OO)$ 
defined by $h \mapsto 0$ for $h\neq g$ and $g \mapsto 1$. 
Hence we conclude that $\leftindex^{\Gamma}{\mathcal{O}[G]} = \mathcal{O}[G]^{\Gamma}$.	

We have that 
$[ \Gamma : \mathcal{O}[G]]_{\mathcal{O}} 
= [ \Gamma^{\opp} : \mathcal{O}[G]^{\opp}]_{\mathcal{O}}$
since $(-)^{\opp}$ is an $\mathcal{O}$-linear isomorphism.
As $\OO[G]=\OO[G]^{\opp}$, we then have
\begin{align*} 
[ \Gamma : \OO[G]]_\OO&= 
[ \Gamma^{\opp} : \OO[G]]_\OO \\
&=[(\OO[G])^{\vee}: ((\OO[G])^\vee)^{\Gamma^\opp} ]_\OO\\
&=[ \OO[G]^*:\leftindex^{\Gamma}{((\OO[G])^*)}]_\OO\\
 &=[\OO[G]: \leftindex^{\Gamma}{\OO[G]}]_\OO,
\end{align*}
where the second equality follows from Lemma \ref{lem:dualsubover}(i).
Since 
\[
[\Gamma : \leftindex^{\Gamma}{\mathcal{O}[G]} ]_{\mathcal{O}}
= [\Gamma : \mathcal{O}[G]]_{\mathcal{O}} \cdot 
[ \mathcal{O}[G] : \leftindex^{\Gamma}{\mathcal{O}[G]}]_{\mathcal{O}},
\]
the second equality of \eqref{eq:index-max-eq} follows.

For the last statement, note that the hypotheses ensure that $K[G]$ is separable and hence maximal orders exist (see \S \ref{subsec:max-orders}). 
For any $\mathcal{O}$-order $\Lambda$ in $K[G]$, let $\mathrm{Disc}(\Lambda)$
denote the discriminant of $\Lambda$ with respect to the reduced trace map 
$\mathrm{tr} : K[G] \rightarrow K$.
Then $\mathrm{Disc}(\mathcal{M})$ is independent of the choice of maximal $\OO$-order $\mathcal{M}$ of $K[G]$ by \cite[(25.3)]{MR1972204}. Moreover, by \cite[(26.3)(iii)]{MR632548} we have $\mathrm{Disc}(\mathcal{O}[G]) = [\mathcal{M} : \mathcal{O}[G]]_{\mathcal{O}}^{2} \cdot \mathrm{Disc}(\mathcal{M})$, and so $[\mathcal{M} : \mathcal{O}[G]]_{\mathcal{O}}$
is independent of the choice of $\mathcal{M}$. 
\end{proof}

\subsection{The main theorem for group rings}
We now obtain a more precise version 
of Theorem \ref{thm:main} for lattices over group rings. 

\begin{theorem}\label{thm:gpringmain}
Let $\mathcal{O}$ be a Dedekind domain with field of fractions $K$. 
Assume that $K$ is a global field and that $\mathcal{O} \neq K$.
Let $G$ be a finite group such that $|G|$ is invertible in $K$.
Set $s=2$ if $G$ is abelian and $s=3$ otherwise.
Let $\mathcal{M}$ be a maximal $\mathcal{O}$-order in $K[G]$ containing $\mathcal{O}[G]$.
Let $\mathcal{K}$ be any nonzero ideal of $\mathcal{O}$.
Then there exists a nonzero ideal $\mathcal{I}$ of $\mathcal{O}$,
that can be chosen to be coprime to $\mathcal{K}$, with the following property:
given any $\mathcal{O}[G]$-lattice $X$ such that $KX$ is free of rank $n$ over $K[G]$,
there exists a free $\mathcal{O}[G]$-sublattice 
$Z$ of $X$ such that $[X : Z]_{\mathcal{O}}$ divides 
$\mathcal{I} \cdot [\mathcal{M} : \mathcal{O}[G]]_{\mathcal{O}}^{sn}$. 
Moreover, if $\mathcal{M}$ satisfies the equivalent conditions of Lemma \ref{lem:equiv-locally-free-implies-free}, then we can take $\mathcal{I}=\mathcal{O}$.
\end{theorem}

\begin{remark}
In the case $\mathcal{O}=\Z$, 
explicit conditions on $G$ under which $\mathcal{M}$ satisfies the equivalent conditions
of Lemma \ref{lem:equiv-locally-free-implies-free} are given in 
Proposition \ref{prop:trivialclassgroup} and Corollary \ref{cor:trivialclassgroup-abelian-case}.
\end{remark}

\begin{proof}[Proof of Theorem \ref{thm:gpringmain}]
We apply Theorem \ref{thm:main} with $\Lambda=\mathcal{O}[G]$.
If $G$ is abelian, then the desired result follows directly. 
Otherwise, by Proposition \ref{prop:index-in-two-orders}
we can and do take 
$J={\mathcal{O}[G]}^{\mathcal{M}}=\leftindex^{\mathcal{M}}{\mathcal{O}[G]}$, and we have
$[\mathcal{M} : J]_{\mathcal{O}}^{n} \cdot [\mathcal{M} : \Lambda]_{\mathcal{O}}^{n} =
[\mathcal{M} : \Lambda]_{\mathcal{O}}^{3n}$.
\end{proof}

\subsection{Jacobinski's formula and the index of a group ring in a maximal order}\label{subsec:jacobinskis-formula}
For further details on the following setup and notation, we refer the reader to 
\cite[\S 27]{MR632548} and the references therein.

Let $\mathcal{O}$ be a Dedekind domain with field of fractions $K\neq \OO$. 
Let $G$ be a finite group such that $|G|$ is invertible in $K$.
Then $K[G]$ is a separable finite-dimensional $K$-algebra.
We may write $K[G] = A_{1} \times \cdots \times A_{t}$,
where each $A_{i}$ is a simple $K$-algebra. 
For each $i$, let $K_{i}$ denote the centre of $A_{i}$.
Then each $K_{i}$ is a finite separable field extension of $K$, and there exist integers 
$n_{1}, \ldots, n_{t}$ such that $\dim_{K_{i}}A_{i} =n_{i}^{2}$ for each $i$.
Let $\mathrm{tr}_{i}$ denote the reduced trace from $A_{i}$ to $K$ (see \cite[\S 7D]{MR632548}).
Then 
$\tr_{i} = T_{K_{i}/K} \circ \tr_{A_{i}/K_{i}}$,
where $T_{K_{i}/K}$ is the ordinary trace from $K_{i}$ to $K$, and $\tr_{A_{i}/K_{i}}$
is the reduced trace from $A_{i}$ to $K_{i}$.

Let $\mathcal{M}$ be a maximal $\mathcal{O}$-order such that
$\mathcal{O}[G] \subseteq \mathcal{M} \subseteq K[G]$.
For each $i$, let $\mathcal{M}_{i}=\mathcal{M} \cap A_{i}$,
let $\mathcal{O}_{i}$ denote the integral closure of $\mathcal{O}$ in $K_{i}$,
and define the \textit{inverse different} of $\mathcal{M}_{i}$ with respect to $\mathrm{tr}_{i}$ 
to be 
$\mathcal{D}_{i}^{-1} = \{ x \in A_{i} : \mathrm{tr}_{i}(x\mathcal{M}_{i}) \subseteq \mathcal{O} \}$.
Then $\mathcal{M}=\mathcal{M}_{1} \times \cdots \times \mathcal{M}_{t}$ and each
$\mathcal{D}_{i}^{-1}$ is a two-sided $\mathcal{M}_{i}$-lattice containing $\mathcal{M}_{i}$.

\begin{theorem}[Jacobinski \cite{MR204538}]\label{thm:Jacobinski}
In the notation above, we have
\[ 
\leftindex^{\mathcal{M}}{\mathcal{O}[G]}
= {\mathcal{O}[G]}^{\mathcal{M}} 
= \bigoplus_{i=1}^{t} |G|n_{i}^{-1}\mathcal{D}_{i}^{-1}.
\]
\end{theorem}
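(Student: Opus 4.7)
The plan is to identify $\leftindex^{\MM}{\OO[G]}$ with the dual lattice of $\MM$ with respect to a suitable symmetric $K$-bilinear form on $K[G]$, and then to translate this into a statement about the reduced-trace inverse differents $\mathcal{D}_i^{-1}$ by comparing the two forms. The equality $\leftindex^{\MM}{\OO[G]} = {\OO[G]}^{\MM}$ has already been established in Proposition~\ref{prop:index-in-two-orders}, so I only need to compute the left-hand side.

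First I would introduce the symmetric $K$-bilinear form $\langle a, b \rangle := \tau(ab)$ on $K[G]$, where $\tau \colon K[G] \to K$ picks out the coefficient of the identity $e$. The families $\{g\}_{g \in G}$ and $\{g^{-1}\}_{g \in G}$ are dual $\OO$-bases, so $\OO[G]$ is self-dual: $\OO[G] = \{y \in K[G] : \tau(y\,\OO[G]) \subseteq \OO\}$. Since $\tau$ is cyclic ($\tau(ab) = \tau(ba)$, as both equal $\sum_{g \in G} a_g b_{g^{-1}}$) and $\OO[G]\MM = \MM$, self-duality yields
\[
\leftindex^{\MM}{\OO[G]}
 = \{x \in K[G] : \MM x \subseteq \OO[G]\}
 = \{x \in K[G] : \tau(\MM x) \subseteq \OO\}.
\]

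The next step is to compare $\tau$ with the reduced traces $\tr_i$. The regular trace on $K[G]$ satisfies $\tr^{\mathrm{reg}}_{K[G]/K}\bigl(\sum_g a_g g\bigr) = |G|\, a_e = |G|\tau(\sum_g a_g g)$, while the decomposition $K[G] = \bigoplus_i A_i$ together with $\tr^{\mathrm{reg}}_{A_i/K_i} = n_i \tr_{A_i/K_i}$ (the regular $A_i$-module is $n_i$ copies of the simple one) gives $\tr^{\mathrm{reg}}_{K[G]/K} = \sum_i n_i (\tr_i \circ \pi_i)$. Hence $|G|\tau = \sum_i n_i(\tr_i\circ\pi_i)$. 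As a maximal order, $\MM$ contains the central idempotents of $K[G]$ and so decomposes as $\bigoplus_i \MM_i$; moreover $\leftindex^{\MM}{\OO[G]}$ is a two-sided $\MM$-ideal by the already-established equality with ${\OO[G]}^{\MM}$, so the condition $\tau(\MM x) \subseteq \OO$ can be tested one component at a time. Writing $x = \sum_i x_i$ with $x_i \in A_i$, this reduces to $\tr_i(\MM_i x_i) \subseteq \tfrac{|G|}{n_i}\OO$ for each $i$, which by the very definition of $\mathcal{D}_i^{-1}$ is equivalent to $x_i \in \tfrac{|G|}{n_i}\mathcal{D}_i^{-1}$.

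The main obstacle is bookkeeping across the two bilinear forms: $\tau$ makes $\OO[G]$ self-dual, whereas $\tr_i$ is the form in terms of which $\mathcal{D}_i^{-1}$ is defined, and the scalar discrepancy $|G|/n_i$ between them is exactly what produces the coefficients in Jacobinski's formula. Once the identity $|G|\tau = \sum_i n_i(\tr_i\circ \pi_i)$ is in hand and one observes that $\leftindex^{\MM}{\OO[G]}$ decomposes along the central idempotents $e_i$, the formula drops out of a routine component-wise calculation.
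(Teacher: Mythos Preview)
The paper does not prove this theorem; it is stated with attribution to Jacobinski \cite{MR204538} and then used as input to Corollary~\ref{cor:index-formula}. Your argument is correct and is essentially the classical proof (as in \cite[(27.13)]{MR632548}): identify $\leftindex^{\MM}{\OO[G]}$ with the $\tau$-dual of $\MM$ via the self-duality of $\OO[G]$ under $\langle a,b\rangle=\tau(ab)$, and then translate to the reduced-trace duals $\mathcal{D}_i^{-1}$ using the identity $|G|\,\tau=\sum_i n_i(\tr_i\circ\pi_i)$, which comes from computing the regular trace two ways. One small remark: you do not need to invoke the two-sidedness of $\leftindex^{\MM}{\OO[G]}$ to test the condition componentwise; it already follows from $\MM=\bigoplus_i\MM_i$ (so each $m_i$ may be chosen independently, in particular equal to $0$).
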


A less explicit version of the following result is given in \cite[Proposition 3.6]{MR4356849}.

\begin{corollary}\label{cor:index-formula}
In the notation above, we have
\[
[\mathcal{M} : {\mathcal{O}[G]} ]_{\mathcal{O}}
= [\mathcal{O}[G] : \leftindex^{\mathcal{M}}{\mathcal{O}[G]} ]_{\mathcal{O}}
= \left( 
|G|^{|G|}
\prod_{i=1}^{t} \left(n_{i}^{[K_{i}:K]n_{i}^{2}}[\mathcal{D}_{i}^{-1}: \mathcal{M}_{i} ]_{\mathcal{O}}\right)^{-1}
\right)^{\frac{1}{2}},
\]
and this index is independent of the choice of $\mathcal{M}$.
\end{corollary}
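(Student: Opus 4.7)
The strategy is to reduce the corollary, via Proposition \ref{prop:index-in-two-orders}, to a closed-form computation of $[\mathcal{M} : \leftindex^{\mathcal{M}}{\mathcal{O}[G]}]_{\mathcal{O}}$ using Jacobinski's explicit description. Proposition \ref{prop:index-in-two-orders} already supplies the first equality of the corollary and the independence of the chosen maximal order $\mathcal{M}$, and moreover identifies the common value of the two indices with $[\mathcal{M} : \leftindex^{\mathcal{M}}{\mathcal{O}[G]}]_{\mathcal{O}}^{1/2}$. Thus it suffices to show that the last index equals $|G|^{|G|}\prod_{i=1}^{t}(n_{i}^{[K_{i}:K]n_{i}^{2}}[\mathcal{D}_{i}^{-1}:\mathcal{M}_{i}]_{\mathcal{O}})^{-1}$.

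Next, I would exploit the Wedderburn decomposition together with Theorem \ref{thm:Jacobinski}, which gives $\mathcal{M} = \bigoplus_{i=1}^{t}\mathcal{M}_{i}$ and $\leftindex^{\mathcal{M}}{\mathcal{O}[G]} = \bigoplus_{i=1}^{t}|G|n_{i}^{-1}\mathcal{D}_{i}^{-1}$ compatibly with $K[G] = \bigoplus_{i=1}^{t}A_{i}$. Since the generalised module index of direct sums of full lattices factors componentwise (an immediate iterate of Lemma \ref{lem:index-diagram}), this yields
\[
[\mathcal{M} : \leftindex^{\mathcal{M}}{\mathcal{O}[G]}]_{\mathcal{O}} \; = \; \prod_{i=1}^{t}[\mathcal{M}_{i} : |G|n_{i}^{-1}\mathcal{D}_{i}^{-1}]_{\mathcal{O}}.
\]

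For each $i$, I would factor the local index through $\mathcal{D}_{i}^{-1}$ using multiplicativity,
\[
[\mathcal{M}_{i} : |G|n_{i}^{-1}\mathcal{D}_{i}^{-1}]_{\mathcal{O}} \; = \; [\mathcal{D}_{i}^{-1}:\mathcal{M}_{i}]_{\mathcal{O}}^{-1} \cdot [\mathcal{D}_{i}^{-1} : |G|n_{i}^{-1}\mathcal{D}_{i}^{-1}]_{\mathcal{O}},
\]
and compute the second factor directly from the definition of the generalised module index via determinants: multiplication by the scalar $|G|/n_{i} \in K^{\times}$ on $A_{i}$ is a $K$-linear automorphism of determinant $(|G|/n_{i})^{\dim_{K}A_{i}}$, and $\dim_{K}A_{i} = [K_{i}:K]n_{i}^{2}$. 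Hence the second factor equals $(|G|/n_{i})^{[K_{i}:K]n_{i}^{2}}\mathcal{O}$ as a fractional ideal.

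Assembling these contributions over $i$ and invoking the standard identity $\sum_{i=1}^{t}[K_{i}:K]n_{i}^{2} = \dim_{K}K[G] = |G|$ collapses the $|G|$-powers into the single factor $|G|^{|G|}$, yielding the required closed-form expression for $[\mathcal{M} : \leftindex^{\mathcal{M}}{\mathcal{O}[G]}]_{\mathcal{O}}$; Proposition \ref{prop:index-in-two-orders} then supplies the final square root and the independence from $\mathcal{M}$. The argument is essentially bookkeeping with fractional ideals; the one point of care is to apply the scaling rule with the correct ambient $K$-dimension $[K_{i}:K]n_{i}^{2}$ rather than $n_{i}^{2}$, and to treat $|G|/n_{i}$ symbolically as an element of $K^{\times}$ regardless of whether it happens to lie in $\mathcal{O}$.
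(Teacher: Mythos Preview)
Your proposal is correct and follows essentially the same route as the paper: apply Jacobinski's formula to factor $[\mathcal{M}:\leftindex^{\mathcal{M}}{\mathcal{O}[G]}]_{\mathcal{O}}$ componentwise, pull out the scalar $(|G|/n_i)^{\dim_K A_i}$ on each $A_i$, collapse via $\sum_i \dim_K A_i = |G|$, and then invoke Proposition~\ref{prop:index-in-two-orders} for the square root and independence of $\mathcal{M}$. The only cosmetic difference is that you route the index through $\mathcal{D}_i^{-1}$ whereas the paper routes it through $\mathcal{M}_i$, but this is the same multiplicativity identity read in a different order.
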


\begin{proof}
By Theorem \ref{thm:Jacobinski} we have
\begin{align*}
[\mathcal{M} : \leftindex^{\mathcal{M}}{\mathcal{O}[G]} ]_{\mathcal{O}} 
&=\prod_{i=1}^{t} [\mathcal{M}_{i} : (|G|n_{i}^{-1}\mathcal{D}_{i}^{-1}) ]_{\mathcal{O}}\\
&=\prod_{i=1}^{t} (|G|n_{i}^{-1})^{\dim_{K}A_{i}}[\mathcal{M}_{i} : \mathcal{D}_{i}^{-1}]_{\mathcal{O}}\\
&= |G|^{|G|} \prod_{i=1}^{t} (n_{i}^{[K_{i}:K]n_{i}^{2}}[\mathcal{D}_{i}^{-1}: \mathcal{M}_{i} ]_{\mathcal{O}})^{-1},
\end{align*}
where in the last equality we have used that  $\dim_{K}A_{i} = [K_{i}:K]n_{i}^{2}$
for each $i$ and that $\prod_{i=1}^{t} \dim_{K} A_{i} = |G|$.
The desired result now follows from Proposition \ref{prop:index-in-two-orders}.
\end{proof}

\begin{corollary}\label{cor:rational-index-formula}
In the notation above, if $A_{i} \cong \mathrm{Mat}_{n_{i}}(K)$ for $i=1,\ldots,t$, then
\[
[\mathcal{M} : {\mathcal{O}[G]} ]_{\mathcal{O}}
= [\mathcal{O}[G] : \leftindex^{\mathcal{M}}{\mathcal{O}[G]} ]_{\mathcal{O}}
= \left( 
|G|^{|G|}
\prod_{i=1}^{t} n_{i}^{-n_{i}^{2}}
\right)^{\frac{1}{2}}.
\]
\end{corollary}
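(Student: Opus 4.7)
The plan is to deduce this from Corollary \ref{cor:index-formula} by showing that under the hypothesis $A_i \cong \mathrm{Mat}_{n_i}(K)$, each factor $n_i^{[K_i:K]n_i^2}\,[\mathcal{D}_i^{-1}:\mathcal{M}_i]_{\mathcal{O}}$ collapses to $n_i^{n_i^2}$. Two claims suffice: first, that $[K_i:K] = 1$, which is immediate since $K_i$ is the centre of $\mathrm{Mat}_{n_i}(K)$ and this centre is $K$ itself; second, that $[\mathcal{D}_i^{-1}:\mathcal{M}_i]_{\mathcal{O}} = \mathcal{O}$ for every $i$. Substituting both into the formula of Corollary \ref{cor:index-formula} gives the desired identity, so the real work lies in establishing the second claim.

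I would verify the equality $\mathcal{D}_i^{-1} = \mathcal{M}_i$ locally at each maximal ideal $\mathfrak{p}$ of $\mathcal{O}$, since both the order $\mathcal{M}_i$ and its inverse different with respect to $\mathrm{tr}_i$ are characterised by a pairing condition that commutes with localisation. Because $\mathcal{O}_{\mathfrak{p}}$ is a discrete valuation ring, every maximal $\mathcal{O}_{\mathfrak{p}}$-order in $\mathrm{Mat}_{n_i}(K)$ is conjugate in $A_i$ to $\mathrm{Mat}_{n_i}(\mathcal{O}_{\mathfrak{p}})$, and both the reduced trace and the inverse different are invariant under inner automorphisms of $A_i$. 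This reduces the problem to the case $(\mathcal{M}_i)_{\mathfrak{p}} = \mathrm{Mat}_{n_i}(\mathcal{O}_{\mathfrak{p}})$.

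At this point the verification is an explicit calculation: $\mathrm{tr}_i$ is the ordinary matrix trace, and the standard matrix units $E_{jk}$ form an $\mathcal{O}_{\mathfrak{p}}$-basis satisfying $\mathrm{tr}(E_{jk}E_{lm}) = \delta_{jm}\delta_{kl}$. Hence $\{E_{kj}\}$ is the dual basis to $\{E_{jk}\}$ under the trace pairing, and the inverse different of $\mathrm{Mat}_{n_i}(\mathcal{O}_{\mathfrak{p}})$ therefore coincides with $\mathrm{Mat}_{n_i}(\mathcal{O}_{\mathfrak{p}})$ itself. There is no substantive obstacle; the step warranting the most care is ensuring that forming the inverse different commutes with localisation, but this is routine since the pairing used to define $\mathcal{D}_i^{-1}$ is non-degenerate and the defining condition transfers verbatim to $(\mathcal{M}_i)_{\mathfrak{p}}$.
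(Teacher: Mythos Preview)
Your proof is correct and follows the same approach as the paper: both deduce the result from Corollary~\ref{cor:index-formula} by observing that the hypothesis forces $K_{i}=K$ and $\mathcal{D}_{i}^{-1}=\mathcal{M}_{i}$. The paper simply asserts these two facts without justification, whereas you have supplied a careful argument for the second via localisation and the matrix-unit computation; your extra detail is correct and the localisation step is indeed routine.
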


\begin{proof}
The hypotheses imply that $K_{i}=K$ and  
$\mathcal{D}_{i}^{-1} = \mathcal{M}_{i}$ for $i=1,\ldots,t$.
Thus the desired result follows from Corollary \ref{cor:index-formula}.
\end{proof}

A result similar to the following is given in \cite[p.\ 173, (11)]{MR4356849}.

\begin{corollary}\label{cor:abeliancasegeneral} 
In the notation above, if $G$ is abelian, then 
\[
[\mathcal{M} : {\mathcal{O}[G]} ]_{\mathcal{O}}
= [\mathcal{O}[G] : \leftindex^{\mathcal{M}}{\mathcal{O}[G]} ]_{\mathcal{O}}
= \left( 
|G|^{|G|}
\prod_{i=1}^{t} (\Delta_{K_{i}/K})^{-1}
\right)^{\frac{1}{2}},
\] 
where $\Delta_{K_{i}/K}$ denotes the discriminant of $\mathcal{O}_{i}$
with respect to $\mathcal{O}$.
\end{corollary}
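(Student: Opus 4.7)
The plan is to reduce to Corollary \ref{cor:index-formula} by specialising all the data to the case where each simple component is a field.

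First I would observe that since $G$ is abelian, the algebra $K[G]$ is commutative, so each simple factor $A_{i}$ in the Wedderburn decomposition $K[G] = A_{1} \oplus \cdots \oplus A_{t}$ equals its centre $K_{i}$, which is a finite separable field extension of $K$. In particular $n_{i} = 1$ for $i = 1, \ldots, t$, so the identity $\dim_{K_{i}} A_{i} = n_{i}^{2}$ becomes trivial, and the reduced trace $\mathrm{tr}_{i}$ from $A_{i}$ to $K$ coincides with the ordinary field trace $T_{K_{i}/K}$.

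Next I would identify the local pieces of the maximal order. Since $\mathcal{M}_{i} = \mathcal{M} \cap A_{i}$ is a maximal $\mathcal{O}$-order in the field $A_{i} = K_{i}$, it must coincide with the integral closure $\mathcal{O}_{i}$ of $\mathcal{O}$ in $K_{i}$. With this identification and the substitution $\mathrm{tr}_{i} = T_{K_{i}/K}$, the inverse different $\mathcal{D}_{i}^{-1} = \{x \in K_{i} : T_{K_{i}/K}(x\mathcal{O}_{i}) \subseteq \mathcal{O}\}$ is precisely the classical inverse different of $\mathcal{O}_{i}$ over $\mathcal{O}$. By the standard formula relating the different and the discriminant (computable locally via a choice of basis and determinant), we have
\[
[\mathcal{D}_{i}^{-1} : \mathcal{M}_{i}]_{\mathcal{O}} = [\mathcal{D}_{i}^{-1} : \mathcal{O}_{i}]_{\mathcal{O}} = \Delta_{K_{i}/K}.
\]

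Finally, plugging $n_{i} = 1$ and the above index into the formula of Corollary \ref{cor:index-formula} collapses $n_{i}^{[K_{i}:K]n_{i}^{2}}$ to $1$, leaving
\[
[\mathcal{M} : \mathcal{O}[G]]_{\mathcal{O}} = \left( |G|^{|G|} \prod_{i=1}^{t} \Delta_{K_{i}/K}^{-1} \right)^{\frac{1}{2}},
\]
which is the asserted formula, and the equality with $[\mathcal{O}[G] : \leftindex^{\mathcal{M}}{\mathcal{O}[G]}]_{\mathcal{O}}$ is already contained in Corollary \ref{cor:index-formula}. No step here looks genuinely hard; the only point requiring minor care is verifying that the generalised module index $[\mathcal{D}_{i}^{-1} : \mathcal{O}_{i}]_{\mathcal{O}}$, defined here via local determinants, agrees with the classical discriminant ideal $\Delta_{K_{i}/K}$, but this is immediate from the definition of the discriminant as the determinant of the trace form on a basis together with standard duality between $\mathcal{O}_{i}$ and $\mathcal{D}_{i}^{-1}$ under the trace pairing.
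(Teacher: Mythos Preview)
Your proposal is correct and follows essentially the same route as the paper's proof: both specialise Corollary~\ref{cor:index-formula} by noting that in the abelian case $n_{i}=1$, $A_{i}=K_{i}$, $\mathcal{M}_{i}=\mathcal{O}_{i}$, the reduced trace is the ordinary trace so $\mathcal{D}_{i}^{-1}$ is the classical inverse different, and then identifying $[\mathcal{D}_{i}^{-1}:\mathcal{O}_{i}]_{\mathcal{O}}$ with $\Delta_{K_{i}/K}$. The only cosmetic difference is that the paper makes the last identification via the chain $[\mathcal{D}_{i}^{-1}:\mathcal{O}_{i}]_{\mathcal{O}}=[\mathcal{O}_{i}:\mathcal{D}_{i}]_{\mathcal{O}}=\Norm_{K_{i}/K}(\mathcal{D}_{i})=\Delta_{K_{i}/K}$, whereas you invoke trace duality directly.
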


\begin{proof}
Since $A$ is commutative, for every $i$ we have $n_{i}=1$, $A_{i}=K_{i}$, and $\mathcal{M}_{i}=\mathcal{O}_{i}$.
Thus the reduced trace $\tr_{i}$ coincides with the ordinary trace $T_{K_{i}/K}$
and so 
\[
\mathcal{D}_{i}^{-1} = \{ x \in K_{i} : T_{K_{i}/K}(x\mathcal{O}_{i}) \subseteq \mathcal{O}\}
\]
is the usual inverse different of $\mathcal{O}_{i}$ with respect to $\mathcal{O}$.
Moreover,
\[
[\mathcal{D}_{i}^{-1}: \mathcal{M}_{i} ]_{\mathcal{O}} 
= [\mathcal{D}_{i}^{-1}: \mathcal{O}_{i} ]_{\mathcal{O}}
= [\mathcal{O}_{i} : \mathcal{D}_{i}]_{\mathcal{O}}
= \Norm_{K_{i}/K}(\mathcal{D}_{i})
= \Delta_{K_{i}/K},
\]
where for the third equality, it suffices to first localise and then consider the determinant of the $K$-linear
endomorphism of $K_{i}$ given by multiplication by a generator of $\mathcal{D}_{i}$.
Therefore the desired result now follows from Corollary \ref{cor:index-formula}.
\end{proof}

We now make the last result completely explicit in the case $K=\Q$.

\begin{prop}\label{prop:abelianformula} 
Let $G$ be a finite abelian group and let $e$ denote its exponent.
Let $\mathcal{M}$ be the unique maximal $\Z$-order in $\Q[G]$. 
Then
\begin{equation*}\label{eq:abelianformula} 
{[\MM : \Z[G]] = \left( |G|^{|G|} \prod_{d \mid e }\biggl( d^{-\phi(d)} \prod_{p \mid d } p^{\frac{\phi(d)}{p-1}}\biggr)^{\hspace{-.1cm} t_d} \right)^{\hspace{-.1cm} \frac{1}{2}}},
\end{equation*}
where $t_{d}$ denotes the number of cyclic subgroups of $G$ of order $d$ and
$\phi(-)$ denotes the Euler totient function.
\end{prop}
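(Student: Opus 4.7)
The plan is to specialise Corollary \ref{cor:abeliancasegeneral} to the case $\mathcal{O} = \Z$, $K = \Q$. For a finite abelian group $G$ of exponent $e$, the Wedderburn decomposition takes the form $\Q[G] \cong \prod_{d \mid e} \Q(\zeta_d)^{t_d}$, where each simple factor is a cyclotomic field and the simple components correspond to $\Gal(\overline{\Q}/\Q)$-orbits of characters $\chi \in \widehat{G}$.

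First I would verify the multiplicities. The characters of $G$ of exact order $d$ form a set of cardinality $\phi(d) \cdot t_d$, since each cyclic subgroup of $\widehat{G} \cong G$ of order $d$ contributes exactly $\phi(d)$ generators, and $\Gal(\Q(\zeta_d)/\Q) \cong (\Z/d\Z)^{\times}$ acts transitively on the generators of a fixed cyclic group of order $d$. Hence there are exactly $t_d$ Galois orbits of characters of order $d$, each giving a simple component with $K_i = A_i = \Q(\zeta_d)$.

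Next, substituting this decomposition into Corollary \ref{cor:abeliancasegeneral} and regrouping the product over $i$ by the value of $d$ yields
\[
[\MM : \Z[G]] = \left( |G|^{|G|} \prod_{d \mid e} \Delta_{\Q(\zeta_d)/\Q}^{-t_d} \right)^{\frac{1}{2}}.
\]
The final step is to invoke the classical formula
\[
\Delta_{\Q(\zeta_d)/\Q} = \frac{d^{\phi(d)}}{\prod_{p \mid d} p^{\phi(d)/(p-1)}}
\]
for the discriminant of a cyclotomic field, which rearranges directly to the claimed expression. The only step of substance is the combinatorial identification of $t_d$ with the number of Galois orbits of characters of order $d$; thereafter the proof is pure substitution, and no serious obstacle is expected.
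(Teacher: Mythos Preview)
Your proposal is correct and follows essentially the same route as the paper: the paper cites a reference (Perlis--Walker) for the decomposition $\Q[G] \cong \prod_{d \mid e} \Q(\zeta_d)^{(t_d)}$ rather than verifying the multiplicities directly as you do, and then invokes the standard cyclotomic discriminant formula and Corollary~\ref{cor:abeliancasegeneral} exactly as you propose.
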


\begin{proof} 
By \cite[Theorem 2]{MR252526}, we have
$\Q[G] \cong \prod_{d \mid e} \Q(\zeta_{d})^{(t_{d})}$,
where $\Q(\zeta_{d})^{(t_{d})}$ denotes the direct product of $t_{d}$ copies of $\Q(\zeta_{d})$
(see also \cite{MR34758}).
Moreover,
\[
\Delta_{\Q(\zeta_{d})/\Q}^{-1} = \biggr( d^{-\phi(d)}  \prod_{p \mid d } p^{\frac{\phi(d)}{p-1}} \biggr) \Z
\]
by \cite[Proposition 2.7]{MR1421575}.
Therefore the desired result now follows from a straightforward calculation
and Corollary \ref{cor:abeliancasegeneral} in the case $K=\Q$.
\end{proof}

The following special case is equivalent to \cite[Lemma 5.2]{MR2040875}, which was proven using 
different methods.

\begin{corollary}
Let $p$ be a prime, let $k$ be a positive integer, and let $G$ be the cyclic group of order $p^{k}$.
Let $\mathcal{M}$ be the unique maximal $\Z$-order in $\Q[G]$. 
Then 
\[
[\mathcal{M} : \Z[G]] = p^{1 + p + \cdots + p^{k-1}}.
\]
\end{corollary}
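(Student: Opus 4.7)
The plan is to derive this formula as a direct specialisation of Proposition \ref{prop:abelianformula} to the cyclic group $G$ of order $p^{k}$. The exponent of $G$ is $e = p^{k}$, so the divisors $d$ of $e$ appearing in the product are precisely $p^{0}, p^{1}, \ldots, p^{k}$. Since a cyclic group contains a unique subgroup of each order dividing its own, we have $t_{p^{j}} = 1$ for every $j$.

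Next I would simplify the factor corresponding to each $d = p^{j}$ with $j \geq 1$. The only prime dividing $d$ is $p$ itself, so the inner product collapses to the single term $p^{\phi(p^{j})/(p-1)} = p^{p^{j-1}}$, while $d^{-\phi(d)} = p^{-j \phi(p^{j})} = p^{-jp^{j-1}(p-1)}$. The factor for $d = 1$ is the empty product, which is $1$. Combining this with $|G|^{|G|} = p^{kp^{k}}$ yields
\[
[\mathcal{M} : \Z[G]]^{2} = p^{kp^{k}} \cdot \prod_{j=1}^{k} p^{\,p^{j-1} - jp^{j-1}(p-1)}.
\]

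It then remains to show that the exponent above equals $2(1 + p + \cdots + p^{k-1}) = 2(p^{k}-1)/(p-1)$. Using $\sum_{j=1}^{k} p^{j-1} = (p^{k}-1)/(p-1)$ and the closed form for $\sum_{j=1}^{k} jp^{j-1}$ obtained by differentiating a geometric series, a routine calculation clears denominators and reduces the required identity to $2p^{k} - 2 = 2p^{k} - 2$. Taking square roots yields the claimed value $p^{1+p+\cdots+p^{k-1}}$.

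The only obstacle here is the algebraic manipulation of the power-sum identity; there is no conceptual difficulty, since all the heavy lifting (existence of $\mathcal{M}$, the Wedderburn decomposition $\Q[C_{p^{k}}] \cong \prod_{j=0}^{k} \Q(\zeta_{p^{j}})$, and the computation of cyclotomic discriminants) has already been packaged into Proposition \ref{prop:abelianformula}.
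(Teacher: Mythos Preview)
Your proposal is correct and follows exactly the route the paper intends: the paper presents this corollary as an immediate specialisation of Proposition~\ref{prop:abelianformula} (noting only that it coincides with a result of Hayes proved by other means), and you simply spell out the resulting exponent computation. The algebra you sketch checks out, so there is nothing to add.
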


\subsection{The case of group rings over the integers}\label{subsec:integral-group-rings}
Let $\mathbb{H}$ denote the quaternion division algebra over $\R$.
For a finite group $G$, let $\Irr_{\C}(G)$ denote the set of complex irreducible characters of $G$.
Recall that $\chi \in \Irr_{\C}(G)$ is said to be an \textit{irreducible symplectic character}
if it is real-valued and the corresponding factor of $\R[G]$ is isomorphic to
the ring of $k \times k$ matrices over $\mathbb{H}$, for some positive integer $k$. 
For each $\chi \in \Irr_{\C}(G)$, let $\Q(\chi)$ denote the field generated by the values of $\chi$, and
let $C(\chi)$ be the narrow class group of $\Q(\chi)$ if $\chi$ is symplectic,
and the usual ideal class group of $\Q(\chi)$ otherwise. 

The following result is well-known to experts, but the authors were unable to locate it in this precise form in the literature.

\begin{prop}\label{prop:trivialclassgroup}
Let $G$ be a finite group and let $\mathcal{M}$ be a maximal $\Z$-order in $\Q[G]$.
Suppose that no factor of $\mathbb{R}[G]$ is isomorphic to the quaternions $\mathbb{H}$.
Then $\mathcal{M}$ satisfies the equivalent conditions of Lemma
\ref{lem:equiv-locally-free-implies-free} if and only if $C(\chi)$ is trivial for each $\chi \in \Irr_{\C}(G)$.
\end{prop}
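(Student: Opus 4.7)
My plan is to decompose $\Q[G]$ via its Wedderburn decomposition and reduce to understanding $\Cl(\mathcal{M}_{i})$ for each simple factor. Write $\Q[G] = \prod_{i=1}^{t} A_{i}$ with each $A_{i}$ simple, so that $\mathcal{M} = \bigoplus_{i=1}^{t} \mathcal{M}_{i}$ with $\mathcal{M}_{i}$ a maximal $\Z$-order in $A_{i}$. Locally free lattices, stable isomorphism, and the cancellation property all decompose over such a product, so the first step is to establish $\Cl(\mathcal{M}) \cong \bigoplus_{i} \Cl(\mathcal{M}_{i})$, together with the fact that $\mathcal{M}$ has the locally free cancellation property if and only if each $\mathcal{M}_{i}$ does.

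Next I would use the standard dictionary relating the simple factors of $\Q[G]$ to characters: the $A_{i}$ are parametrised by the $\Gal(\overline{\Q}/\Q)$-orbits in $\Irr_{\C}(G)$, and the centre $K_{i}$ of $A_{i}$ equals $\Q(\chi)$ for any $\chi$ in the corresponding orbit. Hence $C(\chi) = \Cl(\mathcal{O}_{K_{i}})$ depends only on the orbit, and the right-hand condition of the proposition is equivalent to $\Cl(\mathcal{O}_{K_{i}}) = 1$ for every $i$.

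The hypothesis that no factor of $\R[G] = \prod_{i,v} A_{i}\otimes_{K_{i}}K_{i,v}$ is isomorphic to $\mathbb{H}$ is precisely the statement that every $A_{i}$ satisfies the Eichler condition relative to $\Z$: at each real place $v$ of $K_{i}$, the completion $A_{i}\otimes_{K_{i}}K_{i,v}$ is not the Hamilton quaternions. By the Eichler--Jacobinski--Swan cancellation theorem (recalled in \S\ref{subsec:lfcg-sfc}), each $\mathcal{M}_{i}$, and hence $\mathcal{M}$, then has the locally free cancellation property. By Lemma \ref{lem:equiv-locally-free-implies-free}, the conditions listed there become equivalent to $\Cl(\mathcal{M}_{i}) = 1$ for every $i$.

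The remaining step, which I expect to be the main obstacle, is to identify $\Cl(\mathcal{M}_{i})$ with $\Cl(\mathcal{O}_{K_{i}}) = C(\chi)$. Under the Eichler condition, the reduced norm together with Eichler's strong approximation theorem gives an isomorphism $\Cl(\mathcal{M}_{i}) \cong \Cl^{S_{i}}(\mathcal{O}_{K_{i}})$, where $\Cl^{S_{i}}$ denotes the ray class group with strictness imposed at the set $S_{i}$ of real places of $K_{i}$ at which $A_{i}$ ramifies. Since no factor of $\R[G]$ is $\mathbb{H}$, every real place of $K_{i}$ splits in $A_{i}$, so $S_{i} = \emptyset$ and $\Cl(\mathcal{M}_{i}) \cong \Cl(\mathcal{O}_{K_{i}}) = C(\chi)$. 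Combining with the previous paragraph yields the proposition. The no-$\mathbb{H}$-factor hypothesis is essential precisely at this step: without it, $S_{i}$ can be nonempty, so $\Cl(\mathcal{M}_{i})$ can properly surject onto $\Cl(\mathcal{O}_{K_{i}})$ and cancellation could also fail.
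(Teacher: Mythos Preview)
Your argument follows essentially the same route as the paper: decompose $\Q[G]$ and $\mathcal{M}$ along the Wedderburn components, use the no-$\mathbb{H}$-factor hypothesis to invoke Jacobinski's cancellation theorem, and then identify each $\Cl(\mathcal{M}_i)$ with the ideal class group of $K_i = \Q(\chi)$. The paper compresses this last identification into a bare citation of \cite[(49.32)]{MR892316}, whereas you unpack it via the reduced norm and the ray-class-group description $\Cl(\mathcal{M}_i)\cong \Cl^{S_i}(\mathcal{O}_{K_i})$.

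One step to tighten: your inference ``since no factor of $\R[G]$ is $\mathbb{H}$, every real place of $K_i$ splits in $A_i$'' is not literally forced by the hypothesis, because a real place $v$ at which $A_i$ ramifies contributes the factor $A_i\otimes_{K_i}K_{i,v}\cong M_{n_i/2}(\mathbb{H})$ to $\R[G]$, and this equals $\mathbb{H}$ only when $n_i=\chi(1)=2$. Thus a symplectic character of degree $>2$ would give $S_i\neq\emptyset$ while the hypothesis is still satisfied. The paper's citation glosses over the very same point, and the remark immediately following the proposition in effect acknowledges it: for symplectic $\chi$ one should in general expect the narrow class group of $\Q(\chi)$ in place of $C(\chi)$. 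So your write-up is no less rigorous than the paper's here, but if you want the argument to stand on its own you should either note that the hypothesis is being used in the stronger sense ``$G$ has no irreducible symplectic characters'' or carry the ray-class modulus $S_i$ through the final equivalence.
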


\begin{proof}
The hypothesis on $\R[G]$ ensures that $\Q[G]$ satisfies the Eichler condition relative to 
$\Z$ (see \cite[\S 51A]{MR892316}). 
Hence Jacobinski's cancellation theorem \cite[(51.24)]{MR892316} implies that every $\Z$-order in $\Q[G]$, in particular $\mathcal{M}$, has the locally free cancellation property. 
Now write $\Q[G] = A_{1} \times \cdots \times A_{t}$, 
where each $A_{i}$ is a simple $\Q$-algebra.
For each $i$, let $K_{i}$ denote the centre of $A_{i}$ and let $\mathcal{M}_{i}=A_{i} \cap \mathcal{M}$. 
Then $\mathcal{M}=\mathcal{M}_{1} \times \cdots \times \mathcal{M}_{t}$ and
$\Cl(\mathcal{M}) \cong \Cl(\mathcal{M}_{1}) \oplus \cdots \oplus \Cl(\mathcal{M}_{t})$.
Each $K_{i}$ is isomorphic to $\Q(\chi)$ for some $\chi \in \Irr_{\C}(G)$
and by \cite[(49.32)]{MR892316} $\Cl(\mathcal{M}_{i})$ is isomorphic to $C(\chi)$. 
Therefore we obtain the `if' direction of the desired equivalence.
The `only if' direction now follows from the fact that 
$\{K_{i} : 1 \leq i \leq t \} = \{\Q(\chi) : \chi \in \Irr_{\C}(G) \}$.
\end{proof}

\begin{remark}\label{rmk:group-alg-max-order-canc-prop}
The hypothesis in Proposition \ref{prop:trivialclassgroup} that no factor of $\R[G]$ is isomorphic
to the quaternions $\mathbb{H}$ can be weakened to the requirement that $\mathcal{M}$
has the locally free cancellation property (or the stably free cancellation property).
Maximal $\Z$-orders with the locally free cancellation property in $\Q[G]$ for $G$ a 
binary polyhedral group have been classified in \cite[Theorem II]{MR703486}.
For example, for $2 \leq n \leq 5$ every maximal $\Z$-order in $\Q[Q_{4n}]$ satisfies the 
equivalent conditions of Lemma \ref{lem:equiv-locally-free-implies-free}, where
$Q_{4n}$ denotes the generalised quaternion group of order $4n$. 
See also Remark \ref{rmk:cancellation-max-order}.
\end{remark}

\begin{corollary}\label{cor:trivialclassgroup-abelian-case}
Let $G$ be a finite abelian group and let $e$ denote its exponent.
Define
\begin{multline*}
\Sigma = \{ 1, 2, 3, 4, 5, 6, 7, 8, 9, 10, 11, 12, 13, 14, 15, 16, 17, 18, 19, 20, 21, 22, 24, 25, \\
26, 27, 28, 30, 32, 33, 34, 35, 36, 38, 40, 42, 44, 45, 48, 50, 54, 60, 66, 70, 84, 90 \}
\end{multline*}
and let $\mathcal{M}$ be the unique maximal $\Z$-order in $\Q[G]$.
Then $\mathcal{M}$ satisfies the equivalent
conditions of Lemma \ref{lem:equiv-locally-free-implies-free} if and only if $e \in \Sigma$.
\end{corollary}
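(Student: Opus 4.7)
The plan is to apply Proposition \ref{prop:trivialclassgroup} to reduce the statement to a condition on the class numbers of cyclotomic fields, and then invoke the classical determination of cyclotomic fields with class number one.

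First, I would verify the hypothesis of Proposition \ref{prop:trivialclassgroup}: since $G$ is abelian, every irreducible complex representation of $G$ is one-dimensional, so $\R[G]$ decomposes as a product of copies of $\R$ and $\C$, with no factor isomorphic to $\mathbb{H}$. Thus the proposition applies, and $\mathcal{M}$ satisfies the equivalent conditions of Lemma \ref{lem:equiv-locally-free-implies-free} if and only if $C(\chi)$ is trivial for every $\chi \in \Irr_{\C}(G)$.

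Next, I would identify the fields $\Q(\chi)$ that actually arise. The character group $\hat{G}$ is (non-canonically) isomorphic to $G$, and so contains characters of every order $d$ dividing $e$; a character of order $d$ generates the cyclotomic field $\Q(\zeta_d)$, and conversely every character of $G$ has order dividing $e$. Hence $\{\Q(\chi) : \chi \in \Irr_{\C}(G)\} = \{\Q(\zeta_d) : d \mid e\}$, and the triviality condition becomes: the class number of $\Q(\zeta_d)$ equals $1$ for every divisor $d$ of $e$.

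The final step is to match this against $\Sigma$, and this is the main obstacle in the sense that it rests on the deep classical theorem of Masley and Montgomery (see for instance \cite{MR1421575} and the references therein), which determines the positive integers $n$ for which the class number of $\Q(\zeta_n)$ equals $1$: under the standard identification $\Q(\zeta_n) = \Q(\zeta_{2n})$ for odd $n$, this set is precisely $\Sigma$. A direct inspection of $\Sigma$ shows that it is closed under taking divisors, so requiring all divisors of $e$ to give cyclotomic fields of class number one is equivalent to the single condition $e \in \Sigma$. Once the Masley--Montgomery theorem is invoked, the remainder is a straightforward finite verification.
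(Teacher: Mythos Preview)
Your proposal is correct and follows essentially the same route as the paper: apply Proposition \ref{prop:trivialclassgroup} (checking the absence of quaternionic factors via commutativity), identify $\{\Q(\chi):\chi\in\Irr_{\C}(G)\}$ with $\{\Q(\zeta_d):d\mid e\}$, invoke the Masley--Montgomery classification of cyclotomic fields with class number one, and verify that $\Sigma$ is closed under divisors. The only cosmetic difference is that the paper phrases the identification of the character fields via the invariant-factor decomposition of $G$ rather than via the character group, but the content is the same.
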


\begin{proof}
First write $G \cong C_{n_1} \times C_{n_2} \times \cdots \times C_{n_k}$ for positive integers 
$k, n_{1}, \ldots, n_{k}$ such that $n_{i} \mid n_{i+1}$ for $1 \leq i \leq k-1$.
Then $e=n_{k}$ and
$\{\Q(\chi) : \chi \in \Irr_{\C}(G) \} = \{ \Q(\zeta_{d}) : d \mid e \}$.
Since $\Z[\zeta_{d}]$ is the ring of integers of $\Q(\zeta_{d})$, this
implies that
\[
\{ C(\chi) :  \chi \in \Irr_{\C}(G) \} =  \{ \Cl(\Z[\zeta_{d}]) : d \mid e \}.
\]
The set $\Sigma$ is precisely the set positive integers $n$ for which $\Cl(\Z[\zeta_{n}])=0$ 
by \cite{MR429824}; see also \cite[Theorem 11.1]{MR1421575} 
(note that we include choices of $n$ such that $n \equiv 2 \bmod{4}$).
It can easily be checked that $\Sigma$ is also precisely the set of choices of $e$ 
for which $\Cl(\Z[\zeta_{d}])=0$ for all $d \mid e$.
Since no factor of $\Q[G]$ is isomorphic to the quaternions $\mathbb{H}$, the desired result
now follows from Proposition \ref{prop:trivialclassgroup}.
\end{proof}
 
\begin{remark}\label{rmk:trivial-locally-free-class-groups-of-group-rings}
The hypotheses of Proposition \ref{prop:trivialclassgroup} and Corollary \ref{cor:trivialclassgroup-abelian-case} are much weaker than $\Cl(\Z[G])$ itself being trivial. 
If $G$ is a finite abelian group,
then a result of Ph.\ Cassou-Nogu\`es \cite{MR0340393} shows that 
$\text{Cl}(\Z[G])$ is trivial if and only if
either $G \cong C_{2} \times C_{2}$ or $G \cong C_{n}$ where $1 \leq n \leq 11$ or $n \in \{13,14,17,19 \}$.
If $G$ is a finite non-abelian non-dihedral group,
then a result of End\^{o} and Hironaka \cite{MR519042} shows that 
$\text{Cl}(\Z[G])$ is trivial if and only if $G \cong A_{4}$, $S_{4}$ or $A_{5}$
(the if direction was already shown by Reiner and Ullom \cite{MR0367043}). 
For partial results in the dihedral case, see \cite{MR570730}.
\end{remark}

Many specialisations of Theorem \ref{thm:gpringmain} can 
now be obtained by applying the results of \S \ref{subsec:jacobinskis-formula}, 
Proposition \ref{prop:trivialclassgroup} and/or Corollary \ref{cor:trivialclassgroup-abelian-case};
the following is just one such example.

\begin{theorem}\label{thm:rat-rep-groups}
Let $G$ be a finite group and suppose that there exist positive integers $t, n_{1}, \ldots, n_{t}$ 
such that  $\Q[G] \cong \prod_{i=1}^{t} \mathrm{Mat}_{n_{i}}(\Q)$.
If $X$ is an $\Z[G]$-lattice such that $\Q X$ is free of rank $n$ over $\Q[G]$,
then there exists a free $\Z[G]$-sublattice $Z$ of $X$ such that $[X : Z]$ divides 
\[
\left( 
|G|^{|G|}
{\prod_{i=1}^{t}} n_{i}^{-n_{i}^{2}}
\right)^{\frac{3n}{2}}.
\]
\end{theorem}

\begin{proof}
Proposition \ref{prop:trivialclassgroup} implies that any maximal $\Z$-order in $\Q[G]$ satisfies the equivalent conditions of Lemma \ref{lem:equiv-locally-free-implies-free}.
Thus the result follows from Theorem \ref{thm:gpringmain} and Corollary \ref{cor:rational-index-formula}.
\end{proof}

\begin{remark}\label{rmk:collection-rat-rep-groups}
The collection of finite groups $G$ satisfying the hypothesis of Theorem \ref{thm:rat-rep-groups}
is closed under direct products and includes
symmetric groups and hyperoctahedral groups (e.g.\ the dihedral group of order $8$). See \cite{MR0357573} and \cite{MR765700} for more on this topic. 
\end{remark}

\section{Group rings modulo trace}\label{sec:group-rings-modulo-trace}

Let $\mathcal{O}$ be a Dedekind domain with field of fractions $K \neq \OO$. 
Let $G$ be a finite group such that $|G|$ is invertible in $K$ and let 
$\mathrm{Tr}_{G} = \sum_{g \in G} g$. 
Then both $K[G]$ and its quotient $K[G]/(\mathrm{Tr}_{G})$
are separable finite-dimensional $K$-algebras.
The purpose of this section is to consider lattices over the $\mathcal{O}$-order 
$\mathcal{O}[G]/(\mathrm{Tr}_{G})$.

Let $e=1-|G|^{-1}\mathrm{Tr}_{G}$, which is a central idempotent of $K[G]$. 
Let  $\pi_{e} : K[G] \rightarrow eK[G]$ be the projection map associated to $e$. 
Given a subset $X\subseteq K[G]$, let $X_{e}=\pi_{e}(X)$
and let $X^{{1-e}} = X\cap \ker(\pi_{e})$.
In particular, $K[G]_{e} = eK[G] \cong K[G]/(\mathrm{Tr}_{G})$ and $K[G]^{{1-e}}=\mathrm{Tr}_{G}K$.

Let $\Lambda=\OO[G]$ and let $\MM$ be a maximal 
$\OO$-order in $K[G]$ containing $\Lambda$. 
Then $\MM_e = e\MM$ is a maximal $\OO$-order of $K[G]_e$ containing $\Lambda_e$. 
By Proposition \ref{prop:index-in-two-orders},
$\lMM{\Lambda}$ is a two-sided ideal of $\MM$ contained in $\Lambda$.
Hence $(\leftindex^{{\MM}}{\Lambda})_{e}$ is a choice of two-sided ideal of $\MM_e$ contained in
$\Lambda_e$. This is not necessarily the largest such choice, but its form allows us to make 
use of our previous computations.

\begin{lemma}\label{l:augmentationindexcomp}
We have $[\MM_e:\Lambda_e]_\OO=|G|^{-1}[\MM:\Lambda]_\OO$ and $[\Lambda_e:(\leftindex^{{\MM}}{\Lambda})_{e}]_\OO = [\MM:\Lambda]_\OO$.
\end{lemma}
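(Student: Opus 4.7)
The plan is to apply Lemma \ref{lem:index-diagram} to two commutative diagrams of short exact sequences built from $\pi_e$. For the first identity, I would use the diagram with rows $0\to\Lambda^{1-e}\to\Lambda\xrightarrow{\pi_e}\Lambda_e\to 0$ and $0\to\MM^{1-e}\to\MM\xrightarrow{\pi_e}\MM_e\to 0$ linked by $\Lambda\subseteq\MM$; for the second, the analogous diagram with top row $0\to(\lMM{\Lambda})^{1-e}\to\lMM{\Lambda}\xrightarrow{\pi_e}(\lMM{\Lambda})_e\to 0$ and bottom row $0\to\Lambda^{1-e}\to\Lambda\xrightarrow{\pi_e}\Lambda_e\to 0$ linked by $\lMM{\Lambda}\subseteq\Lambda$. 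Exactness holds by the very definitions of $(-)^{1-e}$ and $(-)_e$, and in each case the three pairs of vertical arrows rationalise to the same subspace of $K[G]$ (namely $\Tr_G K$, $K[G]$, and $K[G]_e=eK[G]$), so the generic-fibre hypothesis of Lemma \ref{lem:index-diagram} is satisfied.

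For the first diagram, note that $\Lambda^{1-e}=\OO[G]\cap\Tr_G K=\Tr_G\OO$, since any element of $K[G]$ lying in $\Tr_G K$ has equal coefficients on all group elements. Since $|G|^{-1}\Tr_G$ is a central idempotent of $K[G]$, it is integral over $\OO$ and hence lies in the centre of $\MM$; this yields $\MM=\MM_e\oplus\MM^{1-e}$, where $\MM^{1-e}=(1-e)\MM$ is the unique maximal $\OO$-order in the field $\Tr_G K$ and therefore equals $|G|^{-1}\Tr_G\,\OO$. A one-line scaling computation then gives $[\MM^{1-e}:\Lambda^{1-e}]_\OO=|G|\OO$, and Lemma \ref{lem:index-diagram} rearranges to the first identity.

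For the second diagram, I would apply Jacobinski's formula (Theorem \ref{thm:Jacobinski}) to identify the trivial-character component of $\lMM{\Lambda}$. Writing $K[G]=\bigoplus_{i=1}^t A_i$ with $A_1=\Tr_G K$ the component annihilated by $\pi_e$, we have $n_1=1$, $K_1=K$, $\MM_1=|G|^{-1}\Tr_G\,\OO$ and $\mathcal{D}_1^{-1}=\MM_1$, so $(\lMM{\Lambda})^{1-e}=|G|n_1^{-1}\mathcal{D}_1^{-1}=\Tr_G\OO=\Lambda^{1-e}$. Thus the left-hand vertical arrow in the second diagram is the identity, contributing index $\OO$, and Lemma \ref{lem:index-diagram} reduces the claim to $[\Lambda:\lMM{\Lambda}]_\OO=[\Lambda_e:(\lMM{\Lambda})_e]_\OO$; Proposition \ref{prop:index-in-two-orders} now identifies the left-hand side with $[\MM:\Lambda]_\OO$ to finish. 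The one genuine subtlety is the on-the-nose match $(\lMM{\Lambda})^{1-e}=\Lambda^{1-e}$: a cruder estimate (e.g.\ from Corollary \ref{cor:bddbyconductor}) would leave a residual factor of $|G|$, so the exact form of Jacobinski's formula is essential here.
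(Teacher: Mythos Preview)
Your proof is correct and follows the same overall architecture as the paper's: the same two commutative diagrams with exact rows, the same application of Lemma~\ref{lem:index-diagram}, and the same appeal to Proposition~\ref{prop:index-in-two-orders} at the end. The first identity is handled identically.

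The only substantive difference is in the second identity, specifically in establishing $(\lMM{\Lambda})^{1-e}=\Lambda^{1-e}$. You invoke Jacobinski's formula (Theorem~\ref{thm:Jacobinski}) to compute the trivial-character component explicitly, and your computation is correct. The paper, however, avoids Jacobinski entirely here: it observes that $(\lMM{\Lambda})^{1-e}$ is the largest $\MM^{1-e}$-sublattice of $\Lambda^{1-e}$ (this follows from the maximality defining $\lMM{\Lambda}$ together with the decomposition $\MM=\MM_e\oplus\MM^{1-e}$), and since $\MM^{1-e}\cong\OO$, the $\OO$-lattice $\Lambda^{1-e}$ is already an $\MM^{1-e}$-lattice, forcing equality. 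So your remark that ``the exact form of Jacobinski's formula is essential here'' overstates the matter: the paper's argument is strictly more elementary and would work for any central idempotent whose complementary component of $\MM$ is $\OO$, without any explicit conductor formula. That said, your route is perfectly valid and arguably more concrete.
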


\begin{proof}
Consider the following diagram of $\OO$-lattices with exact rows
\[
\begin{tikzcd}
0 \ar{r}&\ar{r}  \Lambda^{1-e}  \ar[hookrightarrow]{d}  &\Lambda\ar{r} \ar[hookrightarrow]{d} &\ar{r}\Lambda_e  \ar[hookrightarrow]{d} &0\\
0 \ar{r} & \MM^{1-e} \ar{r} & \MM\ar{r}& \ar{r} \MM_e &0.
\end{tikzcd}
\]
Then by Lemma \ref{lem:index-diagram} we have
$[\MM:\Lambda ]_\OO 
=[\MM^{1-e}: \Lambda^{1-e}]_\OO \cdot 
[\MM_e:\Lambda_e]_\OO$. 
Note that
$\MM^{1-e}=\left(|G|^{-1}\mathrm{Tr}_{G} \right)\cdot \OO$ and 
$\Lambda^{1-e}=\MM^{1-e}\cap \Lambda
=\mathrm{Tr}_{G} \cdot \OO$. 
Hence $[\MM^{1-e}:\Lambda^{1-e}]_\OO= |G|$, and so we obtain the first equality.

Similarly, we also have the following diagram of $\OO$-lattices with exact rows
\begin{equation}\label{eq:1-e-diag}
\begin{tikzcd}
0 \ar{r}&\ar{r}  (\leftindex^{\MM}{\Lambda})^{1-e}  \ar[hookrightarrow]{d}  &\lMM{\Lambda}\ar{r} \ar[hookrightarrow]{d} &\ar{r}(\lMM{\Lambda})_e  \ar[hookrightarrow]{d} &0\\
0 \ar{r} & \Lambda^{1-e} \ar{r} & \Lambda\ar{r}& \ar{r} \Lambda_e &0.
\end{tikzcd} 
\end{equation}
Then by Lemma \ref{lem:index-diagram} we have 
$[\Lambda : \lMM{\Lambda}]_{\OO} = [\Lambda^{1-e} :  (\leftindex^{\MM}{\Lambda})^{1-e}]_{\OO} 
\cdot [\Lambda_{e} : (\lMM{\Lambda})_e ]_{\OO}$.
By maximality of $\lMM{\Lambda}$, the subset 
$(\leftindex^{\MM}{\Lambda})^{1-e}$ is the largest $\MM^{1-e}$-sublattice contained in $\Lambda^{1-e}$. 
Since $\MM^{1-e}\cong\OO$, we find that $\Lambda^{1-e}$, an $\OO$-lattice, is already a $\MM^{1-e}$-sublattice so that the left vertical map 
of \eqref{eq:1-e-diag} is an equality.
Hence we have 
$[\Lambda_e:(\leftindex^{{\MM}}{\Lambda})_{e}]_\OO= [\Lambda : \lMM{\Lambda}]_\OO$.
But $[\Lambda : \lMM{\Lambda}]_{\OO} = [\mathcal{M} : \Lambda]_{\OO}$ by 
Proposition \ref{prop:index-in-two-orders}, and thus we obtain the desired result.
\end{proof}

\begin{theorem}\label{thm:augmain}
Let $\mathcal{O}$ be a Dedekind domain with field of fractions $K$. 
Assume that $K$ is a global field and that $\mathcal{O} \neq K$.
Let $G$ be a finite group such that $|G|$ is invertible in $K$.
Set $s=2$ if $G$ is abelian and $s=3$ otherwise.
Let $\mathcal{M}$ be a maximal $\mathcal{O}$-order in $K[G]$ containing $\mathcal{O}[G]$.
Let $\mathcal{K}$ be any nonzero ideal of $\mathcal{O}$.
Then there exists a nonzero ideal $\mathcal{I}$ of $\mathcal{O}$,
that can be chosen to be coprime to $\mathcal{K}$, with the following property:
given any $\mathcal{O}[G]/(\Tr_G)$-lattice 
$X$ such that $KX$ is free of rank $n$ over $K[G]/(\Tr_{G})$,
there exists a free $\mathcal{O}[G]/(\Tr_G)$-sublattice 
$Z$ of $X$ such that $[X : Z]_{\mathcal{O}}$ divides 
$\mathcal{I} \cdot |G|^{-2n} \cdot [\mathcal{M} : \mathcal{O}[G]]_{\mathcal{O}}^{sn}$.
Moreover, if $\mathcal{M}$ satisfies the equivalent conditions of Lemma \ref{lem:equiv-locally-free-implies-free}, then we can take $\mathcal{I}=\mathcal{O}$.
\end{theorem}

\begin{proof}
Let $\Lambda=\mathcal{O}[G]$. 
Then $\MM_e$ is a maximal $\OO$-order of $K[G]_e$ containing $\Lambda_e = \OO[G]/(\Tr_G)$. 
Note that if $\mathcal{M}$ satisfies the equivalent conditions of 
Lemma \ref{lem:equiv-locally-free-implies-free}, then so does $\MM_e$.
By Lemma \ref{l:augmentationindexcomp} we have 
$[\mathcal{M}_{e} : \Lambda_{e}]_{\mathcal{O}} = |G|^{-1} [\mathcal{M} : \Lambda]_{\mathcal{O}}$.
By Proposition \ref{prop:index-in-two-orders}, $J:=(\leftindex^{\MM}{\Lambda})_e$
is a two-sided ideal of $\MM_e$ contained in $\Lambda_e$. 
Then we have
\[
[\mathcal{M}_{e} : \Lambda_{e}]_{\mathcal{O}} \cdot
[\mathcal{M}_{e} : J]_{\mathcal{O}} 
= [\mathcal{M}_{e} : \Lambda_{e}]_{\mathcal{O}}^{2} \cdot [\Lambda_{e} : J]_{\mathcal{O}}
= |G|^{-2} \cdot [\mathcal{M} : \Lambda]_{\mathcal{O}}^{3}.
\]
Therefore we obtain the desired result by applying Theorem \ref{thm:main} for the $\OO$-order $\Lambda_e$, 
the maximal $\OO$-order $\MM_e$ and the  ideal $J$.
\end{proof}

\section{Application: approximation of normal integral bases}\label{sec:application-approximation-of-normal-integral-bases}\label{sec:approx-NIB}

We refer the reader to \cite{MR717033} for an overview of normal integral bases, 
on which there is a vast literature. 
In this section, we consider examples of applications of the algebraic machinery of previous sections
to the approximation of normal integral bases.

Beyond the base field and the isomorphism type of the Galois group, the following result does not use any arithmetic information about the Galois extensions concerned.

\begin{theorem}\label{thm:approx-of-NIBS-wild}
Let $K$ be a number field and let $\mathcal{K}$ be any nonzero ideal of $\mathcal{O}_{K}$.
Let $G$ be a finite group and let $\mathcal{M}$ be a maximal $\mathcal{O}$-order in $K[G]$ 
containing $\mathcal{O}_{K}[G]$.
Set $s=2$ if $G$ is abelian and $s=3$ otherwise.
Then there exists a nonzero ideal $\mathcal{I}$ of $\mathcal{O}_{K}$,
that can be chosen to be coprime to $\mathcal{K}$, with the following property:
given any Galois extension $L/K$ with $\Gal(L/K) \cong G$, 
there exists $\alpha \in \mathcal{O}_{L}$ such that 
$[\mathcal{O}_{L} : \mathcal{O}_{K}[\Gal(L/K)] \cdot \alpha ]_{\mathcal{O}}$ divides
$\mathcal{I} \cdot [\mathcal{M} : \mathcal{O}_{K}[G]]_{\mathcal{O}_{K}}^{s} $. Moreover, if $\mathcal{M}$ satisfies the equivalent conditions of Lemma \ref{lem:equiv-locally-free-implies-free}, then we can take $\mathcal{I}=\mathcal{O}_{K}$. 
\end{theorem}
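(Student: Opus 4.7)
The plan is to deduce the theorem directly from Theorem \ref{thm:gpringmain}, with the normal basis theorem as the sole additional ingredient.

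First, I would observe that for any finite Galois extension $L/K$ with $\Gal(L/K) \cong G$, the ring of integers $\mathcal{O}_L$ is finitely generated and torsion-free over $\mathcal{O}_K$, and the natural action of $\Gal(L/K)$ makes it an $\mathcal{O}_K[\Gal(L/K)]$-lattice. Fixing any isomorphism $\Gal(L/K) \cong G$, we view $\mathcal{O}_L$ as an $\mathcal{O}_K[G]$-lattice. The classical normal basis theorem asserts precisely that $L = K \otimes_{\mathcal{O}_K} \mathcal{O}_L$ is free of rank $1$ over $K[\Gal(L/K)] \cong K[G]$, verifying the hypothesis needed to apply the main results of \S\ref{sec:group-rings}.

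Next, I would apply Theorem \ref{thm:gpringmain} with $\mathcal{O} = \mathcal{O}_K$, rank $n = 1$, the prescribed ideal $\mathcal{K}$, and the given maximal order $\mathcal{M}$; note that $|G|$ is automatically invertible in the number field $K$, so the running hypothesis of that theorem is satisfied. This produces a non-zero ideal $\mathcal{I}$ of $\mathcal{O}_K$, coprime to $\mathcal{K}$, which has the crucial uniformity property: the \emph{same} $\mathcal{I}$ works for every $\mathcal{O}_K[G]$-lattice $X$ with $KX$ free of rank $1$ over $K[G]$. Specialising to $X = \mathcal{O}_L$ yields a free rank $1$ $\mathcal{O}_K[G]$-sublattice $Z \subseteq \mathcal{O}_L$ with $[\mathcal{O}_L : Z]_{\mathcal{O}_K}$ dividing $\mathcal{I} \cdot [\mathcal{M} : \mathcal{O}_K[G]]_{\mathcal{O}_K}^{s}$. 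Any generator $\alpha \in \mathcal{O}_L$ of $Z$ then satisfies $Z = \mathcal{O}_K[\Gal(L/K)] \cdot \alpha$ and delivers the desired bound. The final clause, asserting that $\mathcal{I} = \mathcal{O}_K$ is permissible when $\mathcal{M}$ satisfies the equivalent conditions of Lemma \ref{lem:equiv-locally-free-implies-free}, is inherited from the corresponding clause of Theorem \ref{thm:gpringmain}.

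The proof is essentially a direct application, and I do not anticipate any substantive obstacle; the heavy algebraic lifting is already accomplished in Theorem \ref{thm:gpringmain}. The only point worth flagging is the uniformity of $\mathcal{I}$ across the (generally infinite) family of Galois extensions $L/K$ with $\Gal(L/K) \cong G$: this is built into Theorem \ref{thm:gpringmain} precisely because the ideal produced there depends only on the ambient algebraic data $(\mathcal{O}_K, G, n, \mathcal{K}, \mathcal{M})$ and not on the particular lattice $X$, so a single choice of $\mathcal{I}$ serves uniformly.
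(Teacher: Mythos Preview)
Your proposal is correct and follows exactly the same approach as the paper: invoke the normal basis theorem to see that $K\mathcal{O}_{L}\cong K[G]$, then apply Theorem~\ref{thm:gpringmain} with $\mathcal{O}=\mathcal{O}_{K}$ and $n=1$. Your write-up simply unpacks a few details (such as $|G|$ being invertible in a number field and the uniformity of $\mathcal{I}$) that the paper leaves implicit.
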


\begin{proof}
The normal basis theorem says that for a finite Galois extension of fields $L/K$
we have $L \cong K[\Gal(L/K)]$ as $K[\Gal(L/K)]$-modules.
Therefore the desired result now follows easily from Theorem \ref{thm:gpringmain} with $n=1$
and $\mathcal{O}=\mathcal{O}_{K}$.
\end{proof}

\begin{remark}\label{rmk:crude-ZG-max-index-bound}
An explicit formula for $[\mathcal{M} : \mathcal{O}_{K}[G]]_{\mathcal{O}_{K}}$ is given in Corollary \ref{cor:index-formula}. 
In particular, a weak but general bound is 
that $[\mathcal{M} : \mathcal{O}_{K}[G]]_{\mathcal{O}_{K}}^{s}$ divides $|G|^{\lceil s|G|/2 \rceil}$.
\end{remark}

By making the further assumption that the extensions concerned are at most tamely ramified, we obtain the following result with a stronger conclusion. 

\begin{theorem}\label{thm:approx-of-NIBS-tame}
Let $K$ be a number field, let $\mathcal{K}$ be any nonzero ideal of $\mathcal{O}_{K}$,
and $G$ be a finite group.
Then there exists a nonzero ideal $\mathcal{I}$ of $\mathcal{O}_{K}$,
that can be chosen to be coprime to $\mathcal{K}$, with the following property:
given any at most tamely ramified Galois extension $L/K$ with $\Gal(L/K) \cong G$, 
there exists $\alpha \in \mathcal{O}_{L}$ such that 
$[\mathcal{O}_{L} : \mathcal{O}_{K}[\Gal(L/K)] \cdot \alpha ]_{\mathcal{O}_{K}}$ divides
$\mathcal{I}$. 
Moreover, if $\mathcal{O}_{K}[G]$ satisfies the equivalent conditions of Lemma \ref{lem:equiv-locally-free-implies-free}, then we can take $\mathcal{I}=\mathcal{O}_{K}$.  
\end{theorem}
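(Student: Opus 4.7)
The plan is to recognise that Theorem \ref{thm:approx-of-NIBS-tame} is essentially an immediate consequence of Proposition \ref{prop:free-sublattice-of-lf} once one invokes Noether's theorem on the module structure of tame extensions. The gain over Theorem \ref{thm:approx-of-NIBS-wild} comes from replacing the detour through a maximal order $\mathcal{M}$ (needed in the wild case, where $\mathcal{O}_L$ need not be $\mathcal{O}_K[G]$-locally free) with a direct application at the group ring $\mathcal{O}_K[G]$ itself.

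First, I would recall Noether's theorem: for a finite Galois extension $L/K$ of number fields that is at most tamely ramified, $\mathcal{O}_L$ is a locally free $\mathcal{O}_K[\Gal(L/K)]$-module of rank $1$. (The rank-$1$ assertion follows from the normal basis theorem, which gives $L \cong K[\Gal(L/K)]$ as $K[\Gal(L/K)]$-modules after tensoring with $K$.) Thus, for every tame $L/K$ with $\Gal(L/K)\cong G$, the $\mathcal{O}_K[G]$-lattice $\mathcal{O}_L$ is locally free of rank $1$.

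Next, I would apply Proposition \ref{prop:free-sublattice-of-lf} to the $\mathcal{O}_K$-order $\Gamma = \mathcal{O}_K[G]$ with the given ideal $\mathcal{K}$. This yields a non-zero ideal $\mathcal{I}$ of $\mathcal{O}_K$, coprime to $\mathcal{K}$, such that every locally free $\mathcal{O}_K[G]$-lattice $X$ contains a free $\mathcal{O}_K[G]$-sublattice $Y$ with $[X:Y]_{\mathcal{O}_K}$ dividing $\mathcal{I}$. Crucially, $\mathcal{I}$ depends only on $K$, $G$, and $\mathcal{K}$, not on the extension $L$. Specialising to $X = \mathcal{O}_L$, the free sublattice $Y$ has rank $1$ and hence is of the form $Y = \mathcal{O}_K[G]\cdot \alpha$ for some $\alpha \in Y \subseteq \mathcal{O}_L$, giving the desired index bound.

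For the final clause, if $\mathcal{O}_K[G]$ satisfies the equivalent conditions of Lemma \ref{lem:equiv-locally-free-implies-free}, then by Remark \ref{rmk:I-trivial} one may take $\mathcal{I} = \mathcal{O}_K$; equivalently, every locally free $\mathcal{O}_K[G]$-lattice is in fact free, so $\mathcal{O}_L = \mathcal{O}_K[G]\cdot\alpha$ for some $\alpha$, i.e.\ $L/K$ admits a normal integral basis. There is no real obstacle here: the whole argument is a direct combination of Noether's theorem with Proposition \ref{prop:free-sublattice-of-lf}; the only subtlety worth flagging is that tameness is used solely to guarantee locally freeness of $\mathcal{O}_L$, which is what allows us to bypass the extra $[\mathcal{M}:\mathcal{O}_K[G]]_{\mathcal{O}_K}^{s}$ factor present in Theorem \ref{thm:approx-of-NIBS-wild}.
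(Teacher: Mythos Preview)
Your proposal is correct and matches the paper's own proof essentially line for line: the paper cites Noether's theorem (via \cite[Chapter I, \S 3, Corollary 2]{MR717033}) to obtain that $\mathcal{O}_L$ is locally free of rank $1$ over $\mathcal{O}_K[G]$, and then applies Proposition~\ref{prop:free-sublattice-of-lf} with $\Gamma=\mathcal{O}_K[G]$, exactly as you do. Your invocation of Remark~\ref{rmk:I-trivial} for the final clause is also what the paper intends, though it leaves this implicit.
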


\begin{proof}
For an at most tamely ramified Galois extension $L/K$ with $\Gal(L/K) \cong G$,
we have that $\mathcal{O}_{L}$ is a locally free $\mathcal{O}_{K}[G]$-lattice of rank $1$
by \cite[Chapter I, \S 3, Corollary 2]{MR717033}, for example.
Therefore the desired result now follows easily from Proposition \ref{prop:free-sublattice-of-lf} with $\mathcal{O}=\mathcal{O}_{K}$ and $\Gamma=\mathcal{O}_{K}[G]$.
\end{proof}

\begin{remark}
Improved bounds can be obtained in special cases. 
For example, if $G$ is a finite group with no irreducible symplectic characters 
(e.g.\ $G$ is abelian or of odd order), 
then every (at most) tamely ramified Galois extension $L/\Q$ with $\Gal(L/\Q) \cong G$ has a normal integral basis by a special case of Taylor's proof \cite{MR608528}
of a conjecture of Fr\"ohlich (see \cite[\S I]{MR717033} for an overview).
Moreover, if $G$ is a finite abelian group, then Leopoldt's theorem \cite{MR108479}
(see also \cite{MR1037435}) implies that for every Galois extension 
$L/\Q$ with $\Gal(L/\Q) \cong G$, there exists $\alpha \in \mathcal{O}_{L}$
such that $[\mathcal{O}_{L} : \Z[\Gal(L/\Q)] \cdot \alpha]$ divides $[\mathcal{M} : \Z[G]]$,
where $\mathcal{M}$ is the unique maximal $\Z$-order in $\Q[G]$. 
By contrast, Theorems \ref{thm:approx-of-NIBS-wild} and \ref{thm:approx-of-NIBS-tame}
are very general and their short proofs use little or no arithmetic information about the particular field extensions concerned.
\end{remark}

\section{Application: approximation of strong Minkowski units}\label{sec:application-approximation-of-strong-minkowski-units}

In this section, we consider examples of applications of the algebraic machinery of previous sections
to the approximation of strong Minkowski units.

\begin{definition}\label{def:Minkowski-unit}
Let $L/K$ be a Galois extension of number fields and let $G=\Gal(L/K)$.
Let $\mu_{L}$ denote the roots of unity of $L$.
An element $\varepsilon \in \mathcal{O}_{L}^{\times} / \mu_{L}$ is said to be
\begin{enumerate}
\item a \textit{Minkowski unit} of $L/K$ if $\Q \otimes_{\Z} (\mathcal{O}_{L}^{\times}/\mu_{L}) = \Q[G] \cdot \varepsilon$,
\item a \textit{strong Minkowski unit} of $L/K$ if $\mathcal{O}_{L}^{\times} / \mu_{L} = \Z[G] \cdot \varepsilon$.
\end{enumerate}
\end{definition}

The following result is well known.

\begin{lemma}\label{lem:Hebrand-special-case}
Let $L/K$ be a Galois extension of number fields and let $G=\Gal(L/K)$.
If $K$ is equal to either $\Q$ or an imaginary quadratic field then $L/K$ has a Minkowski unit.
Moreover, if either $L$ is totally real or $K$ is imaginary quadratic, then 
$\Q \otimes_{\Z} (\mathcal{O}_{L}^{\times}/\mu_{L}) \cong \Q[G]/(\Tr_{G})$ as  
$\Q[G]/(\Tr_{G})$-modules (and as $\Q[G]$-modules).
\end{lemma}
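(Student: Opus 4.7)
The plan is to reduce the entire lemma to a well-understood $\Q[G]$-module computation via Dirichlet's unit theorem and Herbrand's rationality argument. The logarithmic embedding $\log\colon \mathcal{O}_L^{\times} \to \R^{S_\infty(L)}$, $u\mapsto (\log|u|_v)_{v\in S_\infty(L)}$, where $S_\infty(L)$ is the set of infinite places of $L$ equipped with its natural $G$-action, is $G$-equivariant with kernel $\mu_L$ and image a full $\Z$-lattice in the hyperplane $\ker(\Sigma)\subset\R^{S_\infty(L)}$. This yields an $\R[G]$-isomorphism $\R\otimes_\Z(\mathcal{O}_L^{\times}/\mu_L)\cong \ker(\Sigma)$, and semisimplicity of $\Q[G]$ (whereby $\Q[G]$-modules are classified by their characters) descends this to a $\Q[G]$-isomorphism
\[
\Q\otimes_\Z(\mathcal{O}_L^{\times}/\mu_L) \;\cong\; \ker\bigl(\aug\colon \Q[S_\infty(L)]\to\Q\bigr).
\]

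Next I would unwind the permutation $G$-set as $S_\infty(L) = \bigsqcup_{v\in S_\infty(K)} S_\infty(L)_v$, one orbit per infinite place of $K$, with each orbit $G$-isomorphic to $G/G_w$ for a chosen $w\mid v$, where $G_w$ is the decomposition group. Hence $\Q[S_\infty(L)] \cong \bigoplus_{v\in S_\infty(K)} \Q[G/G_w]$. For the ``moreover'' clause, read in the context of the preceding biconditional as meaning $K=\Q$ with $L$ totally real, or $K$ imaginary quadratic, we have $|S_\infty(K)|=1$ and the unique decomposition group is trivial (no real place of $K$ becomes complex in $L$). Thus $\Q[S_\infty(L)]\cong \Q[G]$ and $\ker(\aug)=I_G$, the augmentation ideal. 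The Wedderburn splitting $\Q[G] = \Q\cdot |G|^{-1}\Tr_G \oplus I_G$ induced by the trivial-representation central idempotent identifies $I_G$ with $\Q[G]/(\Tr_G)$, which settles the ``moreover'' statement.

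For the biconditional, the ``if'' direction is a corollary of the same decomposition: when $|S_\infty(K)|=1$, the module $\Q[S_\infty(L)]\cong \Q[G/G_w]$ is cyclic over $\Q[G]$, and by semisimplicity $\ker(\aug)$ is a direct summand—hence a quotient of $\Q[G/G_w]$, hence cyclic—so a generator lifts to a Minkowski unit. For the ``only if'' I would argue contrapositively: if $|S_\infty(K)|\geq 2$, then Frobenius reciprocity shows each summand $\Q[G/G_w]$ contains the trivial $\Q[G]$-representation with multiplicity exactly one, so $\ker(\aug)$ contains the trivial representation with multiplicity $|S_\infty(K)|-1$. Since the trivial representation appears in $\Q[G]$ with multiplicity only one, this immediately obstructs cyclicity as soon as $|S_\infty(K)|\geq 3$.

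The main obstacle is the borderline case $|S_\infty(K)|=2$, where the trivial-component count is just insufficient. The plan is then to sharpen by comparing the dimension $\dim_\Q\ker(\aug) = |G|\sum_{v\in S_\infty(K)}|G_w|^{-1} - 1$ with $|G|$, the maximum dimension of a cyclic $\Q[G]$-module, showing that cyclicity fails outside a short list of degenerate configurations of decomposition groups which are excluded by the paper's implicit non-triviality assumption on $L/K$.
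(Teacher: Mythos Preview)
Your setup via the logarithmic embedding and the identification $\Q\otimes_\Z(\mathcal{O}_L^\times/\mu_L)\cong\ker(\aug\colon\Q[S_\infty(L)]\to\Q)$ is precisely Herbrand's theorem as the paper invokes it, and your treatment of the ``if'' direction and the ``moreover'' clause matches the paper's argument: when $K$ has a unique infinite place that splits completely in $L$, one has $\Q[S_\infty(L)]\cong\Q[G]$ and hence $\ker(\aug)\cong\Q[G]/(\Tr_G)$.

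For the ``only if'' direction the paper takes a shorter route than you. Rather than decomposing into orbits and counting multiplicities of the trivial representation inside $\ker(\aug)$, it asserts in one line that cyclicity of $\Q\otimes_\Z\mathcal{O}_L^\times$ is equivalent to cyclicity of $\Q[S_\infty]$ (the two differ by a trivial summand), and then uses that a permutation module $\Q[X]$ is cyclic over $\Q[G]$ if and only if $X$ is a transitive $G$-set, which here means $|S_\infty(K)|=1$. This sidesteps your multiplicity bookkeeping entirely.

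Your instinct that the case $|S_\infty(K)|=2$ is genuinely problematic is correct, and your proposed dimension comparison does not close it. Take $K=\Q(\sqrt{2})$ and $L=K(i)=\Q(\zeta_8)$, so $G\cong C_2$ is generated by complex conjugation. Both real places of $K$ become complex in $L$, so both decomposition groups equal $G$ and $\dim_\Q\ker(\aug)=1$; one has $\mathcal{O}_L^\times/\mu_L\cong\Z$ with trivial $G$-action, so $\Q\otimes_\Z(\mathcal{O}_L^\times/\mu_L)\cong\Q$ is visibly cyclic over $\Q[G]$ and a Minkowski unit exists, yet $K$ is real quadratic. This extension is nontrivial, so no implicit non-triviality hypothesis rescues the claim. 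In fact the step from cyclicity of $\ker(\aug)$ to cyclicity of $\Q[S_\infty]$ that the paper makes without comment also fails in this example, so the difficulty you isolated is intrinsic to the ``only if'' assertion rather than to your method. Fortunately, only the ``if'' direction and the ``moreover'' clause are used in the downstream applications.
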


\begin{proof}
By definition, $L/K$ has a Minkowski unit if and only if $\Q \otimes_{\Z} \mathcal{O}_{L}^{\times}$ is cyclic as a $\Q[G]$-module. By a theorem of Herbrand (see \cite[Chapter I, \S 4.3]{MR782485}, for example)
there is an isomorphism $(\Q\otimes_{\Z} \mathcal{O}_{L}^{\times}) \oplus \Q \cong \Q[S_{\infty}]$
of $\Q[G]$-modules, where $S_{\infty}$ denotes the set of infinite places of $L$. 
So the existence of a Minkowski unit is implied by 
$\Q[S_{\infty}]$ being cyclic as a $\Q[G]$-module, which is equivalent to $S_{\infty}$ being transitive as a $G$-set. This occurs precisely when $K$ has a unique infinite place. 
If either $L$ is totally real or $K$ is imaginary quadratic, then the unique infinite place of $K$ splits completely in $L/K$ and thus $\Q[S_{\infty}] \cong \Q[G]$ as $\Q[G]$-modules. 
\end{proof}

\begin{remark}\label{rmk:exist-strong-Minkowski}
The existence of strong Minkowski units
(which some authors refer to as Minkowski units)
in special cases has been studied in numerous articles,
including \cite{MR0404203, MR0376610, MR526783, MR612684, MR526781, MR537446, MR729740, MR0374084, MR971088, MR1089790, MR1173123, MR1189093, MR2039416, MR2152220, MR2747023, MR2865419},
as well as \cite[\S 3.3 \& \S 3.5.1]{MR2078267} and the references therein.
Also see the articles cited below.
\end{remark}

\begin{remark}
If $L/\Q$ is a Galois extension with $[L:\Q]$ odd, then $L$ is totally real and
$\mathcal{O}_{L}^{\times} / \mu_{L} \cong 
\{ u \in \mathcal{O}_{L}^{\times} : \mathrm{Norm}_{L/\Q}(u) =1 \}$ as $\Z[\Gal(L/\Q)]$-lattices.
\end{remark}

The following result is a strong refinement and generalisation
of \cite[Theorem 1]{MR1418221}, \cite[Proposition 5.2]{MR3035966} and \cite[Theorem 3.3]{MR3145320}, which only consider finite cyclic or abelian totally real extensions of $\Q$ 
and do not actually bound the index in question.

\begin{theorem}\label{thm:main-Minkowski}
Let $G$ be a finite group and let $k$ be a positive integer. 
Set $s=2$ if $G$ is abelian and $s=3$ otherwise.
Then there exists a positive integer $i$, which can be chosen to be coprime to $k$, with the following property:
given any finite Galois extension $L/K$ with $\Gal(L/K)\cong G$ and $K$ equal to either $\Q$ or an imaginary quadratic field,
there exists a Minkowski unit $\varepsilon \in \mathcal{O}_{L}^{\times}/\mu_{L}$ 
such that $[\mathcal{O}_{L}^{\times}/\mu_{L} : \Z[\Gal(L/K)] \cdot \varepsilon]$ divides 
$i \cdot |G|^{-2} [\mathcal{M} : \Z[G]]^{s}$, 
where $\mathcal{M}$ is a maximal $\Z$-order in $\Q[G]$ containing $\Z[G]$.
\end{theorem}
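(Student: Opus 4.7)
The plan is to apply Theorem \ref{thm:augmain} with $\mathcal{O}=\Z$ and $n=1$ to $X := \mathcal{O}_L^\times/\mu_L$, viewed as a $\Z[G]/(\Tr_G)$-lattice, together with a non-free extension in the spirit of Remark \ref{rmk:non-free}.

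First I would check that $X$ really is a $\Z[G]/(\Tr_G)$-module. Writing the $G$-action multiplicatively, $\Tr_G$ sends $u \bmod \mu_L$ to $\prod_{\sigma \in G} \sigma(u) = N_{L/K}(u) \in \mathcal{O}_K^\times$. Under the hypothesis that $K = \Q$ or $K$ is imaginary quadratic, $\mathcal{O}_K^\times$ is finite and hence contained in $\mu_L$, so $\Tr_G$ annihilates $X$. Next, Lemma \ref{lem:Hebrand-special-case} combined with the same hypothesis ensures that $L/K$ admits a Minkowski unit; equivalently, there is a surjection $\Q[G]/(\Tr_G) \twoheadrightarrow \Q X$ of $\Q[G]/(\Tr_G)$-modules.

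I would then invoke the non-free extension of Theorem \ref{thm:augmain} obtained as in Remark \ref{rmk:non-free}. Since $\Q[G]/(\Tr_G)$ is semisimple (as $|G|$ is invertible in $\Q$), pick a full $\Z[G]/(\Tr_G)$-lattice $W$ in a complement of $\Q X$ inside $\Q[G]/(\Tr_G)$; then $X \oplus W$ is rationally free of rank $1$ over $\Q[G]/(\Tr_G)$. Applying Theorem \ref{thm:augmain} to $X \oplus W$ (with $n=1$ and $\mathcal{K} = k\Z$) produces a positive integer $i$ coprime to $k$ and a free $\Z[G]/(\Tr_G)$-sublattice $Z' = (\Z[G]/(\Tr_G)) \cdot (\varepsilon, w)$ of index dividing $i \cdot |G|^{-2}[\MM:\Z[G]]^{s}$. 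Projecting onto $X$ gives $Z := (\Z[G]/(\Tr_G)) \cdot \varepsilon = \Z[G] \cdot \varepsilon$ (the last equality by the first step), and a short computation yields $[X \oplus W : Z'] = [X:Z] \cdot [W : Z' \cap W]$, so $[X:Z]$ divides $i \cdot |G|^{-2}[\MM:\Z[G]]^{s}$. Since $[X:Z]$ is finite, $\Q[G] \cdot \varepsilon = \Q X$, so $\varepsilon$ is a Minkowski unit.

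The main point requiring care is the case $K = \Q$ with $L$ totally complex and $|G|>2$: by Dirichlet's unit theorem, $\rk_\Z X = |G|/2 - 1 < |G| - 1 = \dim_\Q \Q[G]/(\Tr_G)$, so $\Q X$ is only cyclic and not free, and this is precisely where the non-free extension is genuinely needed rather than Theorem \ref{thm:augmain} as stated. In all other cases covered by Lemma \ref{lem:Hebrand-special-case}, $\Q X \cong \Q[G]/(\Tr_G)$ and Theorem \ref{thm:augmain} applies directly.
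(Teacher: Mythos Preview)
Your proposal is correct and follows essentially the same approach as the paper: apply Theorem \ref{thm:augmain} directly when $\Q X \cong \Q[G]/(\Tr_G)$ (i.e.\ when $L$ is totally real or $K$ is imaginary quadratic), and use the complement trick of Remark \ref{rmk:non-free} for the remaining case $K=\Q$ with $L$ totally imaginary. The paper's proof is terser---it simply cites Remark \ref{rmk:non-free} without spelling out the $X \oplus W$ construction or the index factorisation---but the logic is identical; your explicit verification that $\Tr_G$ annihilates $\mathcal{O}_L^{\times}/\mu_L$ is a nice addition.
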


\begin{remark}\label{rmk:weak-max-order-bound-Minkowski}
An explicit formula for $[\mathcal{M} : \Z[G]]$ is given in Corollary \ref{cor:index-formula}. 
In particular, a weak but general bound is 
that $|G|^{-2} [\mathcal{M} : \Z[G]]^{s}$ divides $|G|^{\lceil s|G|/2 \rceil -2}$.
\end{remark}

\begin{remark}
It is interesting to compare Theorem \ref{thm:main-Minkowski} 
with \cite[Theorem 1.1]{MR3610268},
which in the case that $L/\Q$ is a finite Galois extension asserts the existence of a Minkowski unit
$\varepsilon$ such that the index $[\mathcal{O}_{L}^{\times} / \mu_{L} : \Z[\Gal(L/\Q)] \cdot \varepsilon]$ is bounded by an expression involving the Weil height of $\varepsilon$, 
the regulator of $L$, the degree $[L:\Q]$ and $\rank_{\Z} \mathcal{O}_{L}^{\times}$.
\end{remark}

\begin{proof}[Proof of Theorem \ref{thm:main-Minkowski}]
Let $\Gamma = \Z[G] / (\Tr_{G})$. 
By Theorem \ref{thm:augmain} there exists a positive integer $i$,
which can be chosen to be coprime to $k$,
with the following property: given any $\Gamma$-lattice $X$ such that $\Q X \cong  \Q[G] / (\Tr_{G})$
as $\Q[G] / (\Tr_{G})$-modules, 
there exists a free $\Gamma$-sublattice $Y$ of $X$ such that $[X : Y]$ divides 
$i \cdot |G|^{-2} [\mathcal{M} : \Z[G]]^{s}$.
Note that if $\varepsilon \in X$ is a free $\Gamma$-generator of $Y$, 
then $Y=\Gamma \cdot \varepsilon = \Z[G] \cdot \varepsilon$.
For $L/K$ with either $L$ totally real or $K$ imaginary quadratic,
the desired result now follows from Lemma \ref{lem:Hebrand-special-case}
after fixing an isomorphism $\Gal(L/K) \cong G$. 
For $L/K$ with $L$ totally imaginary and $K=\Q$, the result follows from 
 Lemma \ref{lem:Hebrand-special-case} and Remark \ref{rmk:non-free}.
\end{proof}

\begin{corollary}\label{cor:Minkowski-good-M}
Let $G$ be a finite group and let $\mathcal{M}$ be a maximal $\Z$-order in $\Q[G]$ containing $\Z[G]$.
Suppose that $\mathcal{M}$ satisfies the equivalent conditions of Lemma \ref{lem:equiv-locally-free-implies-free} (see \S \ref{subsec:integral-group-rings} for conditions on $G$ under which this holds).
Set $s=2$ if $G$ is abelian and $s=3$ otherwise.
Then given any finite Galois extension $L/K$ with $\Gal(L/K)\cong G$ and $K$ equal to either $\Q$ or an imaginary quadratic field,
there exists a Minkowski unit $\varepsilon \in \mathcal{O}_{L}^{\times}/\mu_{L}$ 
such that $[\mathcal{O}_{L}^{\times}/\mu_{L} : \Z[\Gal(L/K)] \cdot \varepsilon]$ divides 
$|G|^{-2} [\mathcal{M} : \Z[G]]^{s}$.
\end{corollary}

\begin{proof}
Under the hypotheses on $\mathcal{M}$, the desired result follows as in the proof
of Theorem~\ref{thm:main-Minkowski} after noting that we can take $i=1$ in the application of 
Theorem \ref{thm:augmain}.
\end{proof}

\begin{remark}
Let $L/\Q$ be a finite Galois extension such that $L$ is CM and let $L^{+}$ denote its maximal
totally real subfield. Let $\varepsilon \in \mathcal{O}_{L^{+}}^{\times}/\{ \pm 1 \}$ be a Minkowski 
unit of $L^{+}/\Q$ and by abuse of notation let this also denote its image in 
$\mathcal{O}_{L}^{\times}/\mu_{L}$. 
By \cite[Theorem 4.12]{MR1421575}
we have that  $[\mathcal{O}_{L}^{\times} : \mu_{L} \mathcal{O}_{L^{+}}^{\times}]=1$ or $2$, and so
$[\mathcal{O}_{L}^{\times}/\mu_{L} : \Z[\Gal(L/\Q)] \cdot \varepsilon]$ divides 
$2[\mathcal{O}_{L^{+}}^{\times} / \{ \pm 1 \} : \Z[\Gal(L^{+}/\Q)] \cdot \varepsilon]$.
Thus $\varepsilon$ is also a Minkowski unit of $L/\Q$, 
and we obtain stronger analogues of Theorem \ref{thm:main-Minkowski} and Corollary \ref{cor:Minkowski-good-M} in this situation.
\end{remark}

The above results can be strengthened for extensions of prime degree.

\begin{theorem}\label{thm:deg-p-Minkowski}
Let $p$ be an odd prime and let $k$ be a positive integer.
Then there exists a positive integer $i$, which can be chosen 
to be coprime to $k$, with the following property: 
given any cyclic field extension $L/K$ with $[L:K]=p$
and $K$ equal to either $\Q$ or an imaginary quadratic field, 
there exists a Minkowski unit
$\varepsilon \in \mathcal{O}_{L}^{\times} / \mu_{L}$ such that
$[\mathcal{O}_{L}^{\times} / \mu_{L} : \Z[\Gal(L/K)] \cdot \varepsilon]$ divides $i$.
\end{theorem}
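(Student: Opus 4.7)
The plan is to specialise the proof of Theorem \ref{thm:main-Minkowski}, exploiting the fact that for $G = C_p$ with $p$ an odd prime, the quotient $\Z[G]/(\Tr_G)$ is already a Dedekind domain, so the bulk of the index bound disappears. Concretely, if $\sigma$ generates $C_p$ then $\Z[G] = \Z[x]/(x^p-1)$ and $(x-1)\cdot \Tr_G = x^p - 1$, so the map $\sigma \mapsto \zeta_p$ induces $\Z[G]/(\Tr_G) \cong \Z[\zeta_p]$. Since $\Z[\zeta_p]$ is the full ring of integers of $\Q(\zeta_p)$, it is already a maximal $\Z$-order in $\Q[G]/(\Tr_G)$, so the factor $|G|^{-2}[\MM:\Z[G]]^{s}$ appearing in Theorem \ref{thm:main-Minkowski} collapses and we are left needing only a free sublattice of a locally free lattice.

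First I would verify that $\mathcal{O}_L^\times/\mu_L$ is naturally a $\Z[\zeta_p]$-lattice. For any $u \in \mathcal{O}_L^\times$ one has $\Tr_G \cdot u = N_{L/K}(u) \in \mathcal{O}_K^\times$, and in both cases $K = \Q$ and $K$ imaginary quadratic the group $\mathcal{O}_K^\times$ is finite and therefore contained in $\mu_L$. Hence $\Tr_G$ annihilates $\mathcal{O}_L^\times/\mu_L$, giving it the structure of a $\Z[G]/(\Tr_G) = \Z[\zeta_p]$-module. Next, since $[L:K] = p$ is an odd prime and $L/K$ is Galois, complex conjugation must act trivially on $L$ (as $\Gal(L/K) = C_p$ has no element of order $2$), so either $L$ is totally real (forced when $K = \Q$) or $L$ is totally imaginary (when $K$ is imaginary quadratic). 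In both cases Lemma \ref{lem:Hebrand-special-case} gives an isomorphism $\Q \otimes_\Z (\mathcal{O}_L^\times/\mu_L) \cong \Q[G]/(\Tr_G) \cong \Q(\zeta_p)$, free of rank one over $\Q(\zeta_p)$. Being finitely generated and torsion-free over the Dedekind domain $\Z[\zeta_p]$, the module $\mathcal{O}_L^\times/\mu_L$ is a locally free $\Z[\zeta_p]$-lattice of rank one.

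Now I would invoke Proposition \ref{prop:free-sublattice-of-lf} with $\Gamma = \Z[\zeta_p]$ and $\mathcal{K} = k\Z$ to produce a non-zero ideal $\mathcal{I} \subseteq \Z$ coprime to $k$ such that every locally free $\Z[\zeta_p]$-lattice contains a free sublattice of index dividing $\mathcal{I}$. Let $i$ be the positive generator of $\mathcal{I}$. Applying this to $\mathcal{O}_L^\times/\mu_L$ produces an $\varepsilon \in \mathcal{O}_L^\times/\mu_L$ with $[\mathcal{O}_L^\times/\mu_L : \Z[\zeta_p]\cdot\varepsilon]$ dividing $i$; since $\Tr_G$ acts as zero we have $\Z[\Gal(L/K)]\cdot \varepsilon = \Z[\zeta_p]\cdot\varepsilon$, and $\varepsilon$ is the desired Minkowski unit. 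The only point requiring any care is the initial identification of the ambient order as a Dedekind domain together with the case analysis on the signature of $L$; after that the statement follows formally from the machinery already in place, and there is no substantive remaining obstacle.
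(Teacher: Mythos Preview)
Your proof is correct and follows essentially the same approach as the paper's: identify $\Z[G]/(\Tr_G)\cong\Z[\zeta_p]$ as a maximal $\Z$-order, use Lemma~\ref{lem:Hebrand-special-case} (together with the parity observation that forces $L$ totally real when $K=\Q$) to see that $\mathcal{O}_L^\times/\mu_L$ is a rank-one lattice over this order, and then apply the free-sublattice-of-bounded-index machinery. The only cosmetic difference is that the paper cites Corollary~\ref{cor:free-sublattice-of-lf-max-case} (the maximal-order version) rather than Proposition~\ref{prop:free-sublattice-of-lf}, but for a Dedekind domain these coincide.
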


\begin{proof}
Let $G$ be the cyclic group of order $p$ and let $\mathcal{M} = \Z[G] / (\Tr_{G})$. 
Then $\mathcal{M} \cong \Z[\zeta_{p}]$, which is a maximal $\Z$-order.
By Corollary \ref{cor:free-sublattice-of-lf-max-case} there exists a positive integer $i$,
which can be chosen to be coprime to $k$,
with the following property: given any $\mathcal{M}$-lattice $X$ such that $\Q X$
is free of rank $1$ as a $\Q[G] / (\Tr_{G})$-module, 
there exists a free $\mathcal{M}$-sublattice $Y$ of $X$ such that $[X : Y]$ divides $i$.
Note that if $\varepsilon \in X$ is a free $\mathcal{M}$-generator of $Y$, 
then $Y=\mathcal{M} \cdot \varepsilon = \Z[G] \cdot \varepsilon$.
The desired result now follows from Lemma \ref{lem:Hebrand-special-case}
since the hypotheses ensure that either $L$ is totally real or $K$ is imaginary quadratic.
\end{proof}

The following result is not new, as it is the combination of \cite{MR0217045}
(see also \cite[Corollary]{MR0244193}) and an easy consequence of \cite[Th\'eor\`em]{MR589107};
we include it for completeness.

\begin{corollary}\label{cor:deg-p-Minkowski}
Let $p$ be a prime such that $3 \leq p \leq 19$. 
Then every cyclic field extension $L/K$ with $[L:K]=p$ and $K$ equal to either $\Q$ or an imaginary quadratic field has a strong Minkowski unit.
\end{corollary}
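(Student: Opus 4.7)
The plan is to deduce the corollary directly from Theorem \ref{thm:deg-p-Minkowski} by verifying that, under the hypothesis $3 \leq p \leq 19$, the positive integer $i$ appearing there may be taken equal to $1$. Once this is done, the Minkowski unit $\varepsilon$ produced by Theorem \ref{thm:deg-p-Minkowski} satisfies $[\mathcal{O}_L^\times/\mu_L : \Z[\Gal(L/K)] \cdot \varepsilon] \mid 1$, which is precisely the statement that $\varepsilon$ is a strong Minkowski unit.

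Tracing through the proof of Theorem \ref{thm:deg-p-Minkowski}, the constant $i$ enters only via the application of Corollary \ref{cor:free-sublattice-of-lf-max-case} to the maximal $\Z$-order $\mathcal{M} = \Z[G]/(\Tr_G) \cong \Z[\zeta_p]$, where $G$ is cyclic of order $p$. By Remark \ref{rmk:I-trivial}, we may take $i = 1$ as soon as $\Z[\zeta_p]$ satisfies the equivalent conditions of Lemma \ref{lem:equiv-locally-free-implies-free}. I would verify condition (ii) of that lemma: every locally free $\Z[\zeta_p]$-lattice of rank $1$ is free. Since $\Z[\zeta_p]$ is a commutative Dedekind domain, such a lattice is isomorphic to a fractional ideal of $\Z[\zeta_p]$, and it is free (i.e.\ principal) if and only if it represents the trivial element of $\Cl(\Z[\zeta_p])$.

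It therefore suffices to invoke the classical fact that $\Cl(\Z[\zeta_p]) = 0$ for every odd prime $p$ with $p \leq 19$, namely for $p \in \{3,5,7,11,13,17,19\}$; this is part of the list appearing in the set $\Sigma$ of Corollary \ref{cor:trivialclassgroup-abelian-case}, with reference \cite{MR1421575}. Combining this with the commutativity of $\Z[\zeta_p]$ (which ensures the locally free cancellation property automatically, as discussed in \S\ref{subsec:lfcg-sfc}) gives the equivalence of conditions (i)--(iv) of Lemma \ref{lem:equiv-locally-free-implies-free} for $\Z[\zeta_p]$.

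I do not foresee any serious obstacle: the identification $\Q \otimes_{\Z}(\mathcal{O}_L^\times/\mu_L) \cong \Q[G]/(\Tr_G)$ needed to enter Theorem \ref{thm:deg-p-Minkowski} is already supplied by Lemma \ref{lem:Hebrand-special-case} (using that either $L$ is totally real, when $K=\Q$ and $p$ is odd, or $K$ is imaginary quadratic), and the cases $K = \Q$ and $K$ imaginary quadratic are treated uniformly by Theorem \ref{thm:deg-p-Minkowski}. The only nontrivial ingredient is the cyclotomic class number computation, which is classical and cited above.
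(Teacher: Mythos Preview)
Your proposal is correct and follows essentially the same approach as the paper: both argue that for $3 \le p \le 19$ the maximal order $\mathcal{M}\cong\Z[\zeta_p]$ has trivial class group, hence satisfies the conditions of Lemma~\ref{lem:equiv-locally-free-implies-free}, so Remark~\ref{rmk:I-trivial} allows one to take $i=1$ in Theorem~\ref{thm:deg-p-Minkowski}. Your write-up merely spells out in more detail why triviality of $\Cl(\Z[\zeta_p])$ yields the conditions of Lemma~\ref{lem:equiv-locally-free-implies-free}, which the paper leaves implicit.
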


\begin{proof}
In the proof of Theorem \ref{thm:deg-p-Minkowski}, adding the hypothesis that $p \leq 19$
implies that $\mathcal{M} \cong \Z[\zeta_{p}]$ has trivial class group and so
satisfies the equivalent conditions of Lemma ~\ref{lem:equiv-locally-free-implies-free}.
Hence we can take $i=1$ by Remark \ref{rmk:I-trivial}, and this implies the desired result.
\end{proof}

\begin{remark}
It is interesting to compare Theorem \ref{thm:deg-p-Minkowski} to (i) \cite[Theorem]{MR0244193}
when $K=\Q$ and (ii) \cite[Th\'eor\`em]{MR589107} when $K$ is imaginary quadratic.
Result (i) considers cyclic extensions $L/\Q$ of odd prime degree $p$ and gives sufficient conditions
on ideals of $\Z[\zeta_{p}]$ of norm equal to the class number $h_{L}$ of $\mathcal{O}_{L}$ 
for both the existence and non-existence of a strong Minkowski unit of $L/\Q$. 
The proof uses the fact that $\mathcal{O}_{L}^{\times}/\{\pm 1\}$ contains a free $\Z[\zeta_{p}]$-submodule of index $h_{L}$ generated by a cyclotomic unit. 
Result (ii) is analogous and uses elliptic units. By contrast, the proof and statement of Theorem \ref{thm:deg-p-Minkowski} do not depend on the particular extension $L/K$.
\end{remark}

\section{Application: rational points on abelian varieties}\label{sec:application-rational-points-on-abelian-varieties}

In this section, we consider examples of applications of the algebraic machinery of previous sections
to the Galois module structure of rational points of abelian varieties. By the Mordell--Weil theorem, 
for every abelian variety $A$ over a number field $K$, the group $A(K)/A(K)_{\tors}$ is a free $\Z$-module of finite rank. If $L/K$ is a Galois extension of number fields, then $A(L)/A(L)_{\tors}$ is a 
$\Z[\Gal(L/K)]$-lattice, so is amenable to study via our methods.

\begin{theorem}\label{thm:main-elliptic}
Let $G$ be a finite group and let $k$ be a positive integer.
Set $s=2$ if $G$ is abelian and $s=3$ otherwise.
Then there exists a positive integer $i$, which can be chosen to be coprime to $k$,
with the following property:
given any Galois extension of number fields $L/K$ with $\Gal(L/K)\cong G$,
and any abelian variety $A/K$ such that $\Q \otimes_{\Z} A(L)$ is cyclic as a $\Q[G]$-module, 
there exists $\varepsilon \in A(L)/A(L)_{\tors}$ such that
$[A(L)/A(L)_{\tors} : \Z[G] \cdot \varepsilon]$ is finite 
and divides $i \cdot [\mathcal{M} : \Z[G] ]^{s}$, 
where $\mathcal{M}$ is any maximal $\Z$-order in $\Q[G]$ containing $\Z[G]$. 
\end{theorem}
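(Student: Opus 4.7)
The plan is to reduce Theorem \ref{thm:main-elliptic} to a direct application of Theorem \ref{thm:gpringmain} (extended to the cyclic case as in Remark \ref{rmk:non-free}) with $n=1$, $\mathcal{O}=\Z$, $\Lambda = \Z[G]$.

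First I would set $X := A(L)/A(L)_{\tors}$. By the Mordell--Weil theorem, $A(L)$ is a finitely generated abelian group, so $X$ is a finitely generated torsion-free $\Z$-module, i.e.\ a $\Z$-lattice. Since $G=\Gal(L/K)$ acts on $A(L)$ preserving torsion, $X$ inherits a compatible $\Z[G]$-action, making it a $\Z[G]$-lattice. The hypothesis that $\Q \otimes_{\Z} A(L)$ is cyclic as a $\Q[G]$-module is exactly the statement that there is a surjection of $\Q[G]$-modules $\Q[G] \twoheadrightarrow \Q \otimes_{\Z} X$, which is the input required by Remark \ref{rmk:non-free}.

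I would then invoke the cyclic extension of Theorem \ref{thm:gpringmain} with $n=1$, taking the ideal $\mathcal{K}$ of the general statement to be $k\Z$ so as to enforce the coprimality of $i$ with $k$. Exactly as in the proof of Theorem \ref{thm:gpringmain}, choosing $J = \leftindex^{\mathcal{M}}{\Z[G]}$ via Proposition \ref{prop:index-in-two-orders} yields $[\mathcal{M}:J]\cdot[\mathcal{M}:\Z[G]] = [\mathcal{M}:\Z[G]]^{3}$ in the non-abelian case, while Theorem \ref{thm:frohlich-commutative} gives the bound $[\mathcal{M}:\Z[G]]^{2}$ in the abelian case; these are the exponents $s=3$ and $s=2$ respectively. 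The extension described in Remark \ref{rmk:non-free} then produces a $\Z[G]$-sublattice of $X$ generated by at most one element, which is precisely a cyclic submodule $\Z[G]\cdot\varepsilon$ for some $\varepsilon \in X$, whose index in $X$ divides $i\cdot[\mathcal{M}:\Z[G]]^{s}$. Re-interpreting $\varepsilon$ as an element of $A(L)/A(L)_{\tors}$ gives the claimed conclusion.

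The main (and only) point that requires attention is unpacking Remark \ref{rmk:non-free} as it applies here: one fixes an $A$-module $B$ with $\Q[G] \cong \Q X \oplus B$ (which exists because $\Q X$ is cyclic, hence $\Q[G]$-projective on account of $|G|$ being invertible in $\Q$ and $\Q[G]$ being semisimple), chooses any full $\Z[G]$-lattice $W \subset B$, applies Theorem \ref{thm:gpringmain} to $X\oplus W$ to obtain a free rank-one $\Z[G]$-sublattice $Z' = \Z[G]\cdot(\varepsilon,\omega)$ of the prescribed index, and then projects onto $X$. The finiteness of $[X:\Z[G]\cdot\varepsilon]$ and the divisibility by the same bound follow because $\Q Z' \twoheadrightarrow \Q X$ is surjective by construction. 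Beyond this routine bookkeeping, no arithmetic input specific to abelian varieties is needed: all the heavy lifting has already been done in the preceding sections, and the Mordell--Weil theorem is used only to establish that $X$ is a $\Z[G]$-lattice in the first place.
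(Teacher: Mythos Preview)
Your proposal is correct and follows essentially the same approach as the paper: invoke Theorem \ref{thm:gpringmain} with $n=1$, $\mathcal{O}=\Z$, $\mathcal{K}=k\Z$ to obtain the integer $i$, then extend to the case of cyclic $\Q[G]$-modules via the projection trick of Remark \ref{rmk:non-free}, and finally specialise to $X=A(L)/A(L)_{\tors}$. The paper's proof is simply terser, citing Remark \ref{rmk:non-free} without unpacking the complement-and-project argument that you spell out.
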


\begin{remark}
An explicit formula for $[\mathcal{M} : \Z[G]]$ is given in Corollary \ref{cor:index-formula}. In particular, a weak but general bound is that $[\mathcal{M} : \Z[G]]^{s}$ divides $|G|^{\lceil s|G|/2 \rceil }$.
\end{remark}

\begin{remark}\label{r:artinsinductiontheorem}
Let $G$ be a finite group.
The isomorphism class of a finite-dimensional $\Q[G]$-module $V$ is entirely determined by the values of $\dim_{\Q} V^{H}$ as $H$ runs over a set of representatives of the set of cyclic subgroups of $G$ up to conjugacy (see \cite[\S 13.1, Corollary to Theorem 30\textprime]{MR0450380}).
In particular, $V$ is free of rank $1$ if and only if $\dim_{\Q} V^{H} = [G:H]$ for all cyclic subgroups
$H$ of $G$ up to conjugacy.
\end{remark}

\begin{proof}[Proof of Theorem \ref{thm:main-elliptic}]
By Theorem \ref{thm:gpringmain}, there exists a positive integer $i$,
which can be chosen to be coprime to $n$,
with the following property: 
given any $\Z[G]$-lattice $X$ such that $\Q \otimes_{\Z} X$ is free of rank $1$
as a $\Q[G]$-module, 
there exists a free $\Z[G]$-sublattice $Y$ of $X$ such that $[X : Y]$ divides 
$i \cdot [\mathcal{M} : \Z[G]]^{s}$. 
By Remark \ref{rmk:non-free}, $i$ also has the property that 
given any $\Z[G]$-lattice $X$ such that $\Q \otimes_{\Z} X$ is cyclic
as a $\Q[G]$-module, 
there exists a cyclic $\Z[G]$-sublattice $Y$ of $X$ such that $[X : Y]$ divides 
$i \cdot [\mathcal{M} : \Z[G]]^{s}$. 
In particular, this holds for $X=A(L)/A(L)_{\tors}$ after fixing an isomorphism $G \cong \Gal(L/K)$.
\end{proof}

\begin{theorem}\label{t:abelianvarcyclicp}
Let $p$ be an odd prime and let $k$ be a positive integer.
Then there exists a positive integer $i$, which can be chosen 
to be coprime to $k$, with the following property: 
given any cyclic extension $L/K$ of number fields with $[L:K]=p$
and any abelian variety $A/K$ such that $\rank_{\Z}A(K)=0$ and $\rank_{\Z}A(L)=p-1$, 
there exists $\varepsilon \in A(L)/A(L)_{\tors}$ such that
$[A(L)/A(L)_{\tors} : \Z[\Gal(L/K)] \cdot \varepsilon]$ is finite and divides $i$.
\end{theorem}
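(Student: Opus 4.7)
The plan is to apply Corollary \ref{cor:free-sublattice-of-lf-max-case} to $X := A(L)/A(L)_{\tors}$ viewed as a lattice over the maximal $\Z$-order $\mathcal{M} := \Z[\zeta_{p}]$ in the field $\Q(\zeta_{p})$. First, I would fix an abstract cyclic group $G$ of order $p$ and an isomorphism $G \cong \Gal(L/K)$, so that $X$ becomes a $\Z[G]$-lattice of $\Z$-rank $p-1$. The key arithmetic observation is that $\Tr_{G} := \sum_{g \in G} g$ annihilates $X$. Indeed, $\Tr_{G}(A(L)) \subseteq A(L)^{G}$, and since $A(L)^{G}$ contains $A(K)$ as a subgroup of finite index, we have $\rank_{\Z} A(L)^{G} = \rank_{\Z} A(K) = 0$; therefore $A(L)^{G}$ is torsion, so $\Tr_{G}$ maps $A(L)$ into $A(L)_{\tors}$ and thus induces the zero map on $X$. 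Consequently, $X$ is naturally a $\Z[G]/(\Tr_{G})$-lattice.

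Second, I would use the identification $\Z[G]/(\Tr_{G}) \cong \Z[\zeta_{p}] = \mathcal{M}$, which is the ring of integers of $\Q(\zeta_{p})$ and hence a maximal $\Z$-order in this field. Since $\Q X$ has $\Q$-dimension $p-1 = \dim_{\Q} \Q(\zeta_{p})$, it is free of rank $1$ over $\Q(\zeta_{p})$. Applying Corollary \ref{cor:free-sublattice-of-lf-max-case} with $\mathcal{O}=\Z$, $A = \Q(\zeta_{p})$, this maximal order $\mathcal{M}$, and $\mathcal{K} = k\Z$ yields a positive integer $i$, coprime to $k$ and depending only on $\mathcal{M}$ (equivalently, only on $p$) and $k$, such that our $X$ contains a free $\mathcal{M}$-sublattice $\mathcal{M}\varepsilon$ with $[X : \mathcal{M}\varepsilon]$ dividing $i$. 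Finally, since $\Tr_{G}$ kills $X$, we have $\mathcal{M}\varepsilon = \Z[G]\varepsilon$ as subgroups of $X$, which produces the desired $\varepsilon$.

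The only step requiring genuine verification is the vanishing of $\Tr_{G}$ on $X$, i.e.\ the translation from the arithmetic hypotheses $\rank_{\Z} A(K) = 0$ and $\rank_{\Z} A(L) = p-1$ into the structural statement that $X$ is an $\mathcal{M}$-lattice whose rationalisation is free of rank $1$ over $\Q(\zeta_{p})$. Once this is in place, the theorem reduces immediately to Corollary \ref{cor:free-sublattice-of-lf-max-case}, whose substantive content is the Jordan--Zassenhaus theorem combined with Roiter's lemma; no further bound-tracking is required, and in particular no appeal to Theorem \ref{thm:gpringmain} or its group-ring refinements is needed here because $\mathcal{M}$ is already maximal.
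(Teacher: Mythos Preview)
Your proposal is correct and follows essentially the same approach as the paper: identify $\Z[G]/(\Tr_{G})$ with the maximal order $\Z[\zeta_{p}]$, verify that $X=A(L)/A(L)_{\tors}$ is an $\mathcal{M}$-lattice whose rationalisation is free of rank $1$, and then apply Corollary~\ref{cor:free-sublattice-of-lf-max-case}. The only cosmetic difference is that the paper deduces $\Q X \cong \Q[G]/(\Tr_{G})$ by invoking Remark~\ref{r:artinsinductiontheorem} (checking $\dim_{\Q}(\Q X)^{H}$ for cyclic $H\leq G$), whereas you argue directly that $\Tr_{G}$ kills $X$ via $A(L)^{G}$ being torsion; both verifications amount to the same thing for $G$ cyclic of prime order.
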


\begin{proof} 
Let $G$ be the cyclic group of order $p$ and let $\mathcal{M} = \Z[G] / (\Tr_{G})$. 
Then $\mathcal{M} \cong \Z[\zeta_{p}]$, which is a maximal $\Z$-order.
By Corollary \ref{cor:free-sublattice-of-lf-max-case} there exists a positive integer $i$,
which can be chosen to be coprime to $n$,
with the following property: 
given any $\mathcal{M}$-lattice $X$ such that $\Q X \cong  \Q[G] / (\Tr_{G})$
as $\Q[G] / (\Tr_{G})$-modules, 
there exists a free $\mathcal{M}$-sublattice $Y$ of $X$ such that $[X : Y]$ divides $i$.
After fixing an isomorphism $G \cong \Gal(L/K)$,
the desired result now follows since the rank hypotheses ensure that 
$\Q \otimes_{\Z} (A(L)/A(L)_{\tors}) \cong \Q[G]/(\Tr_{G})$ as $\Q[G]$-modules 
(see Remark \ref{r:artinsinductiontheorem}) and hence as $\Q[G]/(\Tr_{G})$-modules.
\end{proof}

\begin{remark}
Note that for $\Q \otimes_{\Z}(A(L)/A(L)_{\tors})$ to be cyclic as a ${\Q}[G]/(\Tr_{G})$-module, it is necessary that $\rank_{\Z} A(L)=0$ or $p-1$.
\end{remark}

\begin{corollary}
Let $p$ be a prime such that $3 \leq p \leq 19$. 
Then given any cyclic extension of number fields $L/K$ with $[L:K]=p$ and any abelian variety $A/K$
such that $\rank_{\Z}A(K)=0$ and $\rank_{\Z}A(L)=p-1$, 
there exists $\varepsilon \in A(L)/A(L)_{\tors}$ such that
$A(L)/A(L)_{\mathrm{tors}} = \Z[\Gal(L/K)] \cdot \varepsilon$. 
\end{corollary}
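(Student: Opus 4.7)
The plan is to follow the proof of Theorem \ref{t:abelianvarcyclicp} with the additional observation that the extra hypothesis $p \leq 19$ allows us to take $i=1$. Let $G$ be the cyclic group of order $p$ and consider the $\Z$-order $\mathcal{M} = \Z[G]/(\Tr_G)$. A direct calculation (or the well-known fact that $\Z[\zeta_p]$ is the maximal order of $\Q(\zeta_p)$) shows that $\mathcal{M} \cong \Z[\zeta_p]$, which is a maximal $\Z$-order.

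First I would verify that for $3 \leq p \leq 19$, the class group $\Cl(\Z[\zeta_p])$ is trivial: this is a classical result and can also be read off the set $\Sigma$ in Corollary \ref{cor:trivialclassgroup-abelian-case}, which contains all primes $p$ with $p \leq 19$. Since $\mathcal{M}$ is commutative, it automatically satisfies the Eichler condition and hence has the locally free cancellation property; combined with $\Cl(\mathcal{M})$ being trivial, Lemma \ref{lem:equiv-locally-free-implies-free} gives that every locally free $\mathcal{M}$-lattice is in fact free.

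Next I would invoke Corollary \ref{cor:free-sublattice-of-lf-max-case} applied to this $\mathcal{M}$: by Remark \ref{rmk:I-trivial}, we may take $\mathcal{I}=\Z$, i.e. $i=1$. Thus for any $\mathcal{M}$-lattice $X$ with $\Q X \cong \Q[G]/(\Tr_G)$ as $\Q[G]/(\Tr_G)$-modules, $X$ itself is free of rank one over $\mathcal{M}$, so there exists $\varepsilon \in X$ with $X = \mathcal{M} \cdot \varepsilon = \Z[G] \cdot \varepsilon$.

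Finally I would apply this to $X = A(L)/A(L)_{\mathrm{tors}}$, after fixing an isomorphism $G \cong \Gal(L/K)$. The rank hypotheses $\rank_{\Z} A(K) = 0$ and $\rank_{\Z} A(L) = p-1$ ensure via Remark \ref{r:artinsinductiontheorem} that $\Q \otimes_{\Z}(A(L)/A(L)_{\mathrm{tors}}) \cong \Q[G]/(\Tr_G)$ as $\Q[G]$-modules, and hence as $\Q[G]/(\Tr_G)$-modules. The statement then follows immediately. There is no real obstacle here beyond correctly citing the previous results; the only substantive input is the class number fact, which is standard.
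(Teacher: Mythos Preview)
Your proposal is correct and follows essentially the same approach as the paper: both observe that the extra hypothesis $p\leq 19$ forces $\mathcal{M}\cong\Z[\zeta_p]$ to have trivial class group, hence to satisfy the equivalent conditions of Lemma~\ref{lem:equiv-locally-free-implies-free}, so that Remark~\ref{rmk:I-trivial} allows $i=1$ in the proof of Theorem~\ref{t:abelianvarcyclicp}. You spell out a few more details (the Eichler condition, the reference to the set $\Sigma$), but the argument is the same.
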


\begin{proof}
In the proof of Theorem \ref{t:abelianvarcyclicp}, the additional hypothesis that $p \leq 19$
implies that $\mathcal{M} \cong \Z[\zeta_{p}]$ has trivial class group and so
satisfies the equivalent conditions of Lemma ~\ref{lem:equiv-locally-free-implies-free}.
Hence we can take $i=1$ by Remark \ref{rmk:I-trivial}, and this implies the desired result.
\end{proof}

\begin{theorem}\label{thm:complex-mult}
Let $p$ be a prime, let $F$ be an imaginary quadratic field with discriminant coprime to $p$,
and let $\mathcal{K}$ be a nonzero ideal of $\mathcal{O}_{F}$. 
Then there exists an ideal $\mathcal{I}$ of $\mathcal{O}_{F}$, which can be chosen to be coprime
to $\mathcal{K}$, with the following property: 
given any cyclic extension of number fields $L/K$ with $[L:K]=p$ and such that $K$ contains $F$, 
and any elliptic curve $E/K$ with complex multiplication by $\mathcal{O}_{F}$ and with $\rank_{\Z} E(K)=0$
and $\rank_{\Z} E(L)=2(p-1)$, there exists 
$\varepsilon \in E(L)/E(L)_{\tors}$ such that $[E(L)/E(L)_{\tors} : \mathcal{O}_{F}[\Gal(L/K)] \cdot \varepsilon ]_{\mathcal{O}_{F}}$ divides $\mathcal{I}$.
\end{theorem}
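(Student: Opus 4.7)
The plan is to apply Corollary \ref{cor:free-sublattice-of-lf-max-case} in the setting $\mathcal{O} = \mathcal{O}_F$, with base field $F$, $A = F[G]/(\Tr_{G})$ for $G = \Gal(L/K)$, and maximal order $\mathcal{M} = \mathcal{O}_F[G]/(\Tr_{G})$. To do this I must check three things: that $\mathcal{O}_F[G]/(\Tr_{G})$ really is a maximal $\mathcal{O}_F$-order in $F[G]/(\Tr_{G})$; that $X := E(L)/E(L)_{\tors}$ carries the structure of a $\mathcal{O}_F[G]/(\Tr_{G})$-lattice; and that $F \otimes_{\mathcal{O}_F} X$ is free of rank $1$ over $F[G]/(\Tr_{G})$.

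First I would identify the algebra and order. Sending a generator of the cyclic group $G$ to $\zeta_p$ gives an isomorphism $\Q[G]/(\Tr_{G}) \cong \Q(\zeta_p)$, and hence $F[G]/(\Tr_{G}) \cong F \otimes_{\Q} \Q(\zeta_p)$. The discriminant of $\Q(\zeta_p)/\Q$ is a power of $p$ (for $p$ odd it is $\pm p^{p-2}$; for $p=2$ it is trivial), so the hypothesis that $\mathrm{disc}(F)$ is coprime to $p$ makes $F$ and $\Q(\zeta_p)$ linearly disjoint with coprime discriminants. The standard consequence is that $F(\zeta_p)$ is a field with ring of integers $\mathcal{O}_F[\zeta_p]$; thus $A \cong F(\zeta_p)$ is a field and $\mathcal{M} \cong \mathcal{O}_{F(\zeta_p)}$ is a maximal $\mathcal{O}_F$-order.

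Next I would verify the module-theoretic hypotheses on $X$. The CM embedding $\mathcal{O}_F \hookrightarrow \End_K(E)$ commutes with the Galois action, so $E(L)$, and therefore $X$, is an $\mathcal{O}_F[G]$-lattice (and $\mathcal{O}_F$-torsion-free since any $\mathcal{O}_F$-torsion element would be killed by a nonzero rational integer norm). The hypothesis $\rank_{\Z}E(K) = 0$ forces $(\Q \otimes_{\Z} X)^G = 0$; since $\Tr_{G}$ sends $\Q \otimes_{\Z} X$ into its $G$-invariants, $\Tr_{G}$ annihilates $\Q \otimes_{\Z} X$, and hence annihilates $X$. Thus $X$ is a $\mathcal{M}$-lattice. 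Because $\dim_\Q(\Q \otimes_\Z X) = 2(p-1)$ and $[F:\Q] = 2$, we get $\dim_F(F \otimes_{\mathcal{O}_F} X) = p-1 = [F(\zeta_p):F]$, so the $A$-module $F \otimes_{\mathcal{O}_F} X$ is a one-dimensional vector space over the field $A = F(\zeta_p)$ and is in particular free of rank $1$ over $A$.

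Corollary \ref{cor:free-sublattice-of-lf-max-case} then produces an ideal $\mathcal{I}$ of $\mathcal{O}_F$ coprime to $\mathcal{K}$ (depending only on $G$, $F$ and $\mathcal{K}$, not on $L$ or $E$) and a free $\mathcal{M}$-sublattice $\mathcal{M} \cdot \varepsilon$ of $X$ with $[X : \mathcal{M}\cdot\varepsilon]_{\mathcal{O}_F}$ dividing $\mathcal{I}$. Since $\Tr_{G}$ already acts as zero on $\varepsilon$, we have $\mathcal{M} \cdot \varepsilon = \mathcal{O}_F[G] \cdot \varepsilon$, which gives the claimed bound. The main obstacle is the identification of $\mathcal{M}$ as the full maximal order; everything else is bookkeeping around Corollary \ref{cor:free-sublattice-of-lf-max-case} and the rank calculation. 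In particular the coprimality hypothesis on $\mathrm{disc}(F)$ is there precisely so that no non-maximality defect at primes above $p$ contaminates the final bound, allowing us to avoid the $[\mathcal{M}:\Lambda]_{\mathcal{O}}^n$ and $[\mathcal{M}:J]_{\mathcal{O}}^n$ factors from the more general Theorem \ref{thm:main}.
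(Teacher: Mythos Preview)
Your proposal is correct and follows essentially the same route as the paper: identify $\mathcal{O}_F[G]/(\Tr_G) \cong \mathcal{O}_{F(\zeta_p)}$ as a maximal $\mathcal{O}_F$-order via the coprime-discriminant hypothesis, verify that $E(L)/E(L)_{\tors}$ is an $\mathcal{O}_F[G]/(\Tr_G)$-lattice whose $F$-span is free of rank $1$, and apply Corollary~\ref{cor:free-sublattice-of-lf-max-case}. Your write-up is a bit more explicit about why $\Tr_G$ annihilates $X$ and why $\mathcal{I}$ is uniform in $L$ and $E$, but the argument is the same as the paper's.
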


\begin{proof}
Let $G$ be the cyclic group of order $p$. 
The discriminant of $\Z[\zeta_{p}]$ is a power of $p$ and in particular is coprime to the discriminant
of $\mathcal{O}_{F}$. 
Hence $\Q(\zeta_{p})$ and $F$ are linearly disjoint over $\Q$.
Moreover, by \cite[III, \S 3, Proposition 17]{MR1282723} we have 
$\mathcal{O}_{F}[\zeta_{p}] = \mathcal{O}_{F(\zeta_{p})}$.
Thus $\mathcal{O}_{F}[G]/(\Tr_{G})\cong \mathcal{O}_{F}[\zeta_{p}]$ is a maximal 
$\mathcal{O}_{F}$-order.
By Corollary \ref{cor:free-sublattice-of-lf-max-case} there exists a nonzero ideal $\mathcal{I}$ of 
$\mathcal{O}_{F}$, which can be chosen to be coprime to $\mathcal{K}$, with the following property: 
given any $\mathcal{O}_{F}[G]/(\Tr_{G})$-lattice $X$ such that $F \otimes_{\mathcal{O}_{F}} X \cong  F[G] / (\Tr_{G})$
as $F[G] / (\Tr_{G})$-modules, 
there exists a free $\OO_{F}[G]/(\Tr_{G})$-sublattice $Y$ of $X$ such that $[X : Y]_{\mathcal{O}_{F}}$ divides $\mathcal{I}$.

Let $E$, $L$ and $K$ be as in the theorem and fix an isomorphism $G \cong \Gal(L/K)$.
By assumption, $E$ has CM by $\OO_F$ defined over $K$. 
The commuting Galois action and action by endomorphisms then give $E(L)/E(L)_\tors$ the structure of an $\OO_F[G]$-lattice. Moreover, as $\rk_\z E(K)=0$, it is in fact a $\OO_F[G]/(\Tr_G)$-lattice and since $\rank_{\z} E(L)=2(p-1)$, we have that $\dim_{\q} F\ot _{\OO_{F}}(E(L)/E(L)_\tors)=2(p-1)$. Since $F$ and $\q(\zeta_{p})$ are linearly disjoint, the unique $F[G]/(\Tr_{G})$-module with these properties is $F[G]/(\Tr_{G})$ itself. Therefore $E(L)/E(L)_\tors$ is an example of an $\OO_F[G]/(\Tr_G)$-lattice
such that $F \otimes_{\mathcal{O}_{F}} X \cong  F[G] / (\Tr_{G})$.
\end{proof}

\begin{remark}
Since $\Q(\zeta_{p})$ and $F$ are linearly disjoint over $\Q$, 
the ${F}[G]/(\Tr_{G})$-module $F\otimes_{\mathcal{O}_{F}}(E(L)/E(L)_{\tors})$ is cyclic
if and only if either $\rank_{\Z} E(L)=0$ or $2(p-1)$.
\end{remark}

\begin{corollary}
Let $p$ be a prime, let $F$ be an imaginary quadratic field with discriminant coprime to $p$ such that $\mathcal{O}_{F(\zeta_{p})}$ has trivial class group.
Then for every cyclic extension of number fields $L/K$ such
that $K$ contains $F$ and $[L:K]=p$, and for every elliptic curve $E/K$ with complex multiplication by $\OO$ and with $\rank_{\Z} E(K)=0$ and $\rank_{\Z} E(L)= 2(p-1)$, we have that $E(L)/E(L)_{\tors}$ is free as an $\OO_{F}[G]/(\Tr_{G})$-module.
\end{corollary}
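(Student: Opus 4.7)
The plan is to imitate verbatim the proof of Theorem \ref{thm:complex-mult}, using the extra class-group hypothesis to replace the bounding ideal $\mathcal{I}$ there by the unit ideal $\mathcal{O}_{F}$, which will force the free sublattice produced to coincide with $E(L)/E(L)_{\tors}$ itself.

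First I would recall from the proof of Theorem \ref{thm:complex-mult} that the discriminant hypothesis forces linear disjointness of $F$ and $\Q(\zeta_{p})$ over $\Q$ and yields a ring identification
\[
\mathcal{O}_{F}[G]/(\Tr_{G}) \;\cong\; \mathcal{O}_{F}[\zeta_{p}] \;=\; \mathcal{O}_{F(\zeta_{p})},
\]
which is a maximal $\mathcal{O}_{F}$-order in $F[G]/(\Tr_{G}) \cong F(\zeta_{p})$. The same rank and complex multiplication argument used there shows that $X := E(L)/E(L)_{\tors}$ is naturally an $\mathcal{O}_{F}[G]/(\Tr_{G})$-lattice with $F \otimes_{\mathcal{O}_{F}} X \cong F[G]/(\Tr_{G})$ as $F[G]/(\Tr_{G})$-modules.

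Next I would verify that the maximal order $\mathcal{O}_{F(\zeta_{p})}$ satisfies the equivalent conditions of Lemma \ref{lem:equiv-locally-free-implies-free}. Since $F(\zeta_{p})$ is commutative, the algebra $F[G]/(\Tr_{G}) \cong F(\zeta_{p})$ satisfies the Eichler condition relative to $\mathcal{O}_{F}$ (see \S \ref{subsec:lfcg-sfc}), so $\mathcal{O}_{F(\zeta_{p})}$ has the locally free cancellation property. Moreover, for a commutative Dedekind domain the locally free class group in the sense of \S \ref{subsec:lfcg-sfc} coincides with the ordinary ideal class group, which is trivial by hypothesis. Hence condition (iv) of Lemma \ref{lem:equiv-locally-free-implies-free} is satisfied.

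Finally, by Remark \ref{rmk:I-trivial} we may take $\mathcal{I} = \mathcal{O}_{F}$ in the application of Corollary \ref{cor:free-sublattice-of-lf-max-case} inside the proof of Theorem \ref{thm:complex-mult}. The free $\mathcal{O}_{F}[G]/(\Tr_{G})$-sublattice $Y$ of $X$ produced there then satisfies the relation that $[X : Y]_{\mathcal{O}_{F}}$ divides $\mathcal{O}_{F}$, which forces $Y = X$ and gives the claimed freeness of $E(L)/E(L)_{\tors}$. I do not anticipate any real obstacle beyond bookkeeping; the only input essentially new to the proof of Theorem \ref{thm:complex-mult} is the observation that triviality of $\Cl(\mathcal{O}_{F(\zeta_{p})})$ together with commutativity gives condition (iv) of Lemma \ref{lem:equiv-locally-free-implies-free}, and this is immediate from the discussion in \S \ref{subsec:lfcg-sfc}.
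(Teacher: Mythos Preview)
Your proposal is correct and follows essentially the same approach as the paper's own proof: both argue that the trivial class group hypothesis on $\mathcal{O}_{F(\zeta_{p})}$ ensures that $\mathcal{O}_{F}[G]/(\Tr_{G}) \cong \mathcal{O}_{F(\zeta_{p})}$ satisfies the equivalent conditions of Lemma~\ref{lem:equiv-locally-free-implies-free}, and then invoke Remark~\ref{rmk:I-trivial} to take $\mathcal{I}=\mathcal{O}_{F}$ in the proof of Theorem~\ref{thm:complex-mult}. Your version is simply a bit more explicit about why commutativity plus trivial class group gives condition~(iv) of that lemma.
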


\begin{proof}
In the proof of Theorem \ref{thm:complex-mult}, the additional hypothesis that
$\mathcal{O}_{F(\zeta_{p})}$ has trivial class group ensures that 
$\mathcal{O}_{F}/(\Tr_{G}) \cong \mathcal{O}_{F(\zeta_{p})}$ satisfies the equivalent conditions of Lemma \ref{lem:equiv-locally-free-implies-free}. 
Hence we can take  $\mathcal{I}=\mathcal{O}_{F}$ by Remark \ref{rmk:I-trivial}, and this implies the desired result.
\end{proof}

\bibliography{Minkowski-approximation-bib}
\bibliographystyle{amsalpha}

\end{document}